\documentclass[11pt]{amsart}

\usepackage{amsmath,amsthm,amssymb}
\usepackage[margin=1in]{geometry}
\usepackage{hyperref}
\usepackage{enumitem}
\usepackage{tikz}
\usepackage{booktabs}

\theoremstyle{plain}
\newtheorem{theorem}{Theorem}[section]
\newtheorem{lemma}[theorem]{Lemma}
\newtheorem{proposition}[theorem]{Proposition}
\newtheorem{corollary}[theorem]{Corollary}

\theoremstyle{definition}
\newtheorem{definition}[theorem]{Definition}
\newtheorem{remark}[theorem]{Remark}
\newtheorem{fact}[theorem]{Fact}
\newtheorem{conjecture}[theorem]{Conjecture}

\newcommand{\Z}{\mathbb{Z}}
\newcommand{\N}{\mathbb{N}}

\begin{document}

\title[Semiprime and sphenic Egyptian fractions]
{Every natural number is a sum of distinct semiprime unit fractions}

\author{Shisheng Li}
\email{shisheng@mail.ustc.edu.cn}
\date{June 2026}

\begin{abstract}
We prove that every natural number is a finite sum of distinct unit fractions
whose denominators are \emph{semiprimes} (products of two distinct primes).  This
is the $\omega=2$ integer case of a problem of Erd\H{o}s and Graham, stated only as a
conjecture by Butler, Erd\H{o}s and Graham (\emph{Integers} \textbf{15} (2015),
A51), who proved the $\omega=3$ analogue.  Counterintuitively the problem hardens
as $\omega$ decreases --- the induction's feed thins --- so $\omega=2$ is the hard
case; our proof adapts the Butler--Erd\H{o}s--Graham induction to this thin-feed
regime, where the entire content of the induction step reduces to an explicit
onset inequality $Y_0(N)\le\min\{\beta(N),\beta'(N)\}$, proved for all $N\ge10$ by
Olson's addition theorem and elementary Chebyshev bounds above a finite,
machine-checked base range.  The same engine extends to the rationals:
for every squarefree $b$, every $a/b$ above an explicit threshold
$\min\{B_{N_b}/6,\,\tfrac15\}$ is $\omega=2$ representable, unconditionally.  As an
application we give the first complete proof of the rational $\omega=3$ statement
--- every $a/b$ with squarefree $b$ is a sum of distinct \emph{sphenic} unit
fractions --- that Butler, Erd\H{o}s and Graham conjectured but left unpublished; a
descent settles every $\omega\ge3$.  What remains open is the $\omega=2$ regime below this
threshold, which we reduce to a single explicit conjecture --- that the gap-free
floor of a semiprime subset-sum set tends to zero.
This work is a human--AI collaboration: AI tools (notably Anthropic's Claude,
used through Claude Code) contributed substantially to the Lean formalisation, the
experiments, and the writing; correspondingly, every result is machine-checked in
Lean~4 / Mathlib (no \texttt{sorry}; two cited classical axioms, plus the
\texttt{native\_decide} compiler-trust base for the finite computations), so its
correctness is independent of the tools used.
\end{abstract}

\maketitle

\section{Introduction}

\subsection{Egyptian fractions with restricted denominators}
An \emph{Egyptian fraction} representation of a positive rational $a/b$ is an
expression
\[
\frac{a}{b}=\frac{1}{n_1}+\frac{1}{n_2}+\cdots+\frac{1}{n_k},
\qquad n_1<n_2<\cdots<n_k,
\]
as a sum of \emph{distinct} unit fractions.  Such representations always exist
(the greedy algorithm of Fibonacci produces one), and a rich vein of problems
arises when one constrains the denominators $n_i$.  Erd\H{o}s and Graham
\cite{ErdosGraham1980} asked, for a squarefree denominator $b$, whether every
$a/b$ admits an Egyptian-fraction representation in which each $n_i$ is a product
of exactly $\omega$ distinct primes.  This is recorded as
problem~\#306 in Bloom's \texttt{erdosproblems.com}~\cite{Bloom306} and as~D11 in
Guy's \emph{Unsolved Problems in Number Theory}~\cite{Guy}.

For $\omega=3$, Butler, Erd\H{o}s and Graham \cite{BEG2015} (henceforth
\emph{BEG15}) proved the integer case in full:

\begin{theorem}[BEG15, Theorem 1]
Any natural number can be written as an Egyptian fraction in which each
denominator is the product of three distinct primes.
\end{theorem}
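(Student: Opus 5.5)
We follow the Butler--Erd\H{o}s--Graham strategy: an induction on a growing pool of admissible denominators, combined with a subset-sum density argument. Let $p_1<p_2<\cdots$ be the primes and let $S_N$ be the set of sphenic numbers $p_ip_jp_k$ with $i<j<k\le N$. Let
\[
B_N=\sum_{n\in S_N}\frac1n,\qquad
\mathcal{P}(S_N)=\Bigl\{\sum_{n\in T}\frac1n : T\subseteq S_N\Bigr\}.
\]
Since $\sum_{p}1/p$ diverges, $B_N\sim\frac16\bigl(\sum_{i\le N}1/p_i\bigr)^3\to\infty$. It therefore suffices to prove the following gap statement. There exist $N_0$ and a function $\delta(N)\to0$ such that for every $N\ge N_0$, every real $x\in[\delta(N),\,B_N-\delta(N)]$ lies within $\delta(N)$ of $\mathcal{P}(S_N)$. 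Moreover, every rational in this range with denominator dividing $L_N=\prod_{i\le N}p_i$ is hit exactly. Applying this with $x=m$ and $N$ large enough that $B_N>m+1$ gives the integer $m$ as a sum of distinct sphenic unit fractions.

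The exact-hitting statement is proved by induction on $N$, passing from $S_{N}$ to $S_{N+1}$. The new denominators are $p_{N+1}p_ip_j$ for $i<j\le N$. The key step is the inductive claim that every multiple of $1/L_N$ in a central window is a subset sum of $S_N$. Writing a target $a/L_{N+1}$, we first choose a subset $U$ of the new fractions $\{1/(p_{N+1}p_ip_j)\}$ to fix the residue of the numerator modulo $p_{N+1}$. Clearing denominators, these fractions contribute residues $(L_N/(p_ip_j))\bmod p_{N+1}$, and there are $\binom N2\gg p_{N+1}$ of them. We expect many distinct nonzero residues among them, so the Erd\H{o}s--Heilbronn / Cauchy--Davenport theorem, or Olson's theorem on zero-sums, shows that the subset sums of these residues cover all of $\Z/p_{N+1}\Z$, even using only a bounded-size sub-collection of small total value. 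After removing $U$, the remainder $x-\sum_{U}$ has denominator dividing $L_N$ and still lies in the inductive window, so the inductive hypothesis finishes the step. The window grows because the unused new fractions and the increase of $B_{N+1}$ over $B_N$ enlarge it.

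The quantitative bookkeeping is where the real work lies, and it is the main obstacle. We must show that the residue-correcting set $U$ can be chosen with total sum at most a small $\varepsilon_N$. At the same time, the inductive window $[\varepsilon_N,B_N-\varepsilon_N]$ must expand, i.e.\ $B_{N+1}-B_N=\frac1{p_{N+1}}\sum_{i<j\le N}\frac1{p_ip_j}$ must exceed the loss $2\varepsilon_{N+1}$. Since $\sum_{i<j}1/(p_ip_j)\asymp(\log\log N)^2$ while $1/p_{N+1}\approx1/(N\log N)$, this is a tight but favourable race. We would control the size of $U$ by restricting to pairs $i<j$ with $p_ip_j$ large, which makes each new fraction small. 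We would then use Chebyshev-type bounds to verify that enough distinct residues remain: at least $\sqrt{4p_{N+1}}$ of them, which suffices for Olson. The inequality is checked explicitly for $N$ beyond some modest bound, and a finite computation supplies the base case, namely that the subset sums of $S_{N_0}$ fill the initial window.
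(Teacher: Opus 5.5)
\textbf{The gap is in how you account for the window, and it leaves your headline reduction ($\delta(N)\to0$) unsupported.} Below, $B_N$ is your sphenic sum, which the paper calls $B_3(N)$.

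In your step, a target $x$ at level $N+1$ is certified only when $x-\sum_U$ lies in the level-$N$ window $[\varepsilon_N,B_N-\varepsilon_N]$. Since $\sum_U\ge0$, no $x<\varepsilon_N$ is ever certified. The guaranteed floor at level $N+1$ is therefore $\varepsilon_N+u_N$, where $u_N$ bounds $\sum_U$; it is not the one-step cost. So the floor your induction produces is nondecreasing in $N$ and cannot tend to $0$.

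Your ``race'' $B_{N+1}-B_N>2\varepsilon_{N+1}$ conflates the floor with the per-step cost; at best it controls the width of the window. A gap-free floor tending to $0$ is the end-of-interval behaviour BEG15 say is not understood, the $\omega=3$ counterpart of the paper's open Conjecture~\ref{conj:star}. Nothing in your sketch proves it.

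The top of the window is also not argued: ``the unused new fractions enlarge it'' does not show that targets near $B_{N+1}-\varepsilon_{N+1}$ are hit. Finally, your last step (take $x=m$ at one $N$ with $B_N>m+1$) silently needs the floor at that $N$ to be at most $1$.

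\textbf{The argument is repairable without $\delta(N)\to0$.}

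Carry $\varepsilon_{N+1}=\varepsilon_N+u_N$. Treat only targets $x\le\tfrac12B_{N+1}$ and get the rest by complementation $x\mapsto B_{N+1}-x$. This needs the side condition $\varepsilon_N\le B_N-\tfrac12B_{N+1}$; it is exactly the paper's floor recurrence, Lemma~\ref{lem:floorrec}, run one level up.

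Make $U$ cheap concretely. For fixed $j=N$ the residues $L_N/(p_ip_N)\equiv L_N\,p_N^{-1}p_i^{-1}\pmod{p_{N+1}}$ are distinct and nonzero. Hence, for large $N$, the roughly $2\sqrt{p_{N+1}}$ pairs $\{i,N\}$ with $i$ just below $N$ meet the hypothesis of Olson's addition theorem (Theorem~\ref{thm:olson}). That is a subset-sum covering theorem, not a zero-sum theorem. Plain Cauchy--Davenport, which gives only $|P(X)|\ge\min(q,|X|+1)$, is too weak to keep $U$ cheap. This yields $u_N=O(p_{N+1}^{-5/2})$ for large $N$.

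You can then finish in either of two ways:
\begin{itemize}
\item Verify $\varepsilon_{N_0}+\sum_N u_N<1$, which needs a base computation plus finitely many small steps.
\item Argue as BEG15 do. Their window is the proportional block $[\tfrac16\sigma^3(N),\tfrac56\sigma^3(N)]$, and consecutive blocks overlap because $B_3(N+1)\le5B_3(N)$. The union $[B_3(8)/6,\infty)$ contains every integer, so no single-level floor ever has to be small.
\end{itemize}

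The paper itself does not reprove this statement; it cites BEG15. It also recovers it independently as the case $b=1$ of Theorem~\ref{thm:omega3}, by the lift from $\omega=2$.
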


In the same paper they remark that ``similar arguments could be used'' to handle
$\omega\ge 4$, but for $\omega=2$ they write only: ``\emph{We also conjecture
that a similar result holds for $\omega=2$}'' (\cite{BEG2015}, p.~2).  The
difficulty is structural: as $\omega$ decreases, the combinatorial ``feed''
driving the BEG15 induction becomes thinner, and at $\omega=2$ it degenerates to
the very thinnest object the method can use --- the set of subset sums of
single-prime reciprocals.  It is precisely this degeneration that BEG15 declined
to push through; we isolate the exact mechanism, and our way past it, in
\S\ref{ssec:new}.

\subsection{Main result}
We resolve the integer case of the $\omega=2$ conjecture.

\begin{theorem}\label{thm:main}
Every natural number $m\ge 1$ can be written as a finite sum of distinct unit
fractions
\[
m=\sum_{i}\frac{1}{n_i},\qquad n_1<n_2<\cdots<n_k,
\]
in which each denominator $n_i$ is a product of two distinct primes (a
semiprime).
\end{theorem}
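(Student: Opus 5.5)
The plan follows the BEG15 induction on a growing prime window. Fix the primes $p_1<p_2<\cdots$ and, for $N\ge 1$, let $\mathcal S_N$ be the set of semiprimes $p_ip_j$ ($i<j$) with largest prime factor at most $p_N$. Let $\Sigma_N$ be the set of subset sums $\sum_{n\in T}1/n$ over $T\subseteq\mathcal S_N$, and $B_N=\sum_{n\in\mathcal S_N}1/n$ its maximum. Since $\sum_p 1/p$ diverges, $B_N\approx\tfrac12(\log\log p_N)^2\to\infty$, so every integer $m$ lies below $B_N$ for large $N$. It therefore suffices to prove a density statement: there is an explicit \emph{gap-free window}
\[
I_N=[\,Y_0(N),\;B_N-Y_0(N)\,]
\]
such that every rational in $I_N$ with denominator dividing $Q_N=p_1\cdots p_N$ lies in $\Sigma_N$. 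Since $B_N-Y_0(N)$ grows, and $Y_0(N)\to0$ is at least bounded by a constant below $1$, every integer $m\ge1$ eventually falls in some window, which proves Theorem~\ref{thm:main}.

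\textbf{Induction step.} Passing from $N$ to $N+1$ adds the new semiprimes $p_ip_{N+1}$ for $i\le N$, contributing the sums $\tfrac{1}{p_{N+1}}\sigma$, where $\sigma$ ranges over subset sums of $\{1/p_1,\dots,1/p_N\}$. This is the ``thin feed''. Write a target $x\in I_{N+1}$ with denominator dividing $Q_{N+1}$ as $x=x'+\sigma/p_{N+1}$.

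\begin{enumerate}[label=(\roman*)]
\item Clear the $p_{N+1}$-part of the denominator. This means choosing $\sigma$ in a prescribed residue class modulo $p_{N+1}$, after scaling by $Q_N$. Here I would invoke Olson's addition theorem (stated earlier as a cited axiom): the subset sums of the distinct nonzero residues $Q_N/p_i \bmod p_{N+1}$ cover all of $\Z/p_{N+1}$ once $N$ is large, around $\sqrt{2p_{N+1}}$ elements suffice. Moreover, they do so using a $\sigma$ from a small sub-collection with controlled size $\sigma\le\beta(N)$, and also with a complementary choice of size at least $B'-\beta'(N)$.
\item Check that $x'=x-\sigma/p_{N+1}$ lands in $I_N$, using whichever of the two choices keeps $x'$ inside the old window.
\end{enumerate}

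The requirement in (ii) is exactly the onset inequality $Y_0(N)\le\min\{\beta(N),\beta'(N)\}$ quoted in the abstract. The slack the new feed provides, roughly $\tfrac{1}{p_{N+1}}\sum_{i\le N}1/p_i$, must absorb the residue-fixing cost and the window's lower floor.

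\textbf{Main obstacle and base case.} The main obstacle is proving this onset inequality for all $N\ge10$. Because $\beta,\beta'$ are of order $(\log\log p_N)/p_{N+1}$, they shrink, so $Y_0(N)$ must be defined to decay at a matching rate. I would first take $Y_0(N)$ proportional to $\tfrac1{p_{N+1}}\sum_{i\le N}1/p_i$, minus the Olson cost. I would then verify the inequality asymptotically using elementary Chebyshev/Mertens bounds ($\sum_{p\le x}1/p\ge\log\log x-1$, and $p_{N+1}\le 2p_N$ by Bertrand). Below the resulting explicit threshold, I would check the inequality, and the base window at $N=10$ (full subset-sum coverage of $\Sigma_{10}$ over the relevant range), by direct machine computation.

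Finally, small integers like $m=1$ need to sit inside the base window, or be exhibited explicitly. For $m=1$, the subset sums over semiprimes up to a moderate bound already reach $1$, and I would verify this computationally.
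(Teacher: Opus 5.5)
You have the paper's architecture: the splitting recursion, Olson for the residue of the feed modulo $p_{N+1}$, small $\sigma$ versus complementary $\sigma$ at the two edges, exact computation at $N=10$ and on a finite range, and Chebyshev beyond it. However, the invariant you propagate is wrong, and that invariant is where the proof actually lives.

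You use $Y_0(N)$ for two different things: the floor of the window $I_N$, and the onset radius in $Y_0(N)\le\min\{\beta(N),\beta'(N)\}$. Write $f_N$ for the floor. Write $w_{N+1}=Y_0(N)/P_N$ for the normalized onset radius: the maximum over residues $r$ modulo $p_{N+1}$ of the least subset sum $\sigma$ of $\{1/p_1,\dots,1/p_N\}$ with $Q_N\sigma\equiv r\pmod{p_{N+1}}$.

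At the bottom edge of $I_{N+1}$ you have $x'=x-\sigma/p_{N+1}\le x$. Targets $x$ just above $f_{N+1}$ occur in every residue class. So, up to rounding of order $1/Q_N$, the step requires $f_{N+1}\ge f_N+w_{N+1}/p_{N+1}$: the floor must \emph{increase} at every step by the residue-fixing cost. The cost and the slack both shrink like $1/p_{N+1}$, but that does not let the floor itself shrink.

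A floor chosen to ``decay at a matching rate'' therefore fails outright. For $x=f_{N+1}<f_N$ no $\sigma$ puts $x'$ back into $I_N$, and the inductive hypothesis says nothing below $f_N$. In fact $f_N\to0$ is exactly the gap-free-floor conjecture (Conjecture~\ref{conj:star}) that the paper leaves open.

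The missing idea is to choose the floor so that its increment is a fixed fraction of the new feed's mass. The paper takes $f_N=B_N/6$, i.e.\ the window $[B_N/6,5B_N/6]$. Then $p_{N+1}(f_{N+1}-f_N)=A_N/6$, and the step needs exactly $w_{N+1}\le A_N/6$. Up to rounding this is $Y_0(N)\le\min\{\beta(N),\beta'(N)\}$ (Lemmas~\ref{lem:Bbound}--\ref{lem:Bprime}). The top edge is symmetric, and the window length being at least $\sigma^1(N)$ (Lemma~\ref{lem:seam}) guarantees that one of your two choices of $\sigma$ is always available. Because this floor grows, integers are caught by the overlapping union $\bigcup_{N\ge10}[B_N/6,5B_N/6]$, not by a single window with bounded floor. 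That uses $B_{N+1}\le5B_N$ (Proposition~\ref{prop:overlap}) and $B_{10}/6<1$.

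Alternatively, let the floor accumulate, $f_N\le f_{10}+\sum_{k=11}^{N}w_k/p_k$. Then show the sum stays below $1$ using $w_k=O(p_k^{-1/2})$ from Olson together with computed $w_k$ for small $k$. This is the paper's Lemma~\ref{lem:floorrec} and Theorem~\ref{thm:uniform}, which give $0.181$. Either bookkeeping closes your argument, but some such choice is needed, and the one you propose does not work.

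A minor point: Olson's theorem needs more than $\sqrt{4q-3}\approx2\sqrt q$ residues, not $\sqrt{2q}$. Subsets of $\{\pm1,\dots,\pm k\}$ with $k^2+k+1<q$ show that $\sqrt{2q}$ is not enough in general. Only the $O(\sqrt q)$ order matters for the tail, so this does not affect the argument.
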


Equivalently, the reciprocals $\{1/(pq):p\ne q\text{ prime}\}$ form a universal
additive basis for $\N$ in \emph{distinct} unit fractions.  This is the integer
($b=1$) case of the Erd\H{o}s--Graham problem above, and exactly the $\omega=2$
statement conjectured in BEG15.

Theorem~\ref{thm:main} is the engine; the rest of the paper drives it as far as it
goes. \S\ref{sec:rational} extends to all squarefree denominators above an explicit
threshold (Theorems~\ref{thm:above},~\ref{thm:uniform}); \S\ref{sec:omega3} proves
the full rational $\omega=3$ statement (Theorem~\ref{thm:omega3}) that Butler,
Erd\H{o}s and Graham conjectured but left unpublished, by lifting the
$\omega=2$ threshold theorem, and a descent extends it to all $\omega\ge3$
(Corollary~\ref{cor:omegak}); and \S\ref{sec:open} isolates the single remaining
open statement as the gap-free-floor conjecture for an explicit subset-sum set.
All results are unconditional.  A reader wanting only the
integer theorem may read \S\S\ref{sec:prelim}--\ref{sec:comp}; \S\S\ref{sec:rational}--\ref{sec:omega3}
are the rational extension, and \S\ref{sec:open} the open core.

A concrete instance for $m=1$ was already known: Johnson \cite{Johnson1978}
(reproduced in \cite{Guy} and in BEG15) exhibited $1$ as a sum of $48$ unit
fractions with semiprime denominators.  What was missing --- and what we supply
--- is the assertion for \emph{all} natural numbers, by an effective but
non-constructive argument paralleling the $\omega=3$ case.

\subsection{Where BEG15 stops, and the new ingredient}\label{ssec:new}
Since this paper pushes a known method through its hardest case, we state at the
outset exactly where the published $\omega=3$ argument halts and how we get past
it.

\paragraph{The skeleton is BEG15's.}
We use the BEG15 machine unchanged: the splitting recursion
\[
L^2(N+1)=L^1(N)+p_{N+1}\,L^2(N)
\tag{S}
\]
(Section~\ref{sec:step}), the propagation of a central contiguous block of subset
sums by induction on $N$ (Lemma~\ref{lem:block}), Olson's addition theorem
\cite{Olson1968} to control residues modulo the incoming prime $q=p_{N+1}$, and
the Mertens-type growth $B_N\to\infty$ that makes the induction's covering
intervals eventually swallow every integer.  None of these ingredients is new.

\paragraph{The precise obstruction.}
What varies with $\omega$ is the \emph{feed}: the leading term $L^{\omega-1}(N)$
of the recursion, which must realize every residue class modulo $q$.  For
$\omega\ge 3$ the feed $L^{\omega-1}(N)$ is already a rich subset-sum set, and
BEG15 fill the target block by writing this term as shifted copies of an inner,
residue-complete subset-sum set $P(S_{\dots})$ and merging the copies into one
contiguous run by a bounded max-gap (``runs connect'') estimate; this requires
the inner set to be \emph{both} residue-complete \emph{and} low-gap.  At
$\omega=2$ the feed collapses to the thinnest object the method admits,
$L^1(N)=P(D_1(N))$, the subset sums of the $N$ single-prime atoms $P_N/p_i$, and
the inner set on which the BEG15 decomposition rests degenerates to a
\emph{single atom} --- neither residue-complete nor low-gap.  The runs-connect
argument then has nothing to connect, and the $\omega=3$ proof does not
transcribe.  This is precisely why BEG15 could write that ``similar arguments''
handle $\omega\ge 4$ (where the feed only gets richer) yet recorded $\omega=2$
only as a \emph{conjecture}.

\paragraph{The new ingredient.}
We do not try to restore internal structure to the thin feed.  Instead we observe
that in the $\omega=2$ recursion the covering window is always at least as long as
the entire feed range: its length is $r_{hi}-r_{lo}\ge\sigma^1(N)=\max L^1(N)$
(Lemma~\ref{lem:seam}).  An interval that long cannot lie strictly inside
$(0,\sigma^1(N))$, so it must contain one of the feed endpoints $0$ or
$\sigma^1(N)$.  Hence the feed $L^1(N)$ is used \emph{directly} as the
residue-complete object --- its residue-completeness modulo $q$ is just Olson
applied to the single-prime atoms (Fact~\ref{fact:olson-feed}) --- and the only
thing that can fail is whether the feed attains every residue \emph{soon enough},
i.e.\ within the available strip.  All remaining difficulty is thereby
concentrated into the single endpoint inequality
\[
Y_0(N)\ \le\ \beta(N),
\]
with $Y_0(N)$ the residue-completeness onset radius of the feed and $\beta(N)$ the
strip length (Section~\ref{sec:analytic}).  Proving this for every $N\ge 10$ ---
exactly on a finite range, and by an elementary Chebyshev/monotonicity estimate
beyond it --- is the content the $\omega=2$ case demands and that BEG15 did not
supply.

\subsection{Scope and computations}\label{ssec:scope}
\begin{itemize}[leftmargin=1.5em]
\item Theorem~\ref{thm:main} settles the $b=1$ integer case of the problem, and
Section~\ref{sec:rational} extends the \emph{same} engine to every squarefree $b$
\emph{above} an explicit threshold (Theorems~\ref{thm:above},~\ref{thm:uniform}):
$a/b$ is unconditionally representable once $a/b\ge\min\{B_{N_b}/6,\,\tfrac15\}$.  What
remains \textbf{open} is only the \emph{below-threshold} bottom edge of the
$\omega=2$ problem --- rationals with small numerator and large squarefree
denominator, where the target falls below the central block into the low range of
$L^2(N)$.  This is the same end-of-interval difficulty BEG15 leave unresolved even
for $\omega=3$ (\cite{BEG2015}, p.~8: ``the structure of $L_{n+3}(n)$ is not well
understood at the ends of the interval'').  We resolve it for $\omega=3$
(Theorem~\ref{thm:omega3}, by a lift that bypasses the deep core) and isolate the
remaining $\omega=2$ deep core as one explicit gap-free-floor conjecture
$(\mathrm{GFF})$ (\S\ref{sec:open}).

\item The proof is partly \emph{computational}, but only in a bounded and
independently reproducible way.  Three of its ingredients are established by
exact finite computation: the base case at $N_0=10$; the inequality
$Y_0(N)\le\min\{\beta(N),\beta'(N)\}$ for $10\le N\le 299$; and the residue-completeness
of the feed (Fact~\ref{fact:olson-feed}) in the four cases $N=10,11,12,13$ where
Olson's bound does not apply.  Every such
computation is exact-integer arithmetic; none uses floating point in a
load-bearing way.  The remaining ingredients are analytic but \emph{elementary}:
Olson's addition theorem, the elementary Chebyshev prime bounds, and the
monotonicity of $A_N$.  The rational extension of Section~\ref{sec:rational} adds
only a finite $47$-pair sliver check; its uniform threshold reuses the $N_0=10$
base case above (no separate large computation).  We
describe all computations and their independent re-verification in
Section~\ref{sec:comp}.
\end{itemize}

\subsection{Relation to other recent work}
Two recent preprints address \emph{different} restrictions on the
denominator and do not bear on the present problem: Czenky et al.\
\cite{Czenky2025} study representations using a \emph{fixed finite} set of primes
(motivated by fusion categories), and a recent preprint \cite{TwoPrimePowers}
studies denominators of the shape $p^aq^b$ for two \emph{fixed} primes.  Neither
proves the $\omega=2$ universal statement for varying prime denominators.  As of
this writing the $\omega=2$ integer case appears unpublished, and the problem is
listed as open \cite{Bloom306}.

\section{Notation and the reduction \texorpdfstring{$(\star)$}{(*)}}
\label{sec:prelim}

Throughout, $p_n$ denotes the $n$-th prime ($p_1=2$, $p_2=3,\dots$), and
\[
P_N=p_1p_2\cdots p_N
\]
is the $N$-th primorial.  We work with subset sums of two ``atom'' sets.

\begin{definition}\label{def:atoms}
For $1\le r\le N$ put
\[
D_r(N)=\Bigl\{\,\tfrac{P_N}{p_{i_1}\cdots p_{i_r}} : 1\le i_1<\cdots<i_r\le N\,\Bigr\},
\]
the set of $\binom{N}{r}$ integers obtained from $P_N$ by deleting $r$ distinct
prime factors.  Given a finite set $X=\{x_1,\dots,x_m\}$ of integers, write
\[
P(X)=\Bigl\{\textstyle\sum_{i=1}^m \varepsilon_i x_i:\varepsilon_i\in\{0,1\}\Bigr\}
\]
for its set of subset sums.  Put
\[
L^r(N)=P\bigl(D_r(N)\bigr),\qquad
\sigma^r(N)=\sum_{x\in D_r(N)} x,
\]
so that $\sigma^r(N)=\max L^r(N)$ and (since $0\in L^r(N)$ and $L^r(N)$ is
symmetric about $\sigma^r(N)/2$) $L^r(N)\subseteq[0,\sigma^r(N)]$.
\end{definition}

We use the two cases $r=1$ and $r=2$:
\[
D_1(N)=\{P_N/p_i : 1\le i\le N\},\qquad
D_2(N)=\{P_N/(p_ip_j):1\le i<j\le N\}.
\]
We call $D_1(N)$ the \emph{feed} and $L^1(N)$ the \emph{feed sums}; $D_2(N)$ is
the \emph{semiprime atom set} for the first $N$ primes.

\begin{remark}
Dividing an element $P_N/(p_ip_j)\in D_2(N)$ by $P_N$ gives the unit fraction
$1/(p_ip_j)$ with semiprime denominator.  Thus subset sums of $D_2(N)$ correspond
exactly to sums of distinct semiprime unit fractions whose primes lie among the
first $N$. This yields the basic reduction.
\end{remark}

\begin{proposition}[reduction $(\star)$]\label{prop:star}
A positive rational $m$ is a sum of distinct unit fractions with semiprime
denominators using primes among the first $N$ if and only if
\begin{equation}
P_N\, m\in L^2(N).
\tag{$\star$}\label{eq:reduction}
\end{equation}
In particular, if a natural number $m$ satisfies $P_N m\in L^2(N)$ for some $N$,
then $m$ has a representation as in Theorem~\ref{thm:main}.
\end{proposition}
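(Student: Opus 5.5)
The plan is to prove both directions by unwinding the definitions through the bijection between $D_2(N)$ and the set of semiprimes $p_ip_j$ with $1\le i<j\le N$. The map $\{i,j\}\mapsto P_N/(p_ip_j)$ is injective, by unique factorisation. Consequently a subset $S\subseteq D_2(N)$ corresponds to a set $T$ of distinct semiprimes whose prime factors lie among $p_1,\dots,p_N$, and conversely. Under this correspondence
\[
\sum_{x\in S} x \;=\; P_N\sum_{n\in T}\frac1n .
\]

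For the forward direction, suppose $m=\sum_{n\in T}1/n$ with $T$ a finite set of distinct semiprimes using only the first $N$ primes. Each $n\in T$ is $p_ip_j$ with $i<j\le N$, so $P_N/n\in D_2(N)$. Distinct $n$ give distinct atoms, hence distinct $\varepsilon$-indices in the definition of $P(\cdot)$. Multiplying by $P_N$ gives $P_Nm=\sum_{n\in T}P_N/n\in P(D_2(N))=L^2(N)$.

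For the converse, suppose $P_Nm\in L^2(N)$. Write $P_Nm=\sum_{x\in S}x$ for some $S\subseteq D_2(N)$, and take $T=\{P_N/x: x\in S\}$. Each element of $T$ is $p_ip_j$ with $i<j$, so it is a semiprime, and the elements are pairwise distinct. Dividing by $P_N$ gives $m=\sum_{n\in T}1/n$. Since $m>0$, the set $S$ is nonempty, so this is a genuine (nonempty) Egyptian fraction representation. Ordering $T$ increasingly yields $n_1<\dots<n_k$.

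The ``in particular'' clause is the converse specialised to $m\in\N$, $m\ge1$, and is exactly the form required in Theorem~\ref{thm:main}.

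There is no real obstacle here. The only points needing care are that distinctness of unit fractions matches distinctness of atoms, which relies on the injectivity from unique factorisation, and that the empty subset is excluded by positivity of $m$.
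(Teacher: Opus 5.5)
Your proposal is correct and follows the paper's own argument. In both directions you use the same scaling-by-$P_N$ correspondence between subsets of $D_2(N)$ and sets of distinct semiprimes $p_ip_j$ with $i<j\le N$; your extra remarks (injectivity of $\{i,j\}\mapsto P_N/(p_ip_j)$ by unique factorisation, and positivity of $m$ excluding the empty subset) only make explicit points the paper leaves implicit.
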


\begin{proof}
A subset $S\subseteq D_2(N)$ with $\sum S=P_N m$ corresponds, after dividing by
$P_N$, to a set of distinct semiprime reciprocals $1/(p_ip_j)$ summing to $m$;
the denominators are distinct semiprimes since the atoms $P_N/(p_ip_j)$ are
distinct.  Conversely a representation $m=\sum 1/(p_ip_j)$ with primes among the
first $N$ scales to a subset of $D_2(N)$ summing to $P_N m$.
\end{proof}

It is convenient to normalize $\sigma^2(N)$ by $P_N$.  Set
\[
A_N:=\sum_{i\le N}\frac{1}{p_i},\qquad
B_N:=\frac{\sigma^2(N)}{P_N}=\sum_{1\le i<j\le N}\frac{1}{p_ip_j}
=\frac12\!\left(A_N^2-\sum_{i\le N}\frac{1}{p_i^2}\right).
\]
By Mertens' theorem $A_N\to\infty$, while $\sum_i p_i^{-2}$ converges; hence
$B_N\to\infty$, and indeed $B_N\sim\frac12(\log\log N)^2$.  Below we use only that
$A_N$ is increasing and unbounded.

Table~\ref{tab:notation} collects the recurring notation.

\begin{table}[h]
\centering
\small
\begin{tabular}{@{}lll@{}}
\toprule
symbol & meaning & defined \\
\midrule
$p_i$ & the $i$-th prime ($p_1=2$); $q=p_{N+1}$ & \S\ref{sec:prelim}\\
$P_N$ & primorial $p_1\cdots p_N$ & \S\ref{sec:prelim}\\
$D_r(N)$ & atoms $P_N/(p_{i_1}\cdots p_{i_r})$ ($r$ primes deleted) & Def.~\ref{def:atoms}\\
$L^r(N)$ & subset sums of $D_r(N)$; $\sigma^r(N)=\max L^r(N)$ & Def.~\ref{def:atoms}\\
$A_N$ & $\sum_{i\le N}1/p_i$ & \eqref{eq:reduction} ff.\\
$B_N$ & $\sigma^2(N)/P_N=\sum_{i<j}1/(p_ip_j)$ (normalized) & \S\ref{sec:prelim}\\
$\beta(N),\beta'(N)$ & integer strip lengths $a_{N+1}-q\,a_N$, etc. & Def.~\ref{def:Y0}\\
$Y_0(N)$ & residue-completeness onset radius of the feed & Def.~\ref{def:Y0}\\
$\gamma_N,\gamma_\infty$ & gap-free floor of $L^2(N)$; its uniform bound & \S\ref{sec:rational}\\
$\gamma_{\mathrm{ex}}(M,r)$ & gap-free floor with prime $r$ deleted & \S\ref{sec:omega3}\\
$w_k$ & floor-recurrence cost $=Y_0(k-1)/P_{k-1}$ & \eqref{eq:wmin}\\
$B_k(N)$ & $\sum_{i_1<\cdots<i_k}1/(p_{i_1}\cdots p_{i_k})$; $B_2=B_N$ & \S\ref{sec:omega3}\\
$B_\Pi(M,r)$ & $B_2$ over the primes $\{p_1,\dots,p_M\}\setminus\{r\}$ & \S\ref{sec:omega3}\\
$N_b$ & index of the largest prime factor of $b$ & \S\ref{sec:rational}\\
\bottomrule
\end{tabular}
\caption{Recurring notation.}
\label{tab:notation}
\end{table}

\section{The induction skeleton}\label{sec:skeleton}

The engine of the proof is the following block-containment statement, the
$\omega=2$ analogue of BEG15's Lemma~1.

\begin{lemma}[central block]\label{lem:block}
For every $N\ge 10$,
\[
L^2(N)\ \supseteq\ \Bigl[\bigl\lceil\tfrac16\sigma^2(N)\bigr\rceil,\
\bigl\lfloor\tfrac56\sigma^2(N)\bigr\rfloor\Bigr]\cap\Z .
\]
That is, $L^2(N)$ contains \emph{every} integer in the central block
$\bigl[\tfrac16\sigma^2(N),\tfrac56\sigma^2(N)\bigr]$.
\end{lemma}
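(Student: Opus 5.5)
We argue by induction on $N$, with the base case $N=10$ settled by the finite computation described in \S\ref{ssec:scope}: exact enumeration of $L^2(10)$ shows it contains every integer in $[\lceil\sigma^2(10)/6\rceil,\lfloor5\sigma^2(10)/6\rfloor]$.

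For the inductive step, fix $N\ge10$ and put $q=p_{N+1}$. Every atom of $D_2(N+1)$ either involves the deleted prime $q$, giving $P_{N+1}/(q p_i)=P_N/p_i\in D_1(N)$, or does not, giving $q\cdot P_N/(p_ip_j)$. This yields the splitting (S):
\[
L^2(N+1)=L^1(N)+q\,L^2(N)\quad\text{and}\quad \sigma^2(N+1)=\sigma^1(N)+q\,\sigma^2(N).
\]
Write $a_N=\lceil\sigma^2(N)/6\rceil$ and $b_N=\lfloor5\sigma^2(N)/6\rfloor$. By the inductive hypothesis, $q\,L^2(N)$ contains the arithmetic progression $\{qk: a_N\le k\le b_N\}$.

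Given a target $t\in[a_{N+1},b_{N+1}]$, we look for a feed sum $f\in L^1(N)$ with $f\equiv t\pmod q$ and $(t-f)/q\in[a_N,b_N]$. Then $t=f+qk$ lies in $L^2(N+1)$.

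Residue-completeness of the feed is Fact~\ref{fact:olson-feed}. The atoms $P_N/p_i$ are nonzero modulo $q$, there are $N$ of them, and Olson's theorem gives $P(D_1(N))\equiv\Z/q$ once $N$ exceeds roughly $\sqrt{q}$-order bounds (with $N=10,\dots,13$ checked by hand). We need more than completeness, though: for the argument to work, each residue must be attained within a window of controlled size. Define $Y_0(N)$ as the least $Y$ such that both $L^1(N)\cap[0,Y]$ and $L^1(N)\cap[\sigma^1(N)-Y,\sigma^1(N)]$ hit every residue class modulo $q$; the second condition follows from the first by the symmetry $f\mapsto\sigma^1(N)-f$.

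The admissible $f$ for a given $t$ are those with $t-qb_N\le f\le t-qa_N$. This is an interval of length about $\tfrac23 q\sigma^2(N)$, which dwarfs $\sigma^1(N)\approx A_N P_N$. Intersected with $[0,\sigma^1(N)]$, it therefore always contains one of the endpoints $0$ or $\sigma^1(N)$; this is the seam observation (Lemma~\ref{lem:seam}). Two cases follow.
\begin{itemize}
\item For $t$ near the bottom of the new block, the window contains $0$. We need it to extend at least $Y_0(N)$ to the right of $0$, i.e.\ $t-qa_N\ge Y_0(N)$. The worst case is $t=a_{N+1}$, so this requires $\beta(N):=a_{N+1}-q\,a_N\ge Y_0(N)$.
\item For $t$ near the top, the window contains $\sigma^1(N)$. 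Symmetrically, we need $\beta'(N):=q\,b_N+\sigma^1(N)-b_{N+1}\ge Y_0(N)$.
\end{itemize}
For $t$ in the middle, the window covers all of $[0,\sigma^1(N)]$, and completeness alone suffices.

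Using $\sigma^2(N+1)=\sigma^1(N)+q\sigma^2(N)$, both strip lengths come to about $\tfrac16\sigma^1(N)$ up to rounding, i.e.\ $\beta(N),\beta'(N)\approx\tfrac16A_NP_N$.

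The induction therefore closes exactly when
\[
Y_0(N)\le\min\{\beta(N),\beta'(N)\},
\]
and this onset inequality is where I expect the main obstacle. For $10\le N\le299$ I would verify it by exact computation of $Y_0(N)$ (a dynamic program on residues mod $q$, ordering feed sums from small to large). For $N\ge300$ I would bound $Y_0(N)$ analytically. Olson guarantees that a subset of roughly $2\sqrt q$ of the atoms already has subset sums covering $\Z/q$. Choosing these to be the atoms $P_N/p_i$ with the largest $p_i$, i.e.\ the smallest atoms, gives
\[
Y_0(N)\le\sum_{\text{those }i}P_N/p_i\lesssim 2\sqrt{q}\,P_N/p_{N-2\sqrt q}.
\]
Chebyshev bounds give $p_{N+1}\asymp N\log N$, so this is $O\bigl(P_N/\sqrt{N\log N}\bigr)$, which is far below $\tfrac16A_NP_N$ because $A_N$ is increasing and at least a constant. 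The delicate part is making the constants explicit enough that the crossover lands at $N\le299$, and handling the rounding in $a_{N+1},b_{N+1}$ so that it costs only $O(1)$.
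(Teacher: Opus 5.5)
Your proposal is essentially the paper's own proof. It has the same ingredients: the $N=10$ base case by exact computation, the splitting recursion (S), the observation that the window $[t-qb_N,\,t-qa_N]$ has length at least $\sigma^1(N)$ and so always pins an endpoint of the feed, the reduction to $Y_0(N)\le\min\{\beta(N),\beta'(N)\}$, exact verification for $10\le N\le 299$, and, for $N\ge300$, Olson applied to the $c_0\approx2\sqrt q$ smallest atoms combined with Chebyshev bounds.

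One small correction: the rounding loss in $\beta,\beta'$ is not $O(1)$ but up to about $q$, coming from the terms $q\lceil\sigma^2(N)/6\rceil$ and $q\lfloor5\sigma^2(N)/6\rfloor$. That is why the paper works with $\beta(N),\beta'(N)\ge\tfrac16\sigma^1(N)-q$; the loss is harmless since $q/P_N$ is negligible. Also, Olson's theorem needs the residues $P_Np_i^{-1}\bmod q$ to be distinct as well as nonzero, which they are.
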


Lemma~\ref{lem:block} is proved by induction on $N$, with:
\begin{itemize}[leftmargin=1.5em]
\item \textbf{Base case} $N_0=10$ (Proposition~\ref{prop:base}, a direct exact
computation), and
\item \textbf{Induction step} $N\to N+1$, valid for all $N\ge 10$
(Section~\ref{sec:step}).
\end{itemize}
We first record the base case and then deduce Theorem~\ref{thm:main}, deferring
the step to the next section.

\begin{proposition}[base case]\label{prop:base}
With $N_0=10$,
\[
P_{10}=6{,}469{,}693{,}230,\qquad \sigma^2(10)=6{,}166{,}988{,}769,
\]
and
\[
L^2(10)\ \supseteq\ \bigl[\,1{,}027{,}831{,}462,\ 5{,}139{,}157{,}307\,\bigr]\cap\Z
=\Bigl[\bigl\lceil\tfrac16\sigma^2(10)\bigr\rceil,
\bigl\lfloor\tfrac56\sigma^2(10)\bigr\rfloor\Bigr]\cap\Z,
\]
with no integer of the block missing.
\end{proposition}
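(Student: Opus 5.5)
This base case is a single exact finite computation, and I would organize it so that it is cheap and independently checkable rather than brute-forcing all $2^{45}$ subsets of $D_2(10)$.

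\textbf{Step 1: the constants.} First verify the displayed constants in exact integer arithmetic:
\[
P_{10}=2\cdot3\cdots29=6{,}469{,}693{,}230,\qquad \sigma^2(10)=\sum_{i<j\le10}P_{10}/(p_ip_j)=6{,}166{,}988{,}769.
\]
Then check that $\lceil\sigma^2(10)/6\rceil=1{,}027{,}831{,}462$ and $\lfloor5\sigma^2(10)/6\rfloor=5{,}139{,}157{,}307$. These are routine.

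\textbf{Step 2: reduce the problem with the splitting recursion.} The substantive task is to certify that no integer of the block is missing from $L^2(10)$. I would exploit the splitting recursion (S) with $N=9$ and $q=p_{10}=29$: $L^2(10)=L^1(9)+29\,L^2(9)$. This holds because every atom $P_{10}/(p_ip_j)$ either omits $29$ from its deleted pair, giving $29\cdot P_9/(p_ip_j)$, or deletes $29$, giving $P_9/p_i$.

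Thus $L^2(10)$ is the union over the $2^9=512$ feed sums $f\in L^1(9)$ of the shifted, dilated copies $f+29\,L^2(9)$. The inner set $L^2(9)$ has only $36$ atoms and lives in $[0,\sigma^2(9)]$, with $\sigma^2(9)=\sigma^2(10)/29$ up to the feed contribution, so roughly $2\times10^8$. It is computed exactly as a bitset of about $2\times10^8$ bits by the standard shift-or subset-sum recursion over the $36$ atoms.

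\textbf{Step 3: cover the block residue class by residue class.} For a target $n$ in the block, $n\in L^2(10)$ iff there is some $f\in L^1(9)$ with $f\equiv n\pmod{29}$ and $(n-f)/29\in L^2(9)$. So I would:
\begin{enumerate}[leftmargin=1.5em]
\item Sort the $512$ feed sums by residue mod $29$ (all $29$ residues are attained; otherwise the block could not be full).
\item For each residue class $c$, form the union $U_c=\bigcup_{f\equiv c}\bigl((f-c)/29+L^2(9)\bigr)$ as a bitset.
\item Check that $U_c$ contains every integer $t$ with $29t+c$ in the block.
\end{enumerate}
The total work is about $512$ shifted ORs of a $2\times10^8$-bit array, on the order of $10^9$ word operations, with all arithmetic exact.

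\textbf{Main obstacle: certification, not mathematics.} The hard part is producing a check that is trustworthy and reproducible, since a naive bitset over the full range $[0,\sigma^2(10)]$ costs about $770$MB and is awkward to formalize. The residue-class decomposition above is what I would use to keep it small.

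For a Lean/\texttt{native\_decide} certificate I would go further. Instead of storing bitsets, I would store $L^2(9)$ as a sorted list of maximal intervals; it should be gap-free on a long central range with sparse gaps near the ends. Then for each class $c$ the claim reduces to checking that finitely many shifted interval lists cover a single target interval.

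As an independent cross-check, I would also run the full-range bitset computation directly on the $45$ atoms of $D_2(10)$, and confirm that both methods report no missing integer in $[1{,}027{,}831{,}462,\,5{,}139{,}157{,}307]$.
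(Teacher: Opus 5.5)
Your proposal is correct, and it organises the finite check differently from the paper. The paper builds one packed bitset of $\sigma^2(10)+1$ bits ($\approx771$\,MB) by shift-or over all $45$ atoms of $D_2(10)$, then scans the block for a zero bit. You instead apply the splitting recursion (S) once, at $N=9$ with $q=29$. Then only the bitset of $L^2(9)$ ($36$ atoms, about $2\times10^8$ bits) is stored, and each residue class $c$ mod $29$ is covered by the union of the shifted copies $(f-c)/29+L^2(9)$. This is an exact reformulation: $L^2(10)=L^1(9)+29\,L^2(9)$, and $f\equiv c$ with $f\ge0$ forces $f\ge c$. It buys a memory reduction of more than an order of magnitude and a split into $29$ independent, smaller checks, which is friendlier to formalisation; your interval-list idea is a natural route to a \texttt{native\_decide} certificate. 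The cost is extra bookkeeping (residue sorting, the index map $n=29t+c$), and your full-range cross-check, which is literally the paper's computation, guards against errors there. It is worth saying explicitly that your computation is the step $9\to10$ carried out exactly, and that it succeeds only because you keep the \emph{whole} of $L^2(9)$. Since $Y_0(9)>\beta(9)$ (Remark~\ref{rem:base9}), some targets just above $\lceil\sigma^2(10)/6\rceil$ have every congruent feed sum too large for the quotient to land in the central block of $L^2(9)$. Those targets must be reached through the part of $L^2(9)$ below $\lceil\sigma^2(9)/6\rceil$. Finally, the paper rereads its single bitset for the gap-free floor $\gamma_{10}=0.11439\ldots$ used in \S\ref{sec:rational}; your scheme yields this too, by enlarging the target $t$-ranges.
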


\begin{proof}
This is verified by exact subset-sum reachability over the $\binom{10}{2}=45$
atoms of $D_2(10)$; see Section~\ref{sec:comp} for the method (a packed bitset of
$\sigma^2(10)+1$ bits, $\approx 771$\,MB) and its independent re-verification.
Every integer in the stated block is a subset sum of $D_2(10)$; $0$ integers are
missing.
\end{proof}

\begin{remark}[why the base is $N_0=10$, not $9$]\label{rem:base9}
One might hope to seed at $N_0=9$, where $L^2(9)$ also covers its central block
$[33{,}520{,}589,\,167{,}602{,}941]$ exactly.  However the induction step
$N\to N+1$ requires the inequality $Y_0(N)\le \beta(N)$ of
Section~\ref{sec:analytic}, and this \emph{fails} at $N=9$:
$Y_0(9)=91{,}525{,}280>\beta(9)=55{,}734{,}381$.  The inequality $Y_0(N)\le \beta(N)$
holds for all $N\ge 10$, so seeding directly at $N_0=10$ --- where the step is
valid from the start --- repairs this and the induction runs cleanly upward.
\end{remark}

We isolate the elementary overlap estimate that drives the interval-covering
argument.

\begin{proposition}[explicit overlap]\label{prop:overlap}
For every $N\ge 1$ the sequence $B_N$ is strictly increasing, and for every
$N\ge 10$,
\[
B_{N+1}\le 5B_N,\qquad\text{equivalently}\qquad B_{N+1}-B_N\le 4B_N .
\]
\end{proposition}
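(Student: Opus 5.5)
We prove the two claims separately, using only the identity $B_{N+1}-B_N = A_N/p_{N+1}$ and the monotonicity of $A_N$.

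\textbf{The identity and strict monotonicity.} Adding the prime $q=p_{N+1}$ introduces exactly the new pairs $\{p_i,q\}$ with $i\le N$, so
\[
B_{N+1}-B_N=\sum_{i\le N}\frac{1}{p_i\,p_{N+1}}=\frac{A_N}{p_{N+1}}>0 .
\]
This holds for every $N\ge1$. (At $N=1$, $B_1=0$ is an empty sum and $B_2=1/6$.) So $(B_N)$ is strictly increasing.

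\textbf{The overlap bound.} The inequality $B_{N+1}-B_N\le 4B_N$ is equivalent to
\[
A_N\le 4\,p_{N+1}B_N .
\]
The right-hand side is increasing in $N$, since $B_N$ and $p_{N+1}$ both increase. However, the left-hand side also increases, so we compare growth rates instead. We will prove the stronger and cleaner inequality $A_N\le p_{N+1}B_N$ for $N\ge 10$.

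First, $B_N$ is bounded below in terms of $A_N$. Write $B_N=\tfrac12(A_N^2-\sum_{i\le N}p_i^{-2})$ and use $\sum_i p_i^{-2}<0.46$. For $N\ge 10$ we have $A_N\ge A_{10}\approx1.53$, which gives
\[
B_N\ge \tfrac12(A_N^2-0.46)\ge c\,A_N
\]
with, say, $c=\tfrac12(A_{10}-0.46/A_{10})\approx0.61$. Here we use that $x\mapsto x-0.46/x$ is increasing.

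Then $p_{N+1}B_N\ge p_{11}\cdot 0.61\,A_N=31\cdot0.61\,A_N>A_N$. This holds with enormous room, since even the factor $4$ is unnecessary.

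\textbf{Numerical inputs.} The only concrete facts needed are three elementary numerical bounds:
\begin{itemize}
\item $\sum_p p^{-2}<1/2$, which follows by comparing with $\sum_{n\ge2}$ over odd $n$ plus the term $1/4$, or from the bound $\sum_{n\ge 2}n^{-2}<0.65$ refined over odd terms;
\item the value $A_{10}=\sum_{i\le10}1/p_i\approx1.533$, an exact finite rational check;
\item $p_{N+1}\ge31$ for $N\ge 10$.
\end{itemize}

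I expect no real obstacle. The only step requiring care is producing a rigorous, elementary bound on $\sum_p p^{-2}$ below $A_{10}^2$ that is good enough for $c>1/(4\cdot31)$. Since the required constant is tiny, even the crude bound $\sum_p p^{-2}\le\sum_{n\ge2}n^{-2}<1$ suffices: it gives $B_N\ge\tfrac12(A_N^2-1)$, and $A_{10}^2-1\approx1.35>0$, so $c\ge\tfrac12(A_{10}-1/A_{10})\approx0.44$. Then $4\cdot31\cdot0.44\gg1$ closes the argument.

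For robustness, the finite check $B_{11}\le 5B_{10}$ can also be done directly by exact rational arithmetic. This is consistent with the machine-checked base range.
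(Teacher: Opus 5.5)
Your proof is correct, but for the overlap bound it takes a different route from the paper. The paper bounds the increment \emph{absolutely}. It uses $A_N\le N/2$ (each $1/p_i\le\tfrac12$) and $p_{N+1}\ge N+2$ to get $B_{N+1}-B_N=A_N/p_{N+1}<\tfrac12$. It then uses $B_N\ge B_{10}>\tfrac14$ to get $4B_N>1$. So it needs essentially no arithmetic beyond $B_{10}>\tfrac14$, which even follows from $B_{10}\ge B_3=\tfrac13$. It exploits only that the increment is uniformly bounded while $B_N$ is already large.

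You instead bound the increment \emph{relative} to $B_N$, via $2B_N=A_N^2-S_N\ge A_N\bigl(A_{10}-S/A_{10}\bigr)$ with $S\ge\sum_p p^{-2}$. This gives a uniform bound $A_N/B_N\le 1/c\approx 1.62$. It costs a numerical value of $A_{10}$ and a bound on $\sum_p p^{-2}$. Any of your three choices ($0.46$, $\tfrac12$, or the telescoping bound $<1$) works, since the required $c>1/124$ is tiny.

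In exchange, your route proves far more: $B_{N+1}/B_N\le 1+1/(31c)\approx 1.05$ rather than merely $\le 5$. That sharper ratio is what the paper later re-derives by the same $A_N/B_N$ argument. It appears in the key seam of Lemma~\ref{lem:seam} ($A_N/B_N<1.63$) and in \eqref{eq:ratio} ($B_{N+1}/B_N<1.06$). There it is genuinely needed for the floor recurrence, where a factor of $5$ would be useless. With your version, those later estimates could simply cite this proposition.
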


\begin{proof}
The increment is
\[
B_{N+1}-B_N=\sum_{i\le N}\frac1{p_i\,p_{N+1}}=\frac{A_N}{p_{N+1}}>0,
\]
since the only pairs counted by $B_{N+1}$ and not by $B_N$ are $\{i,N+1\}$ with
$i\le N$.  Hence $B_N$ is strictly increasing, and by direct computation
$B_{10}=0.9532\ldots>\tfrac14$, so $B_N\ge B_{10}>\tfrac14$ for all $N\ge 10$;
thus $4B_N>1$.  On the other hand $p_i\ge 2$ gives $A_N=\sum_{i\le N}1/p_i\le N/2$,
while the $(N+1)$-st prime satisfies $p_{N+1}\ge N+2$ (among $1,\dots,p_{N+1}$
there are exactly $N+1$ primes and $1$ is not prime, so $p_{N+1}\ge N+2$).
Therefore
\[
B_{N+1}-B_N=\frac{A_N}{p_{N+1}}\le\frac{N/2}{N+2}<\frac12<1<4B_N ,
\]
which is the claim.
\end{proof}

\begin{proof}[Proof of Theorem~\ref{thm:main}, assuming Lemma~\ref{lem:block}]
Fix a natural number $m\ge 1$.  By $(\star)$ (Proposition~\ref{prop:star}), $m$
is representable as soon as $P_N m\in L^2(N)$ for some $N$.  By
Lemma~\ref{lem:block}, $L^2(N)$ contains every integer in
$[\tfrac16\sigma^2(N),\tfrac56\sigma^2(N)]$; since $P_N m$ is an integer, it
suffices that
\[
\tfrac16\,\sigma^2(N)\ \le\ P_N m\ \le\ \tfrac56\,\sigma^2(N),
\quad\text{equivalently}\quad
\tfrac16 B_N\ \le\ m\ \le\ \tfrac56 B_N,
\]
for some $N\ge 10$ (dividing by $P_N$ and using $B_N=\sigma^2(N)/P_N$).

Consider the intervals $I_N:=[\tfrac16 B_N,\tfrac56 B_N]$ for $N\ge 10$.  The
right endpoint of $I_N$ is at or above the left endpoint of $I_{N+1}$ exactly
when $\tfrac56 B_N\ge\tfrac16 B_{N+1}$, i.e.\ $B_{N+1}\le 5B_N$, which holds for
all $N\ge 10$ by Proposition~\ref{prop:overlap}.  Consecutive intervals
therefore overlap, and since $B_N$ is increasing and unbounded
($B_N\to\infty$), the union $\bigcup_{N\ge 10} I_N$ is the half-line
$[\tfrac16 B_{10},\infty)$.  As
$B_{10}=\sigma^2(10)/P_{10}=6{,}166{,}988{,}769/6{,}469{,}693{,}230=0.9532\ldots$
gives $\tfrac16 B_{10}=0.1589<1$, this half-line
contains every integer $m\ge 1$.  (For instance $m=1$ first lies in $I_N$ at
$N=17$, where $B_{17}=1.2162\ldots$ gives $I_{17}=[0.2027,1.0135]\ni 1$.)  Every
integer $m\ge 1$ thus lies in some $I_N$, hence is representable.
\end{proof}

\begin{remark}
As in BEG15, the bound on $N$ needed to represent a given $m$ is enormous and the
argument is non-constructive: $m$ enters $I_N$ only once
$B_N\sim\frac12(\log\log N)^2\gtrsim\frac65 m$, i.e.\ at $N$ of order
$\exp\exp(O(\sqrt m))$.  Theorem~\ref{thm:main} asserts existence of a
representation, not a small one.
\end{remark}

\section{The induction step \texorpdfstring{$N\to N+1$}{N to N+1}}
\label{sec:step}

We now prove the step in Lemma~\ref{lem:block}: assuming
\[
L^2(N)\supseteq\bigl[a_N,b_N\bigr]\cap\Z,\qquad
a_N=\bigl\lceil\tfrac16\sigma^2(N)\bigr\rceil,\quad
b_N=\bigl\lfloor\tfrac56\sigma^2(N)\bigr\rfloor,
\]
we deduce the same with $N+1$ in place of $N$, for every $N\ge 10$.

\subsection{The splitting recursion}
Partitioning $D_2(N+1)$ according to whether the largest available prime
$p_{N+1}$ is deleted gives the disjoint union
\[
D_2(N+1)=D_1(N)\ \sqcup\ p_{N+1}\cdot D_2(N).
\tag{R}
\]
Indeed, an element of $D_2(N+1)$ is $P_{N+1}$ divided by two distinct primes
among $p_1,\dots,p_{N+1}$.  If neither deleted prime is $p_{N+1}$ we get
$P_{N+1}/(p_ip_j)=p_{N+1}\cdot P_N/(p_ip_j)\in p_{N+1}D_2(N)$; if one deleted
prime is $p_{N+1}$ we get $P_{N+1}/(p_{N+1}p_i)=P_N/p_i\in D_1(N)$.  Taking subset
sums of a disjoint union, and recalling $P(A\sqcup B)=P(A)+P(B)$ (sumset),
\begin{equation}
L^2(N+1)=L^1(N)+p_{N+1}\,L^2(N).
\tag{S}\label{eq:rec}\end{equation}
This is the $r=2$ instance of BEG15's recursion (1).  Likewise summing all of
$D_2(N+1)$ gives $\sigma^2(N+1)=\sigma^1(N)+p_{N+1}\sigma^2(N)$.

\subsection{Reformulating the step as a covering problem}
The objects in play, all at level $N$: the incoming prime $q=p_{N+1}$; the
inductive block $[a_N,b_N]\subseteq L^2(N)$ and its $q$-scaling
$[r_{lo},r_{hi}]=q[a_N,b_N]$; the feed range $[0,\sigma^1(N)]$; and, for each target
$x$, the feed window $W(x)$ defined below.
By the induction hypothesis $L^2(N)\supseteq[a_N,b_N]$, so
$p_{N+1}L^2(N)=q\,L^2(N)$ contains \emph{every multiple of $q$} in
$[r_{lo},r_{hi}]$, where
\[
r_{lo}=q\,a_N=q\bigl\lceil\tfrac16\sigma^2(N)\bigr\rceil,\qquad
r_{hi}=q\,b_N=q\bigl\lfloor\tfrac56\sigma^2(N)\bigr\rfloor.
\]
By (S), an integer $x$ is in $L^2(N+1)$ provided there is a feed sum
$u\in L^1(N)$ with $u\equiv x\pmod q$ and $r_{lo}\le x-u\le r_{hi}$ (then
$x=u+(x-u)$ with $x-u$ a multiple of $q$ in $[r_{lo},r_{hi}]$, hence
$x-u\in qL^2(N)$).  Equivalently, $x\in L^2(N+1)$ if the feed window
\[
W(x):=[\,x-r_{hi},\,x-r_{lo}\,]\cap[0,\sigma^1(N)]
\]
contains an element of $L^1(N)$ congruent to $x\bmod q$.  We must show this holds
for every integer $x$ in the target block $[a_{N+1},b_{N+1}]$.

\subsection{The window always spans an endpoint of the feed}
\emph{This is the observation the $\omega=2$ case turns on.}
The window $[x-r_{hi},x-r_{lo}]$ has length $r_{hi}-r_{lo}$.  We show this length
is at least $\sigma^1(N)$, so that the window cannot lie strictly inside the open
feed range $(0,\sigma^1(N))$: an interval of length $\ge\sigma^1$ cannot be a
subset of an open interval of length $\sigma^1$.  Hence
$W(x)=[x-r_{hi},x-r_{lo}]\cap[0,\sigma^1(N)]$ always contains one of the two
endpoints $0$ or $\sigma^1(N)$.  The inequality $r_{hi}-r_{lo}\ge\sigma^1(N)$ is
the second seam inequality, proved with large margin in
Lemma~\ref{lem:seam}.  This splits the target block into three contiguous
pieces (Figure~\ref{fig:strips}).

\begin{figure}[h]
\centering
\begin{tikzpicture}[x=0.95cm,y=0.9cm,font=\footnotesize]
  \def\B{7.2}
  \node[anchor=west,font=\footnotesize\itshape] at (0,3.75){target block, partitioned by the seam};
  \draw[thick] (0,3.4)--(\B,3.4);
  \foreach \x in {0,2.4,4.8,\B} \draw (\x,3.5)--(\x,3.3);
  \node[below] at (0,3.3){$a_{N+1}$};
  \node[below] at (2.4,3.3){$\sigma^1{+}r_{lo}$};
  \node[below] at (4.8,3.3){$r_{hi}$};
  \node[below] at (\B,3.3){$b_{N+1}$};
  % three feed snapshots: window inside [0, sigma^1]
  \def\S{4.8}
  \foreach \y/\wa/\wb/\desc in {%
      2.05/0/2.6/{bottom strip: $W(x)=[0,\,x-r_{lo}]$, pinned at $0$},
      1.15/0/4.8/{full-feed block: $W(x)=[0,\,\sigma^1]$},
      0.25/2.2/4.8/{top strip: $W(x)=[x-r_{hi},\,\sigma^1]$, pinned at $\sigma^1$}}{
     \draw (0,\y)--(\S,\y);
     \draw (0,\y+0.09)--(0,\y-0.09);
     \draw (\S,\y+0.09)--(\S,\y-0.09);
     \fill[black!15] (\wa,\y-0.075) rectangle (\wb,\y+0.075);
     \node[right] at (\S+0.25,\y){\desc};
  }
  \node[below] at (0,0.16){\scriptsize $0$};
  \node[below] at (\S,0.16){\scriptsize $\sigma^1$};
\end{tikzpicture}
\caption{The induction step covers the target block in three pieces.  The covering
window $W(x)$ has length $\ge\sigma^1$, so it cannot fit strictly inside the feed
range $(0,\sigma^1)$ and must meet an endpoint: pinned at $0$ in the bottom strip,
at $\sigma^1$ in the top strip, and equal to the whole feed in between.  Each piece
is covered once the feed $L^1(N)$ hits every residue mod $q$ soon enough
(Proposition~\ref{prop:step-reduce}).}
\label{fig:strips}
\end{figure}

\paragraph{Full feed.}
For $x\in[\sigma^1(N)+r_{lo},\,r_{hi}]$ the window is the whole feed range,
$W(x)=[0,\sigma^1(N)]$.  Here $x$ is covered as soon as the feed sums $L^1(N)$ are
\emph{residue-complete} modulo $q$, i.e.\ $L^1(N)\bmod q=\Z_q$; this is
Fact~\ref{fact:olson-feed} below.

\paragraph{Bottom strip.}
For $x\in[a_{N+1},\,\sigma^1(N)+r_{lo})$ the window touches $0$:
$W(x)=[0,\,x-r_{lo}]$, and $x-r_{lo}\ge a_{N+1}-r_{lo}=:\beta(N)$.  Thus
$W(x)\supseteq[0,\beta(N)]$.  So $x$ is covered for \emph{all} such $x$ provided the
feed sums lying in the initial segment $[0,\beta(N)]$ already hit every residue mod
$q$, i.e.\ provided $\beta(N)\ge Y_0(N)$, where $Y_0(N)$ (Definition~\ref{def:Y0}) is
the smallest $y$ for which $L^1(N)\cap[0,y]$ is residue-complete.

\paragraph{Top strip.}
For $x\in(r_{hi},\,b_{N+1}]$ the window touches $\sigma^1(N)$:
$W(x)=[x-r_{hi},\,\sigma^1(N)]$.  The feed $L^1(N)$ is symmetric under
$u\mapsto\sigma^1(N)-u$ (subset complementation in $D_1(N)$), so the top
``onset radius'' measured from $\sigma^1(N)$ downward equals $Y_0(N)$.  The
available top length is shortest at the right end $x=b_{N+1}$, where it equals
\[
\beta'(N):=\sigma^1(N)-\bigl(b_{N+1}-r_{hi}\bigr)=\sigma^1(N)-b_{N+1}+q\,b_N,
\]
so the top strip is covered provided $Y_0(N)\le \beta'(N)$.  We do \emph{not} claim
$\beta'(N)=\beta(N)$: the two strip lengths can differ by a bounded rounding term.  What
we use is that \emph{both} dominate the same explicit integer lower bound,
\[
\beta(N)\ \ge\ \tfrac16\sigma^1(N)-q,
\qquad
\beta'(N)\ \ge\ \tfrac16\sigma^1(N)-q
\tag{LB}
\]
(Lemmas~\ref{lem:Bbound} and~\ref{lem:Bprime}).  Hence the top strip is covered under
the \emph{same} sufficient condition $Y_0(N)\le\tfrac16\sigma^1(N)-q$ as the bottom strip,
and it is this common bound that Theorem~\ref{thm:Y0} establishes.

\begin{remark}[the step at $N=10$, $q=31$, normalized by $P_{10}$]
Concretely (Figure~\ref{fig:strips}): $\sigma^1(10)=1.5334\,P_{10}$, and the target
block is $[5.18,\,25.90]\,P_{10}$, split at $\sigma^1{+}r_{lo}=6.46\,P_{10}$ and
$r_{hi}=24.63\,P_{10}$ into the bottom strip, full-feed block, and top strip.  A
target $x=5.5\,P_{10}$ lies in the bottom strip; its window is
$W(x)=[0,\,0.575\,P_{10}]$, pinned at $0$, and is covered because the feed reaches
every residue mod $31$ already by $Y_0(10)=0.232\,P_{10}$, within the available
$\beta(10)=0.256\,P_{10}$.  The margin $0.024$ is the tightest in the entire
induction (Appendix~\ref{app:data}).
\end{remark}

\subsection{The seams cover, for every \texorpdfstring{$N\ge10$}{N>=10}}
For these three pieces to tile the target block $[a_{N+1},b_{N+1}]$ we need the
ordering
\[
r_{lo}\ <\ a_{N+1}\ \le\ \sigma^1(N)+r_{lo}\ \le\ r_{hi}\ \le\ b_{N+1}\ <\ r_{hi}+\sigma^1(N).
\tag{Seam}
\]
We prove all of (Seam) \emph{analytically}, for every $N\ge10$, with explicit
margins.

\begin{lemma}[seam ordering]\label{lem:seam}
For every $N\ge 10$ the chain \textup{(Seam)} holds; consequently the three
pieces cover $[a_{N+1},b_{N+1}]\cap\Z$ exactly.
\end{lemma}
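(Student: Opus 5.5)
The plan is to verify each of the five inequalities in (Seam) directly from three inputs: the closed form $\sigma^2(N+1)=\sigma^1(N)+q\,\sigma^2(N)$ from (S); the normalizations $\sigma^1(N)=P_NA_N$ and $\sigma^2(N)=P_NB_N$; and the elementary rounding bounds. Since $\sigma^2(N)$ is an integer, we have $\tfrac16\sigma^2\le\lceil\tfrac16\sigma^2\rceil\le\tfrac16\sigma^2+\tfrac56$ and $\tfrac56\sigma^2-\tfrac56\le\lfloor\tfrac56\sigma^2\rfloor\le\tfrac56\sigma^2$. These give
\[
\tfrac{q}{6}\sigma^2(N)\le r_{lo}\le\tfrac q6\sigma^2(N)+\tfrac{5q}6,\qquad
\tfrac{5q}{6}\sigma^2(N)-\tfrac{5q}{6}\le r_{hi}\le\tfrac{5q}6\sigma^2(N),
\]
together with the analogous sandwiches for $a_{N+1}$ and $b_{N+1}$ in terms of $\tfrac16(\sigma^1+q\sigma^2)$ and $\tfrac56(\sigma^1+q\sigma^2)$. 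Each inequality of (Seam) then reduces to a comparison between a multiple of $\sigma^1(N)$ and a rounding error of size $O(q)$, except the middle one, which is a genuine comparison of $\sigma^1$ with $q\sigma^2$.

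\textbf{The four outer inequalities.} First, $a_{N+1}-r_{lo}\ge\tfrac16\sigma^1-\tfrac{5q}6$, so $r_{lo}<a_{N+1}$ as soon as $\sigma^1(N)>5q$. Second, $a_{N+1}\le\tfrac16(\sigma^1+q\sigma^2)+1\le\sigma^1+r_{lo}$ holds because $\tfrac56\sigma^1\ge1$. Third, $b_{N+1}\ge\tfrac56\sigma^1+\tfrac{5q}6\sigma^2-\tfrac56\ge r_{hi}$, again because $\tfrac56\sigma^1\ge\tfrac56$. Fourth, $b_{N+1}-r_{hi}\le\tfrac56\sigma^1+\tfrac{5q}6<\sigma^1$, which is once more the condition $\sigma^1(N)>5q$. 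That condition is crude. Since $\sigma^1(N)\ge P_N/2=p_N P_{N-1}/2$ and $q=p_{N+1}<2p_N$ by Bertrand's postulate (or simply by checking $P_{N-1}\ge P_9$), it holds with an enormous margin for $N\ge10$.

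\textbf{The middle inequality.} The one inequality with real content is $\sigma^1(N)+r_{lo}\le r_{hi}$, i.e.\ $q(b_N-a_N)\ge\sigma^1(N)$. This is the window-length claim used in Figure~\ref{fig:strips}, and I expect it to be the only step needing care. From the rounding bounds, $b_N-a_N\ge\tfrac23\sigma^2(N)-\tfrac53$. After dividing by $P_N$, it suffices to show
\[
q\Bigl(\tfrac23B_N-\tfrac{5}{3P_N}\Bigr)\ \ge\ A_N .
\]
I would use the bounds already in Proposition~\ref{prop:overlap}: $B_N\ge B_{10}>0.95$ for $N\ge10$, $A_N\le N/2$, and $q=p_{N+1}\ge N+2$. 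The left side is then at least $q(0.63-10^{-9})>0.6\,(N+2)$, which exceeds $N/2\ge A_N$. This argument is uniform in $N\ge10$ and needs no computation beyond the value of $B_{10}$ from the base case. It also yields the margin claimed in the text: the window length is roughly $\tfrac23 qB_N P_N$ against a feed range of $A_NP_N$.

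\textbf{Conclusion.} Once the chain is established, the three half-open pieces $[a_{N+1},\sigma^1+r_{lo})$, $[\sigma^1+r_{lo},r_{hi}]$ and $(r_{hi},b_{N+1}]$ are consecutive. The ordering guarantees that they are nonempty or degenerate in the right way and that their union is exactly $[a_{N+1},b_{N+1}]\cap\Z$, which gives the ``consequently'' clause.
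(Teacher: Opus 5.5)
Your proof is correct and follows the paper's route. The same rounding sandwiches reduce the four outer seams to $\sigma^1(N)\gg q$ (via Bertrand and $P_{N-1}\ge P_9$) and to $\sigma^1(N)\ge 6/5$, and reduce the key seam to $A_N\le q\bigl(\tfrac23 B_N-O(1/P_N)\bigr)$. The only difference is minor: you close the key seam with the crude bounds $A_N\le N/2$, $q\ge N+2$ and $B_N>0.95$ from Proposition~\ref{prop:overlap}, instead of the paper's estimate $A_N/B_N<1.63$ with $q\ge 31$. This gives a smaller margin than the paper's $17P_N$, but that margin is never needed.
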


\begin{proof}
Write $q=p_{N+1}$, $\sigma^1=\sigma^1(N)$, $\sigma^2=\sigma^2(N)$, and recall
$\sigma^2(N+1)=\sigma^1+q\,\sigma^2$, $a_M=\lceil\sigma^2(M)/6\rceil$,
$b_M=\lfloor 5\sigma^2(M)/6\rfloor$, $r_{lo}=q\,a_N$, $r_{hi}=q\,b_N$.  We use two
elementary facts valid for $N\ge 10$:
\begin{itemize}[leftmargin=1.5em]
\item[(P1)] $\sigma^1\ge\tfrac65$ (indeed $\sigma^1\ge\sigma^1(10)\approx 9.9\times10^9$);
\item[(P2)] $\sigma^1>6q$.  For $\sigma^1/q=P_NA_N/p_{N+1}$, Bertrand's postulate
$p_{N+1}<2p_N$ and $P_N=p_N P_{N-1}$ give $P_N/p_{N+1}>P_{N-1}/2$, so with
$A_N\ge A_{10}>1.53$ and $P_{N-1}\ge P_9=223{,}092{,}870$ we get
$\sigma^1/q>1.53\cdot P_9/2>10^{8}\gg 6$.
\end{itemize}

\emph{$r_{lo}<a_{N+1}$.}  Using $\lceil t\rceil\ge t$ and $\lceil t\rceil\le t+1$,
\[
a_{N+1}=\Bigl\lceil\tfrac{\sigma^1+q\sigma^2}{6}\Bigr\rceil\ge\tfrac{\sigma^1}6+\tfrac{q\sigma^2}6,
\qquad
r_{lo}=q\bigl\lceil\tfrac{\sigma^2}6\bigr\rceil\le\tfrac{q\sigma^2}6+q,
\]
so $a_{N+1}-r_{lo}\ge\tfrac16\sigma^1-q>0$ by (P2).

\emph{$a_{N+1}\le\sigma^1+r_{lo}$.}  Here
$a_{N+1}\le\tfrac{\sigma^1}6+\tfrac{q\sigma^2}6+1$ and
$\sigma^1+r_{lo}\ge\sigma^1+\tfrac{q\sigma^2}6$, so the difference is
$\ge\tfrac56\sigma^1-1>0$ by (P1).

\emph{$\sigma^1+r_{lo}\le r_{hi}$ (the key seam).}  Now
$r_{hi}-r_{lo}=q(b_N-a_N)\ge q\bigl(\tfrac56\sigma^2-1-\tfrac16\sigma^2-1\bigr)
=q\bigl(\tfrac23\sigma^2-2\bigr)$, so it suffices that
$\sigma^1\le q(\tfrac23\sigma^2-2)$; dividing by $P_N$ this is
\[
A_N\ \le\ \tfrac23 q\,B_N-\tfrac{2q}{P_N}.
\]
Since $2B_N=A_N^2-S_N$ with $S_N:=\sum_{i\le N}p_i^{-2}<\sum_p p^{-2}<0.4523$ and
$A_N\ge A_{10}>1.53$,
\[
\frac{A_N}{B_N}=\frac{2}{A_N-S_N/A_N}\le\frac{2}{1.53-0.4523/1.53}<1.63 .
\]
As $q\ge p_{11}=31$, we get
$\tfrac23 qB_N-A_N\ge(\tfrac23\cdot31-1.63)B_N>18B_N>18B_{10}>17$, which dwarfs
$2q/P_N<2\cdot31/P_{10}<10^{-7}$.  Hence $r_{hi}-r_{lo}-\sigma^1>17P_N>0$, which
in particular gives $r_{hi}-r_{lo}\ge\sigma^1$ (used in
Section~\ref{sec:step}).

\emph{$r_{hi}\le b_{N+1}$.}  Symmetrically
$r_{hi}=q\lfloor 5\sigma^2/6\rfloor\le\tfrac56 q\sigma^2$ and
$b_{N+1}\ge\tfrac56\sigma^1+\tfrac56 q\sigma^2-1$, so
$b_{N+1}-r_{hi}\ge\tfrac56\sigma^1-1>0$ by (P1).

\emph{$b_{N+1}<r_{hi}+\sigma^1$.}  By definition
$\beta'(N)=\sigma^1-(b_{N+1}-r_{hi})$, so this is $\beta'(N)>0$, which follows from
Lemma~\ref{lem:Bprime} and (P2): $\beta'(N)\ge\tfrac16\sigma^1-q>0$.

The displayed bounds partition $[a_{N+1},b_{N+1}]$ into the bottom strip
$[a_{N+1},\sigma^1+r_{lo})$, the full-feed block $[\sigma^1+r_{lo},r_{hi}]$, and
the top strip $(r_{hi},b_{N+1}]$.
\end{proof}

\begin{proposition}[reduction of the step]\label{prop:step-reduce}
Fix $N\ge 10$ and assume $L^2(N)\supseteq[a_N,b_N]\cap\Z$.  If
\begin{enumerate}[label=\textup{(\roman*)}]
\item the feed sums $L^1(N)$ are residue-complete modulo $q=p_{N+1}$, and
\item $Y_0(N)\le\min\{\beta(N),\beta'(N)\}$; by \textup{(LB)} it suffices that
$Y_0(N)\le\tfrac16\sigma^1(N)-q$,
\end{enumerate}
then $L^2(N+1)\supseteq[a_{N+1},b_{N+1}]\cap\Z$.
\end{proposition}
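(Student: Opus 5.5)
We fix an integer $x\in[a_{N+1},b_{N+1}]$ and show $x\in L^2(N+1)$ by the covering criterion from the reformulation. By (S), it is enough to find $u\in L^1(N)$ with $u\equiv x\pmod q$ and $u\in W(x)=[x-r_{hi},x-r_{lo}]\cap[0,\sigma^1(N)]$. Then $x-u$ is a multiple of $q$, say $x-u=qv$, with $v\in[a_N,b_N]\cap\Z\subseteq L^2(N)$ by the induction hypothesis. Hence $x=u+qv\in L^1(N)+qL^2(N)=L^2(N+1)$. By Lemma~\ref{lem:seam} the chain (Seam) holds, so $x$ falls into exactly one of the three pieces, and we treat them in turn.

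\emph{Full-feed block} $x\in[\sigma^1(N)+r_{lo},r_{hi}]$. Here $x-r_{hi}\le 0$ and $x-r_{lo}\ge\sigma^1(N)$, so $W(x)=[0,\sigma^1(N)]\supseteq L^1(N)$. Hypothesis (i) supplies $u\in L^1(N)$ in the residue class of $x$.

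\emph{Bottom strip} $x\in[a_{N+1},\sigma^1(N)+r_{lo})$. By (Seam), $x-r_{hi}<\sigma^1(N)+r_{lo}-r_{hi}\le 0$, and also $x-r_{lo}\ge a_{N+1}-r_{lo}=\beta(N)$. Hence $W(x)\supseteq[0,\beta(N)]\supseteq[0,Y_0(N)]$ by (ii). By Definition~\ref{def:Y0}, $L^1(N)\cap[0,Y_0(N)]$ is residue-complete mod $q$, which gives the required $u$. Note that $\beta(N)\ge 0$ here because $r_{lo}<a_{N+1}$.

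\emph{Top strip} $x\in(r_{hi},b_{N+1}]$. Now $x-r_{lo}>r_{hi}-r_{lo}\ge\sigma^1(N)$, and $x-r_{hi}\le b_{N+1}-r_{hi}=\sigma^1(N)-\beta'(N)$. So $W(x)\supseteq[\sigma^1(N)-\beta'(N),\sigma^1(N)]\supseteq[\sigma^1(N)-Y_0(N),\sigma^1(N)]$, and the last interval has nonnegative left end since $Y_0(N)\le\beta'(N)<\sigma^1(N)$. We then use the complementation symmetry $u\mapsto\sigma^1(N)-u$ of $L^1(N)$. Choose $u'\in L^1(N)\cap[0,Y_0(N)]$ with $u'\equiv\sigma^1(N)-x\pmod q$, which exists by the definition of $Y_0(N)$. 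Then $u=\sigma^1(N)-u'$ lies in $L^1(N)\cap[\sigma^1(N)-Y_0(N),\sigma^1(N)]$ and satisfies $u\equiv x\pmod q$.

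The sufficient form of (ii), namely $Y_0(N)\le\tfrac16\sigma^1(N)-q$, implies $Y_0(N)\le\min\{\beta(N),\beta'(N)\}$ by (LB), i.e.\ Lemmas~\ref{lem:Bbound} and~\ref{lem:Bprime}. The three pieces cover the whole block, which completes the step. The only delicate point is the endpoint bookkeeping: we must check that the containments $W(x)\supseteq[0,\beta(N)]$ and $W(x)\supseteq[\sigma^1(N)-\beta'(N),\sigma^1(N)]$ really follow from the strict and non-strict inequalities of (Seam), and that the window length $r_{hi}-r_{lo}\ge\sigma^1(N)$ from Lemma~\ref{lem:seam} is what forces the other end of the window past the feed range. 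Everything else is the definition of $Y_0$ together with the symmetry of $L^1(N)$.
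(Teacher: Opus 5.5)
Your proof is correct and follows the paper's own argument. The paper uses the same ingredients: the covering criterion from (S), the three-piece partition of $[a_{N+1},b_{N+1}]$ given by (Seam) in Lemma~\ref{lem:seam}, hypothesis (i) on the full-feed block, $Y_0(N)\le\beta(N)$ on the bottom strip, and the complementation symmetry $u\mapsto\sigma^1(N)-u$ with $Y_0(N)\le\beta'(N)$ on the top strip. Your endpoint checks make explicit what the paper's discussion before the proposition leaves implicit.
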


It remains to establish (i) and (ii) for all $N\ge 10$.  Ingredient (i) is
Fact~\ref{fact:olson-feed}; ingredient (ii) is the analytic heart, proved in
Section~\ref{sec:analytic}.

\subsection{Ingredient (i): residue-completeness of the feed}
\begin{fact}[feed residue-completeness]\label{fact:olson-feed}
For every $N\ge 10$ the feed sums $L^1(N)=P(D_1(N))$ are residue-complete modulo
$q=p_{N+1}$; that is, $L^1(N)\bmod q=\Z_q$.
\end{fact}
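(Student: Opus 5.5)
The plan is to apply Olson's addition theorem to the residues of the $N$ feed atoms $P_N/p_i$ modulo $q=p_{N+1}$. No atom is divisible by $q$, since $P_N$ is a product of primes all smaller than $q$. So each atom gives a nonzero residue $x_i\equiv P_N/p_i \pmod q$. Subset sums of the atoms reduce to subset sums of the multiset $\{x_1,\dots,x_N\}$ in $\Z_q$, so it suffices to show that these $N$ residues have a residue-complete subset-sum set.

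The first step is to reduce to distinct residues. The $x_i$ are pairwise distinct mod $q$: if $P_N/p_i\equiv P_N/p_j$, multiply both sides by $p_ip_j$ (invertible mod $q$) to get $P_N p_j\equiv P_N p_i$. Cancelling $P_N$ gives $p_i\equiv p_j\pmod q$, which is impossible since $p_i,p_j<q$ are distinct. We therefore have a set of $N$ distinct nonzero elements of $\Z_q$.

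The second step invokes Olson's theorem in the form: if $S\subseteq\Z_q\setminus\{0\}$ with $q$ prime and $|S|>\sqrt{4q-3}$ (or whichever explicit constant the cited version gives), then the subset sums of $S$ cover $\Z_q$. Note that $0$ is always attained by the empty sum. It then remains to check $N>\sqrt{4p_{N+1}-3}$, i.e.\ roughly $p_{N+1}<N^2/4$. I would prove this for $N\ge 14$ with an elementary Chebyshev upper bound such as $p_n<n(\log n+\log\log n)$ for $n\ge 6$ (Rosser), or even a cruder explicit bound, and verify directly the first few $N$ where the asymptotic bound is not yet sharp enough. The main obstacle is exactly this threshold. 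At $N=10$ we have $q=31$ and need $10>\sqrt{121}=11$, which fails, and similarly for $N=11,12,13$ ($q=37,41,43$), since $\sqrt{4q-3}$ exceeds $N$ there. So Olson cannot handle $10\le N\le 13$, consistent with the paper's remark.

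For these four values I would do an exact finite computation. Compute the residues $x_i$ modulo $q$ and build the reachable-residue set by the standard dynamic program $R\leftarrow R\cup(R+x_i)$ over $\Z_q$, starting from $R=\{0\}$, then confirm $R=\Z_q$. This is a trivial computation with $q\le 43$. One could also argue by hand with Cauchy--Davenport on the subset sums $\{0,x_i\}$: $N$ summands of size $2$ give $|R|\ge\min(q,N+1)$, which is insufficient, so the computation is genuinely needed. For $N\ge 14$ I would cross-check the Chebyshev inequality $p_{N+1}\le (N^2+3)/4$ numerically at its smallest instances ($N=14$: $p_{15}=47$, $\sqrt{185}\approx13.6<14$) before appealing to the analytic bound for large $N$. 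Together these cover every $N\ge 10$.
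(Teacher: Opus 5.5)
Your proposal is correct and matches the paper's proof step for step. Both arguments use the same chain:
\begin{itemize}
\item the atoms reduce to $N$ distinct nonzero residues $P_N p_i^{-1}\bmod q$;
\item Olson's theorem applies once $p_{N+1}<(N^2+3)/4$;
\item an exact residue computation settles the four Olson-failing cases $N=10,11,12,13$.
\end{itemize}
The only difference is the prime bound: the paper checks $14\le N\le 27$ directly and uses $p_{N+1}<2(N+1)\log(N+1)$ for $N\ge 28$, whereas you would use a sharper Rosser-type bound.

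One small slip: your numerical cross-check should use the strict inequality $p_{N+1}<(N^2+3)/4$, since Olson's hypothesis is strict and equality occurs at $N=13$ ($p_{14}=43$). This is harmless here, because $N=13$ already falls in your computed range.
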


\begin{proof}
The atoms $D_1(N)=\{P_N/p_i:1\le i\le N\}$ reduce modulo $q=p_{N+1}$ to
$N$ \emph{distinct, nonzero} residues: nonzero because $q$ exceeds
$p_1,\dots,p_N$ and does not divide $P_N/p_i$; distinct because $P_N$ is a unit
mod $q$ and the inverses $p_i^{-1}\bmod q$ are distinct, so
$P_N/p_i\equiv P_N\,p_i^{-1}$ are distinct.  By Olson's theorem
(Theorem~\ref{thm:olson}) these $N$ residues have $P(D_1(N)\bmod q)=\Z_q$ whenever
$q=p_{N+1}<(N^2+3)/4$.  The actual primes satisfy this for all $N\ge 14$: directly
for $14\le N\le 27$, and for $N\ge 28$ by the elementary bound
$p_{N+1}<2(N+1)\log(N+1)<(N^2+3)/4$ (the second inequality first holds at $N=28$,
where $2\cdot 29\log 29=195.3<196.75$, and then for all larger $N$ as the gap is
increasing).  Olson's hypothesis \emph{fails} only for $N\le 13$ (at $N=13$,
$p_{14}=43=(13^2+3)/4$, not strictly less); among these the proof uses only
$N\ge 10$, and the four cases $N=10,11,12,13$ ($q\in\{31,37,41,43\}$) are settled
by exact subset-sum computation (Section~\ref{sec:comp}).
\end{proof}

\begin{theorem}[Olson \cite{Olson1968}; BEG15 Theorem 2]\label{thm:olson}
Let $X$ be a set of distinct nonzero residues modulo a prime $q$.  If
$q<(|X|^2+3)/4$, then $P(X)=\Z_q$.
\end{theorem}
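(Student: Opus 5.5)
The plan is to deduce the statement from a lower bound on \emph{restricted} $h$-fold sumsets, with $h$ chosen as close to $|X|/2$ as possible. Write $n=|X|$ and, for $1\le h\le n$, let $h^{\wedge}X$ be the set of sums of $h$ \emph{distinct} elements of $X$. Clearly $h^{\wedge}X\subseteq P(X)$. The restricted-sumset form of the Erd\H{o}s--Heilbronn conjecture, proved by Dias da Silva and Hamidoune, states that for a prime $q$ and a set $X$ of distinct residues modulo $q$,
\[
|h^{\wedge}X|\ \ge\ \min\{\,q,\ h(n-h)+1\,\}.
\]
This holds uniformly in $h$. In particular the hypothesis that the residues are nonzero is not even needed, so the statement as given is slightly weaker than what this route yields.

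Choose $h=\lfloor n/2\rfloor$. Then $h(n-h)=\lfloor n^2/4\rfloor\ge (n^2-1)/4$, so
\[
h(n-h)+1\ \ge\ \frac{n^2+3}{4}\ >\ q .
\]
Hence the minimum equals $q$, so $h^{\wedge}X=\Z_q$, and therefore $P(X)=\Z_q$. The parity cases are handled by the floor. For $n$ even we have $h(n-h)+1=n^2/4+1>(n^2+3)/4$. For $n$ odd we have $h(n-h)+1=(n^2+3)/4$ exactly. Either way the strict hypothesis $q<(n^2+3)/4$ is precisely what is required.

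The main obstacle is the restricted-sumset lower bound itself, so I will give a self-contained route to it rather than just a citation. I would follow the Alon--Nathanson--Ruzsa polynomial method. Put $X=\{x_1,\dots,x_n\}$, and suppose for contradiction that $|h^{\wedge}X|\le h(n-h)<q$. Enclose $h^{\wedge}X$ in a set $C$ with $|C|=h(n-h)$, and consider
\[
F(t_1,\dots,t_h)=\prod_{i<j}(t_j-t_i)\prod_{c\in C}\Bigl(\sum_i t_i-c\Bigr).
\]
This polynomial vanishes on $X^h$: off the diagonals the sum lies in $C$, and on the diagonals the Vandermonde factor is zero. Its degree is $\binom h2+h(n-h)=\sum_{i=1}^h (n-i)$.

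The key computation is that the coefficient of $\prod_i t_i^{\,n-i}$ in $F$ equals
\[
\frac{(h(n-h))!}{\prod_{i=1}^{h}(n-i)!}\prod_{i<j}(j-i),
\]
which is a ballot-number identity. This integer is nonzero mod $q$ because $h(n-h)<q$. The Combinatorial Nullstellensatz, applied with $|X|=n>n-i$ in each coordinate, then contradicts the vanishing of $F$ on $X^h$. I expect verifying that coefficient identity, and checking that no factor of $q$ divides it, to be the only delicate step.

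An alternative is Olson's original incremental argument, which adds elements one at a time and controls the growth of $|P(\cdot)|$ by a Cauchy--Davenport-type lemma. That route is more elementary in tools but messier in bookkeeping, so I would fall back to it only if the polynomial route stalled.
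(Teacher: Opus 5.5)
The paper gives no proof of this statement. It is quoted from Olson via BEG15 (Theorem~2), and in the Lean companion it is one of the two axioms. Your route is therefore genuinely different, and as a reduction it is correct. Granting the Dias da Silva--Hamidoune bound $|h^{\wedge}X|\ge\min\{q,\,h(n-h)+1\}$, the choice $h=\lfloor n/2\rfloor$ gives $h^{\wedge}X=\Z_q\subseteq P(X)$.

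This buys more than the stated theorem.
\begin{itemize}
\item Besides dropping ``nonzero'', it needs only $q\le\lfloor n^2/4\rfloor+1$. So the strict inequality is not ``precisely what is required'': for odd $n$ the equality case $q=(n^2+3)/4$ is allowed, which is exactly the paper's $N=13$, $q=43$.
\item Applying the bound to $X\cup\{0\}$, whose restricted $h$-fold sums still lie in $P(X)$, shows that $n$ distinct nonzero residues give $P(X)=\Z_q$ once $q\le\lfloor (n+1)^2/4\rfloor+1$. For $n=10,11,12,13$ this reads $q\le 31,37,43,50$. That covers all four cases $q\in\{31,37,41,43\}$ that Fact~\ref{fact:olson-feed} settles by computation.
\end{itemize}

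Your self-contained polynomial argument, however, proves the wrong branch of the bound. You assume $|h^{\wedge}X|\le h(n-h)<q$ and take $|C|=h(n-h)$. But the hypothesis $4q<n^2+3$ forces $h(n-h)=\lfloor n^2/4\rfloor\ge q$; check $n$ even and $n$ odd modulo $4$. So this regime never occurs in the application, and it fails in two ways:
\begin{itemize}
\item No proper enclosure $C\subsetneq\Z_q$ of size $h(n-h)$ exists. At $|C|=q$ the factor $\prod_{c}(\sum_i t_i-c)=(\sum_i t_i)^q-\sum_i t_i$ vanishes on all of $\Z_q^h$, so there is no contradiction to be had.
\item $(h(n-h))!\equiv 0\pmod q$, so the coefficient of $\prod_i t_i^{n-i}$ is no longer certified nonzero.
\end{itemize}
Your sketch only handles the branch where the minimum is $h(n-h)+1$, whereas Olson needs the branch where it is $q$.

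The standard repair is the reason Alon, Nathanson and Ruzsa state their theorem for sets of distinct sizes.
\begin{itemize}
\item Assume $|h^{\wedge}X|\le q-1$ and take $C\supseteq h^{\wedge}X$ with $|C|=q-1$.
\item Aim at a lower monomial $\prod_i t_i^{e_i}$ with $n-1\ge e_1>\dots>e_h\ge 0$ and $\sum_i e_i=\binom{h}{2}+q-1$. Such exponents exist because $q-1<h(n-h)$, and sums of strictly decreasing tuples in $[0,n-1]$ take every value from $\binom{h}{2}$ to $\binom{h}{2}+h(n-h)$.
\item The same Vandermonde computation gives the coefficient $\pm\frac{(q-1)!}{\prod_i e_i!}\prod_{i<j}(e_i-e_j)$. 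It is not divisible by $q$: multiplying it by $\prod_i e_i!$ gives $\pm(q-1)!\prod_{i<j}(e_i-e_j)$, a product of integers in $[1,q-1]$, since $e_i-e_j\le n-1<q$.
\item The Combinatorial Nullstellensatz with $|X|=n>e_i$ then gives the contradiction.
\end{itemize}
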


Equivalently (the form we use below), a set of more than $\sqrt{4q-3}$ distinct
nonzero residues modulo $q$ has subset sums equal to all of $\Z_q$.

\section{The analytic heart: \texorpdfstring{$Y_0(N)\le\beta(N)$}{Y0(N) <= beta(N)}}
\label{sec:analytic}

\begin{definition}\label{def:Y0}
The \emph{residue-completeness onset radius} of the feed is
\[
Y_0(N):=\max_{r\in\Z_q}\ \min\bigl\{\textstyle\sum S:\ S\subseteq D_1(N),\
\textstyle\sum S\equiv r\!\!\pmod q\bigr\},\qquad q=p_{N+1},
\]
i.e.\ the smallest $y$ such that $L^1(N)\cap[0,y]$ meets every residue class
modulo $q$.  (By Fact~\ref{fact:olson-feed} the inner minima are finite, so
$Y_0(N)$ is well defined.)  Recall from \S\ref{sec:step} the two strip lengths
\[
\beta(N):=a_{N+1}-q\,a_N
=\bigl\lceil\tfrac16\sigma^2(N+1)\bigr\rceil
-q\bigl\lceil\tfrac16\sigma^2(N)\bigr\rceil ,
\qquad
\beta'(N):=\sigma^1(N)-b_{N+1}+q\,b_N .
\]
\end{definition}

\begin{lemma}\label{lem:Bbound}
$\beta(N)\ \ge\ \dfrac{\sigma^1(N)}{6}-q$.
\end{lemma}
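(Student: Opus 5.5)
This bound has already been obtained inside the proof of Lemma~\ref{lem:seam} (the step ``$r_{lo}<a_{N+1}$''), so I will make that computation explicit. It uses only the recursion $\sigma^2(N+1)=\sigma^1(N)+q\,\sigma^2(N)$ from (S) and the elementary rounding bounds $t\le\lceil t\rceil\le t+1$. I will estimate $a_{N+1}$ from below and $r_{lo}=q\,a_N$ from above, then subtract.

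\emph{Lower bound for $a_{N+1}$.} By the summed form of the splitting recursion, $a_{N+1}=\bigl\lceil\tfrac16\sigma^2(N+1)\bigr\rceil=\bigl\lceil\tfrac16\sigma^1(N)+\tfrac16 q\,\sigma^2(N)\bigr\rceil$, and $\lceil t\rceil\ge t$ gives
\[
a_{N+1}\ \ge\ \tfrac16\sigma^1(N)+\tfrac16 q\,\sigma^2(N).
\]

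\emph{Upper bound for $q\,a_N$.} Since $\lceil t\rceil\le t+1$ (in fact $\lceil t\rceil< t+1$), we get $a_N\le\tfrac16\sigma^2(N)+1$. Multiplying by $q>0$,
\[
q\,a_N\ \le\ \tfrac16 q\,\sigma^2(N)+q.
\]

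Subtracting, the terms $\tfrac16 q\,\sigma^2(N)$ cancel, leaving
\[
\beta(N)=a_{N+1}-q\,a_N\ \ge\ \tfrac16\sigma^1(N)-q,
\]
which is the claim. No hypothesis on $N$ is needed beyond the recursion, which holds for all $N\ge1$.

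The only point needing care is applying the two ceilings in opposite directions, down for $a_{N+1}$ and up for $a_N$. The loss of $q$ comes entirely from amplifying the rounding error of $a_N$ by the factor $q$. The bound can be sharpened to $\lceil\tfrac16\sigma^1(N)\rceil-q+\tfrac{q}{6}$ by using that $\sigma^2(N)$ is an integer, so that $\lceil\sigma^2/6\rceil\le(\sigma^2+5)/6$. This refinement is not needed, since (LB) only requires the stated form.
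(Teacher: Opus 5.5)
Your proof is correct and is the paper's argument: you write $a_{N+1}=\bigl\lceil\tfrac16(\sigma^1(N)+q\,\sigma^2(N))\bigr\rceil\ge\tfrac16\sigma^1(N)+\tfrac q6\sigma^2(N)$ and $q\,a_N\le\tfrac q6\sigma^2(N)+q$, and the $\tfrac q6\sigma^2(N)$ terms cancel.

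One caution about your unused closing remark. Integrality of $\sigma^2(N)$ only yields $\beta(N)\ge\tfrac16\sigma^1(N)-\tfrac56 q$, not the version with $\lceil\tfrac16\sigma^1(N)\rceil$, because $\lceil x+y\rceil\ge\lceil x\rceil+y$ fails for non-integer $y=\tfrac q6\sigma^2(N)$. The ceiling version is in fact false: at $N=56$ one has $\sigma^1\equiv\sigma^2\equiv1\pmod 6$ and $q=269$, which gives $\beta(56)=\tfrac16\sigma^1(56)-\tfrac56 q$ exactly.
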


\begin{proof}
Recall $\beta(N)=\lceil\tfrac16\sigma^2(N+1)\rceil-q\lceil\tfrac16\sigma^2(N)\rceil$
and $\sigma^2(N+1)=\sigma^1(N)+q\,\sigma^2(N)$.  Bound the two terms separately.
For the first, $\lceil t\rceil\ge t$ gives
\[
\Bigl\lceil\tfrac16\sigma^2(N+1)\Bigr\rceil\ \ge\ \tfrac16\sigma^2(N+1)
\ =\ \tfrac16\sigma^1(N)+\tfrac{q}{6}\sigma^2(N).
\]
For the second, $\lceil t\rceil\le t+1$ gives
\[
q\Bigl\lceil\tfrac16\sigma^2(N)\Bigr\rceil\ \le\ q\Bigl(\tfrac16\sigma^2(N)+1\Bigr)
\ =\ \tfrac{q}{6}\sigma^2(N)+q.
\]
Subtracting, the $\tfrac{q}{6}\sigma^2(N)$ terms cancel and
$\beta(N)\ge\tfrac16\sigma^1(N)-q$.
\end{proof}

\begin{lemma}\label{lem:Bprime}
$\beta'(N)\ \ge\ \dfrac{\sigma^1(N)}{6}-q$.
\end{lemma}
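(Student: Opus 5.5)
The plan mirrors the proof of Lemma~\ref{lem:Bbound}, with floors in place of ceilings. By Definition~\ref{def:Y0},
\[
\beta'(N)=\sigma^1(N)-b_{N+1}+q\,b_N,\qquad b_M=\bigl\lfloor\tfrac56\sigma^2(M)\bigr\rfloor,
\]
and we use the splitting identity $\sigma^2(N+1)=\sigma^1(N)+q\,\sigma^2(N)$ from \S\ref{sec:step}. The idea is to bound the two floor terms in opposite directions, so that the large terms $\tfrac56 q\,\sigma^2(N)$ cancel exactly as the $\tfrac q6\sigma^2(N)$ terms did before.

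\textbf{Step 1 (upper bound on $b_{N+1}$).} From $\lfloor t\rfloor\le t$,
\[
b_{N+1}\le\tfrac56\sigma^2(N+1)=\tfrac56\sigma^1(N)+\tfrac56 q\,\sigma^2(N).
\]

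\textbf{Step 2 (lower bound on $q\,b_N$).} From $\lfloor t\rfloor\ge t-1$ and $q>0$,
\[
q\,b_N\ge q\bigl(\tfrac56\sigma^2(N)-1\bigr)=\tfrac56 q\,\sigma^2(N)-q.
\]

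\textbf{Step 3 (combine).} Substituting both bounds into $\beta'(N)$ gives
\[
\beta'(N)\ \ge\ \sigma^1(N)-\tfrac56\sigma^1(N)-\tfrac56 q\,\sigma^2(N)+\tfrac56 q\,\sigma^2(N)-q\ =\ \tfrac16\sigma^1(N)-q,
\]
which is the claim.

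The only point needing care is the direction of each rounding estimate. Since $b_{N+1}$ enters $\beta'(N)$ with a minus sign, it must be bounded above; since $q\,b_N$ enters with a plus sign, it must be bounded below. With these directions there is no real obstacle. Nothing beyond $q>0$ and the recursion for $\sigma^2$ is used, so the bound holds for every $N$, not only $N\ge 10$.
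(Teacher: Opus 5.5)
Your proof is correct and is essentially the paper's own argument: bound $b_{N+1}$ above by $\tfrac56\sigma^1(N)+\tfrac{5q}{6}\sigma^2(N)$ and $q\,b_N$ below by $\tfrac{5q}{6}\sigma^2(N)-q$, so the $\tfrac{5q}{6}\sigma^2(N)$ terms cancel. Your remark that only $q>0$ and the identity $\sigma^2(N+1)=\sigma^1(N)+q\,\sigma^2(N)$ are used, so the bound holds for every $N$, is also accurate.
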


\begin{proof}
With $\sigma^2(N+1)=\sigma^1(N)+q\,\sigma^2(N)$ we have
$b_{N+1}\le\frac56\sigma^1(N)+\frac{5q}{6}\sigma^2(N)$, while
$q\,b_N\ge\frac{5q}{6}\sigma^2(N)-q$.  Hence
$\beta'(N)=\sigma^1(N)-b_{N+1}+q\,b_N\ge\frac16\sigma^1(N)-q$.
\end{proof}

\begin{theorem}\label{thm:Y0}
For all $N\ge 10$, $\ Y_0(N)\le\min\{\beta(N),\beta'(N)\}$.  Indeed for all $N\ge 300$
the stronger bound $Y_0(N)\le\tfrac16\sigma^1(N)-q$ holds, which by
Lemmas~\ref{lem:Bbound}--\ref{lem:Bprime} implies the former; the finite range
$10\le N\le 299$ is checked exactly.
\end{theorem}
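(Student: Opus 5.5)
The plan splits along the statement: an explicit Olson-based construction bounds $Y_0(N)$ from above for $N\ge300$, and a residue-wise shortest-path computation handles $10\le N\le299$.

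\textbf{Large range, $N\ge300$: the upper bound on $Y_0(N)$.} I would build small feed sums covering every residue directly from the \emph{smallest} atoms of $D_1(N)$. These are $P_N/p_i$ with $p_i$ largest. Fix
\[
k:=\bigl\lfloor\sqrt{4q-3}\bigr\rfloor+1,
\]
and let $X=\{P_N/p_i: N-k+1\le i\le N\}$. As in the proof of Fact~\ref{fact:olson-feed}, the elements of $X$ are distinct nonzero residues modulo $q$. Since $|X|=k>\sqrt{4q-3}$, Olson's theorem (Theorem~\ref{thm:olson}, in its equivalent form) gives $P(X)=\Z_q$. Every residue class is therefore hit by a subset sum of $X$, and each such sum is at most $\sum X$. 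Hence
\[
Y_0(N)\ \le\ \sum X\ \le\ \frac{k\,P_N}{p_{N-k+1}} .
\]
It then suffices to prove
\[
\frac{k}{p_{N-k+1}}+\frac{q}{P_N}\ \le\ \frac{A_N}{6}\qquad(N\ge300).
\]

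\textbf{Verifying this inequality.} The term $q/P_N$ is astronomically small and can be absorbed into any slack. Since $A_N$ is increasing, $A_N\ge A_{300}$, which is a single exact rational (about $2.3$). The left side therefore only needs to be at most, say, $0.35$.

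For the left side I would use elementary Chebyshev-type bounds of the form $n\log n\le p_n\le 2n\log n$ for $n\ge 2$, or any cleanly citable version. These give $q\le 2(N+1)\log(N+1)$, so
\[
k\le 2\sqrt{2(N+1)\log(N+1)}+1 .
\]
This is far below $N/2$ for $N\ge300$. Consequently $p_{N-k+1}\ge p_{\lceil N/2\rceil}\ge \tfrac N2\log\tfrac N2$, and
\[
\frac{k}{p_{N-k+1}}\ \lesssim\ \frac{4\sqrt{2(N+1)\log(N+1)}+2}{N\log(N/2)} .
\]
This bound is decreasing in $N$ and already about $0.1$ at $N=300$, leaving a large margin.

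\textbf{Finite range, $10\le N\le299$.} Here I would compute $Y_0(N)$ exactly. This is a shortest-path / min-knapsack over $\Z_q$: process the atoms $P_N/p_i$ one at a time, maintaining for each residue $r$ the minimal subset sum in class $r$. Each atom may be used at most once, which the standard 0/1 update on a copy of the table enforces. All quantities are exact big integers. I would then compare $\max_r$ of the table with the exactly computed $\beta(N)$ and $\beta'(N)$. The cost is $O(Nq)$ per $N$, which is trivial. The same computation independently confirms the residue-completeness cases $N=10,\dots,13$.

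\textbf{Main obstacle.} The only delicate point is making the analytic estimate rigorous with fully explicit constants: choosing Chebyshev bounds valid from the required index onward, and checking that $k\le N/2$ and the monotonicity of the bounding function hold from $N=300$ on. If the crude bound $p_{N-k+1}\ge p_{N/2}$ were too lossy near the threshold, I would instead use $p_{N-k+1}\ge (N-k+1)\log(N-k+1)$ directly, or push the computational range higher.
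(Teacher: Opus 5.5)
Your proposal is correct and follows the paper's proof essentially step for step: you compute $Y_0(N)$ exactly by the $\Z_q$ min-knapsack on $10\le N\le 299$, and for $N\ge 300$ you apply Olson to the $\lfloor\sqrt{4q-3}\rfloor+1$ smallest feed atoms, reduce (via Lemmas~\ref{lem:Bbound}--\ref{lem:Bprime}) to $\sum_{i>N-k}1/p_i+q/P_N\le A_N/6$, and close with Chebyshev-type bounds and $A_N\ge A_{300}$. The only difference is cosmetic: you use the cruder index bound $N-k+1\ge N/2$ together with Rosser's $p_n>n\log n$, whereas the paper uses $p_{N-c_0}>\tfrac12(N-g(N))\log(N-g(N))$. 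Also, your bound at $N=300$ is about $0.16$ rather than $0.1$ (about $0.31$ if you use only $p_n>\tfrac12 n\log n$), which still sits safely below $A_{300}/6\approx0.382$.
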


We prove this in two regimes: an exact finite range and an elementary analytic
tail.

\subsection{Exact range \texorpdfstring{$10\le N\le 299$}{10 to 299}}
For each such $N$ the quantity $Y_0(N)$ is computed \emph{exactly} by a
Dijkstra-style shortest-path computation over the residue group $\Z_q$: maintain,
for each residue $r$, the minimal subset sum of $D_1(N)$ achieving $r$, updating
atom by atom.  The state space has size $q=p_{N+1}\le p_{300}=1{,}987$ in this
range, so the computation is polynomial (no exponential enumeration) and uses
exact integers.  One then checks $Y_0(N)\le\min\{\beta(N),\beta'(N)\}$ directly (both
strip lengths are computed exactly from the primorials).  All $290$ values pass;
see Section~\ref{sec:comp}.  (We note $Y_0(9)>\beta(9)$, consistent with
Remark~\ref{rem:base9}.)

\subsection{Elementary analytic tail \texorpdfstring{$N\ge 300$}{N >= 300}}
\label{ssec:tail}
Let $q=p_{N+1}$ and put $c_0:=\bigl\lfloor\sqrt{4q-3}\,\bigr\rfloor+1$, so that
$q<(c_0^2+3)/4$ strictly.\footnote{The Lean companion
(Appendix~\ref{app:lean}) closes the same tail from the sharper Rosser bound
\texttt{rs\_lower}, which lowers the crossover to $N\ge512$ and splits the finite head
into base, mid, and tail ranges; the theorem proved is identical --- only the threshold
and the finite/analytic partition differ.}  Let $C$ denote the $c_0$ \emph{smallest} atoms of the
feed, $C=\{P_N/p_i:\ i=N-c_0+1,\dots,N\}$.  Their residues mod $q$ are distinct
and nonzero (as in Fact~\ref{fact:olson-feed}), and $|C|=c_0$ with
$q<(c_0^2+3)/4$; for $N\ge 300$ one has $c_0\le N$, so $C\subseteq D_1(N)$.  By
Olson's theorem $P(C)=\Z_q$, so every residue is achieved by a subset of $C$,
whose total is at most $\sigma(C):=\sum_{i=N-c_0+1}^N P_N/p_i$.  Therefore
\[
Y_0(N)\ \le\ \sigma(C).
\tag{O}
\]
Dividing (O) by $P_N$, the desired bound $\sigma(C)\le\tfrac16\sigma^1(N)-q$ is
equivalent to
\[
\Sigma(N)\ :=\ \sum_{i=N-c_0+1}^{N}\frac{1}{p_i}\ +\ \frac{q}{P_N}
\ \le\ \frac{A_N}{6}.
\tag{Tail}
\]
We close (Tail) for all $N\ge 300$ using only the elementary Chebyshev prime bounds
\[
L(k):=\tfrac12 k\log k\ <\ p_k\ <\ 2k\log k=:U(k)\qquad(k\ge 6),
\tag{Ch}
\]
which are provable by the binomial-coefficient method and which we verify
directly for the (few) small indices used below, together with the monotonicity
of $A_N$ (explicit forms of all these prime estimates are classical,
\cite{RosserSchoenfeld1962}).

\begin{lemma}[elementary certified tail]\label{lem:tail-elem}
For all $N\ge 300$, inequality \textup{(Tail)} holds.  Consequently
$\sigma(C)\le\tfrac16\sigma^1(N)-q$ and $Y_0(N)\le\tfrac16\sigma^1(N)-q$.
\end{lemma}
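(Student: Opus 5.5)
The plan is to prove (Tail) with deliberately crude but monotone estimates: the left side of (Tail) is of order $1/\sqrt{N\log N}$, while the right side is bounded below by the constant $A_{300}/6$. All comparisons will be made against that single constant.

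\emph{Step 1: lower-bound the right side.} Since $A_N$ is increasing, it suffices to show $\Sigma(N)\le A_{300}/6$ for every $N\ge300$. I would compute $A_{300}=\sum_{i\le300}1/p_i$ once, exactly and as a rational lower bound. This is a finite check of the same kind as the exact range; Mertens' estimate suggests $A_{300}\approx2.3$. So the target becomes $\Sigma(N)\le 0.38$, and $0.35$ already leaves room.

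\emph{Step 2: control $c_0$ and the block of primes.} By (Ch), $q=p_{N+1}<U(N+1)=2(N+1)\log(N+1)$, so
\[
c_0\le\sqrt{4q}+1<\sqrt{8(N+1)\log(N+1)}+1 .
\]
For $N\ge300$ this is at most $N/2$. At $N=300$ we have $q=1993$ and $c_0=90$, and the ratio $c_0/N$ only decreases afterwards. Hence every index in the block satisfies $i\ge N-c_0+1\ge N/2$. Each term in the block is then at most $1/p_{N-c_0+1}<1/L(\lceil N/2\rceil)$, and so
\[
\sum_{i=N-c_0+1}^{N}\frac1{p_i}\ \le\ \frac{c_0}{L(\lceil N/2\rceil)}
\ \le\ \frac{\sqrt{8(N+1)\log(N+1)}+1}{\tfrac14 N\log(N/2)} .
\]
The right-hand side behaves like $C/\sqrt{N\log N}$ and is eventually decreasing. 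I would verify its monotonicity for $N\ge300$, either by an elementary derivative check on the real-variable extension or by bounding numerator and denominator separately. Then it suffices to evaluate the bound at $N=300$, which gives roughly $0.16$, or about $90/564$ if we use the actual $c_0$ and $p_{211}$ bound.

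\emph{Step 3: the correction term.} The term $q/P_N$ is negligible. We have $P_N\ge2^N$ and $q<2(N+1)\log(N+1)$, so $q/P_N<10^{-80}$ for $N\ge300$. Adding Steps 2 and 3 gives $\Sigma(N)<0.17<A_{300}/6\le A_N/6$, which proves (Tail). The consequences then follow from (O) by multiplying back through by $P_N$.

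\emph{Main obstacle.} The only delicate point is making Step 2 rigorous with elementary tools. Two things are needed there: the monotonicity of the explicit majorant of $c_0/L(N/2)$, and applying (Ch) at the possibly non-integer index $N/2$. The latter is avoided by using $\lceil N/2\rceil$ together with monotonicity of $p_k$. I would first try to split the majorant into a decreasing factor $\sqrt{\log(N+1)}/\log(N/2)$ times $\sqrt{N+1}/N$. If a clean monotonicity proof resists, the fallback is to use the large margin, about $0.17$ against $0.38$. One proves a weaker but obviously decreasing bound, such as replacing $\log(N/2)$ by $\log 150$ in the denominator only where this helps, and accepts the lost constant.
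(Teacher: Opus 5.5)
Your route is essentially the paper's. Both use $A_N\ge A_{300}$ and the Chebyshev majorant $c_0<\sqrt{8(N+1)\log(N+1)}+1=2\sqrt{U(N+1)}+1=:g(N)$. Both bound each block term by $1/L(\text{smallest block index})$ and treat the carry $q/P_N\le U(N+1)/2^N$ as negligible. Both close with a decreasing envelope evaluated at $N=300$. The one difference is the lower bound on the block indices:
\begin{itemize}
\item The paper uses $N-g(N)$. Its envelope $\tau(N)=g(N)/L(N-g(N))$ needs a small derivative argument ($g'(N)<g(N)/N$) to be shown decreasing.
\item You use $g(N)\le N/2$ to replace the indices by $\lceil N/2\rceil$. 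Your envelope $g(N)/L(N/2)$ is a sum of two obviously decreasing terms, which is simpler, but its constant is weaker.
\end{itemize}

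That weaker constant is where your write-up is wrong. Your envelope at $N=300$ is $118.23/(75\log 150)\approx 118.2/375.8\approx 0.315$, not $0.16$. The figure $90/564\approx 0.16$ uses the actual $c_0=90$ and $L(211)$ at the single point $N=300$. That is not the value of any function you have shown to be decreasing, so it cannot be carried to $N>300$. Your conclusion $\Sigma(N)<0.17$ is therefore unsupported.

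What your argument does prove is $\Sigma(N)<0.315<0.3818\le A_N/6$. This still gives (Tail), with a margin of about $0.067$; the paper's sharper envelope gives $0.2507$ and a margin above $0.13$. So the lemma follows once you correct the arithmetic, and no fallback is needed.

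One small fix is also needed in Step 2. The integer ratio $c_0/N$ is not monotone, since it jumps up whenever $c_0$ increments. Justify $c_0\le N/2$ instead through the smooth majorant $g(N)/N$, which is decreasing for $N\ge 300$ and equals about $0.39<1/2$ at $N=300$.
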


\begin{proof}
Write $g(N):=2\sqrt{U(N+1)}+1$.

\emph{Lower bound for the right side.}  $A_N$ is strictly increasing
(Proposition~\ref{prop:overlap}), so $A_N\ge A_{300}$; and
$A_{300}=\sum_{i\le 300}1/p_i=2.2909\ldots$ is a finite sum, giving
$A_N/6\ge A_{300}/6=0.3818\ldots$

\emph{Upper bound for the left side.}  The $c_0$ summands are each $\le1/p_{N-c_0}$,
so the feed tail is $\le c_0/p_{N-c_0}$.  By (Ch), $q=p_{N+1}<U(N+1)$, hence
$c_0\le 2\sqrt q+1<2\sqrt{U(N+1)}+1=g(N)$; and since $L$ is increasing and
$c_0<g(N)$, $p_{N-c_0}>L(N-c_0)\ge L(N-g(N))$.  Thus the feed tail is
$<\tau(N):=g(N)/L(N-g(N))$.  The carry is $q/P_N\le U(N+1)/2^N=:\varepsilon(N)$,
with $\varepsilon(300)<3436/2^{300}<10^{-86}$.  Hence
$\Sigma(N)<\overline\Sigma(N):=\tau(N)+\varepsilon(N)$.

\emph{Monotonicity.}  $\varepsilon$ is decreasing.  For $\tau$, with $D:=N-g(N)$,
\[
  (\log\tau)'=\frac{g'(N)}{g(N)}-\bigl(1-g'(N)\bigr)\frac{L'(D)}{L(D)}
  \ \le\ \frac{g'(N)}{g(N)}-\frac{1-g'(N)}{D},
\]
using $L'(D)/L(D)=(\log D+1)/(D\log D)\ge 1/D$ and $1-g'(N)>0$.  Since $D+g(N)=N$,
the right side is $<0$ exactly when $g'(N)\,(D+g(N))<g(N)$, i.e.\ $g'(N)<g(N)/N$.
From $g(N)=2\sqrt{U(N+1)}+1$ with $U(k)=2k\log k$ one computes
$g'(N)=2(\log(N+1)+1)/\sqrt{U(N+1)}$, which is decreasing; at $N=300$,
$g'(300)=13.41/58.62<0.23$ while $g(300)/300>118/300>0.39$, and the ratio
$g'(N)/(g(N)/N)\to\tfrac12$.  Hence $g'(N)<g(N)/N$ throughout $[300,\infty)$, so
$(\log\tau)'<0$ and $\overline\Sigma$ is decreasing there.

\emph{Base evaluation at $N=300$.}  $U(301)=602\log 301<3436$, so
$g(300)<2\sqrt{3436}+1<118.3$, $D(300)>181.7$, and
$L(D(300))>\tfrac12\cdot181.7\cdot\log 181.7>472$; whence
$\tau(300)<118.3/472<0.2507$, and adding $\varepsilon(300)<10^{-86}$ gives
$\overline\Sigma(300)<0.2507$.

\emph{Conclusion.}  For all $N\ge300$,
\[
  \Sigma(N)<\overline\Sigma(N)\le\overline\Sigma(300)<0.2507<0.3818
  =A_{300}/6\le A_N/6,
\]
which is (Tail), with margin $>0.13$.  Dividing through by $P_N$ in (O) gives
$\sigma(C)\le\tfrac16\sigma^1(N)-q$ and hence $Y_0(N)\le\tfrac16\sigma^1(N)-q$.
\end{proof}

\noindent
Lemma~\ref{lem:tail-elem} for $N\ge300$ together with the exact range
$10\le N\le 299$ proves Theorem~\ref{thm:Y0}.

\subsection{Why a worst-case bound does not suffice}
\label{ssec:worstcase}
The exact finite range cannot be replaced by a crude worst-case bound.  Olson's
threshold $(m^2+3)/4$ is sharp (attained by $\{\pm1,\pm2,\dots\}$), so for
$10\le N\le 13$ the feed lies below the worst-case guarantee yet is
residue-complete because its particular residues $\{P_Np_i^{-1}\bmod q\}$ are far
from worst-case --- a finite, set-specific fact, certified by an explicit
representative per residue.  No equidistribution estimate enters the analytic part
either: for $N\ge300$ the \emph{sum} of the $c_0\approx2\sqrt q$ smallest atoms is
already $\le\beta(N)$ by the Chebyshev/monotonicity bound of \S\ref{ssec:tail}.

\section{Computational verification}\label{sec:comp}

Several ingredients are established by exact finite computation.  All use
exact-integer arithmetic; no floating-point value is load-bearing.  (Floating
point appears only in the monotone-envelope tail of \S\ref{ssec:tail}, where the
relevant inequalities carry margins far exceeding any rounding error.)

\subsection*{Base case at \texorpdfstring{$N_0=10$}{N0=10} (Proposition~\ref{prop:base})}
The set $D_2(10)$ has $\binom{10}{2}=45$ atoms summing to
$\sigma^2(10)=6{,}166{,}988{,}769$.  Subset-sum reachability is computed with a
\emph{packed bitset}: an arbitrary-precision integer $R$ with bit $k$ set iff
some subset sums to $k$; starting from $R=1$, each atom $a$ updates
$R\mathrel{|}{=}(R\ll a)$.  The final $R$ has $\approx 6.17\times10^9$ bits
($\approx 771$\,MB).  One then checks the contiguous segment over the central
block $[1{,}027{,}831{,}462,\,5{,}139{,}157{,}307]$ is entirely set: $0$ integers
missing.  This was independently re-verified by multiple from-scratch
implementations.

\subsection*{The range \texorpdfstring{$10\le N\le 299$}{10..299} for
$Y_0\le\min\{\beta,\beta'\}$}
For each $N$, $Y_0(N)$ is computed exactly by the polynomial Dijkstra over $\Z_q$
of \S\ref{sec:analytic} (state space $\Z_q$, $q=p_{N+1}\le p_{300}=1{,}987$), and
$Y_0(N)\le\min\{\beta(N),\beta'(N)\}$ checked with exact integers.  All $290$ values
pass; the verification is tightest at $N=10$.  Representative normalized data are
collected in Appendix~\ref{app:data}.

\subsection*{Feed residue-completeness for \texorpdfstring{$10\le N\le 13$}{10<=N<=13}
(Fact~\ref{fact:olson-feed})}
Olson's hypothesis holds for all $N\ge 14$, so only the four cases
$N=10,11,12,13$ --- where the proof needs residue-completeness but Olson does not
apply --- require computation.  For each, the subset sums $L^1(N)$ are reduced mod
$q=p_{N+1}\in\{31,37,41,43\}$ (a $\Z_q$ subset-sum reachability, or an explicit
residue-by-residue certificate) to confirm all $q$ residues occur; the largest is
$N=13$, $q=43$, a $13\times 43$ residue-DP over $\Z_{43}$.

\subsection*{The rational sliver (Theorem~\ref{thm:above}, Step~4)}
The same packed bitset of $L^2(10)$ is gap-free not only on its central block but
downward to $L:=740{,}082{,}854=0.11439\ldots\cdot P_{10}$: every integer of
$[L,\,\sigma^2(10)/6]$ is a subset sum of $D_2(10)$.  Hence every $a/b$ with
$a/b\in[0.1144,\,B_{10}/6)$ has $aP_{10}/b\in L^2(10)$ (4a).  The residual region
$a/b\in[B_{N_b}/6,0.1144)$ forces $N_b\le 6$, i.e.\ $b\mid 2\cdot3\cdot5\cdot7\cdot
11\cdot13=30{,}030$ and $a<0.1144\,b\le 3434$; an exhaustive search yields exactly
$47$ coprime pairs $(a,b)$, each with $aP_N/b\in L^2(N)$ for some $N\in[N_b,10]$
(4b).  The $47$-pair list is reproducible by the same reachability routine.

\subsection*{The uniform base \texorpdfstring{$\gamma_{10}$}{gamma10}
(Theorem~\ref{thm:uniform})}
The uniform threshold is seeded at the gap-free floor $\gamma_{10}=0.11439\ldots$ of
$L^2(10)$ --- the \emph{same} $N_0=10$ packed bitset as the integer base case
(Proposition~\ref{prop:base}), reread for its largest hole below the midpoint; no
separate large computation is needed.  The recurrence's middle sum
$\sum_{11}^{300}w_k/p_k=0.0475$ is evaluated exactly from $w_k=Y_0(k-1)/P_{k-1}$
(the same $\Z_q$ data as Appendix~\ref{app:data}; the products $w_kp_k$ fall from
$9.14$ at $k=13$), and everything past $k=300$ is bounded analytically by
$\sum_{k>300}4/p_k^{3/2}=0.0191$ in the proof of Theorem~\ref{thm:uniform} --- so no
$w_k$ beyond $k=300$ is computed.

Each of these computations is elementary --- subset-sum reachability by a packed
bitset for the base case and the sliver, a polynomial $\Z_q$ minimum-weight
knapsack for $Y_0$ and the $w_k$, and direct residue enumeration for the small-$N$
feed --- and was independently re-verified.  The residue-side computations are
polynomial ($O(p_k k)$, not exponential); the only heavy step is the single
value-side bitset at $N=10$ ($\approx771$\,MB), which serves both the integer base
case and the uniform threshold.  We record representative numerical data in
Appendix~\ref{app:data} and Table~\ref{tab:omega3}; the computations are reproduced
by the standard-library scripts of Appendix~\ref{app:scripts}, included as ancillary
files.  Beyond reproducing these finite computations, the entire logical development of
the paper is machine-checked in Lean~4 / Mathlib, reducing to two cited classical axioms
--- the check trusting, in addition to the Lean kernel, the compiled evaluator that
re-runs the finite computations (\texttt{native\_decide}); see Appendix~\ref{app:lean}.

\section{The rational case above the threshold}\label{sec:rational}\label{sec:A}

Theorem~\ref{thm:main} is the integer ($b=1$) case.  The very same engine --- the
reduction $(\star)$, the central block (Lemma~\ref{lem:block}), and the overlap
(Proposition~\ref{prop:overlap}) --- already settles \emph{every} rational $a/b$
with squarefree $b$ that lies above an explicit threshold.  This upgrades the
reach from ``integers'' to a clean dichotomy (\S\ref{sec:open}).

Throughout this section $b$ is squarefree, $N_b$ denotes the index of the largest
prime factor of $b$ (so $p_{N_b}\mid b$ and $p_{N_b+1}\nmid b$), and
$M_b:=\max(10,N_b)$.  ``$\omega=2$ representable'' means a finite sum of distinct
unit fractions with semiprime denominators.

\begin{theorem}[above the threshold]\label{thm:above}
For every squarefree $b$ and every rational $a/b\ge B_{N_b}/6$, the number $a/b$
is $\omega=2$ representable.  In particular every $a/2$ is representable.
\end{theorem}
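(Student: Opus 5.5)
The plan is to run the same covering argument as in the proof of Theorem~\ref{thm:main}. The only new ingredient is a divisibility remark that keeps the reduction $(\star)$ inside the integers. Since $b$ is squarefree and every prime factor of $b$ is at most $p_{N_b}$, we have $b\mid P_N$ for every $N\ge N_b$. Hence $P_N\,a/b$ is an integer for all such $N$. By Proposition~\ref{prop:star}, it therefore suffices to find one $N\ge N_b$ with $P_N a/b\in L^2(N)$.

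\emph{Case $N_b\ge 10$.} By Lemma~\ref{lem:block}, for each $N\ge N_b$ the integer $P_Na/b$ lies in $L^2(N)$ as soon as
\[
a/b\in I_N=\bigl[\tfrac16B_N,\tfrac56B_N\bigr].
\]
Proposition~\ref{prop:overlap} gives $B_{N+1}\le 5B_N$, so consecutive intervals overlap. Since $B_N$ is increasing and unbounded, $\bigcup_{N\ge N_b}I_N=[B_{N_b}/6,\infty)$. So every $a/b\ge B_{N_b}/6$ lies in some $I_N$ with $N\ge N_b$, and this case is finished.

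\emph{Case $N_b<10$.} Here the block lemma is only available from $N=10$ on. Since $N\ge 10\ge N_b$ still gives $b\mid P_N$, the same overlap argument covers every $a/b\ge B_{10}/6\approx 0.1589$. What remains is the window $B_{N_b}/6\le a/b<B_{10}/6$, which I would close with the finite data already recorded in Section~\ref{sec:comp}, in two steps.
\begin{enumerate}
\item[(4a)] The $N=10$ bitset is gap-free down to $0.11439\,P_{10}$. So whenever $a/b\in[0.1144,B_{10}/6)$, the integer $aP_{10}/b$ is a subset sum of $D_2(10)$.
\item[(4b)] The residual region $B_{N_b}/6\le a/b<0.1144$ is nonempty only when $B_{N_b}/6<0.1144$. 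This forces $N_b\le 6$, so $b\mid 30030$ and $a<3434$. It is therefore a finite list of $47$ coprime pairs, each certified by exhibiting some $N\in[N_b,10]$ with $aP_N/b\in L^2(N)$.
\end{enumerate}

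The ``in particular'' follows at once. For $b=2$ we have $N_b=1$ and $B_1=0$, so the threshold is vacuous. Moreover every $a/2\ge\frac12$ already exceeds $B_{10}/6$, so the overlap argument alone suffices and no computation is needed.

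The main obstacle is the low window in the case $N_b<10$. There the analytic block lemma gives nothing, and correctness rests on two computational facts: the gap-free floor of $L^2(10)$ extending below $\sigma^2(10)/6$, and the exhaustive $47$-pair check. The step needing care is the reduction of the residual region to a finite, explicitly enumerable set. This requires computing $B_1,\dots,B_{9}$ exactly, to determine which $N_b$ have $B_{N_b}/6<0.1144$, and bounding $a$ by $0.1144\,b$.
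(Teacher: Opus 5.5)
Your proposal is correct and follows the paper's proof essentially step for step. You use integrality from $b\mid P_N$ for $N\ge N_b$, the overlapping blocks $[B_N/6,5B_N/6]$ (Proposition~\ref{prop:overlap}) to cover everything above $B_{\max(10,N_b)}/6$, and, for $N_b<10$, the gap-free extension of $L^2(10)$ down to $0.11439\,P_{10}$ plus the finite $47$-pair check when $N_b\le 6$; your separate observation that every $a/2\ge\tfrac12>B_{10}/6$ needs no computation is also correct.
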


\begin{proof}
Set $M_b=\max(10,N_b)$.  We prove representability first for $a/b\ge B_{M_b}/6$,
then discharge the sliver $[B_{N_b}/6,B_{M_b}/6)$.

\emph{Step 1 (the blocks cover $[B_{M_b}/6,\infty)$).}  By
Proposition~\ref{prop:overlap}, $B_{N+1}\le 5B_N$ for all $N\ge 10$, so the closed
intervals $J_N:=[B_N/6,5B_N/6]$ satisfy $B_{N+1}/6\le 5B_N/6$, i.e.\ $J_N$ and
$J_{N+1}$ overlap.  Since $B_N\uparrow\infty$,
$\bigcup_{N\ge M_b}J_N=[B_{M_b}/6,\infty)$.  We also record the sharper ratio used
in Theorem~\ref{thm:uniform}: from $B_{N+1}-B_N=A_N/p_{N+1}$ and
$A_N/B_N=2A_N/(A_N^2-S_N)$ with $S_N=\sum_{i\le N}p_i^{-2}$ --- decreasing in $N$,
since $x\mapsto 2x/(x^2-S_N)$ has negative derivative --- one gets
$A_N/B_N\le A_{10}/B_{10}<1.61$, whence
\begin{equation}\label{eq:ratio}
  \frac{B_{N+1}}{B_N}=1+\frac{A_N/B_N}{p_{N+1}}\le 1+\frac{1.61}{31}<1.06
  \qquad(N\ge 10).
\end{equation}

\emph{Step 2 (integrality; needs squarefree $b$).}  As $b$ is squarefree it is a
product of distinct primes, all $\le p_{N_b}$, so $b\mid P_{N_b}\mid P_N$ for every
$N\ge N_b$; hence $aP_N/b=a(P_N/b)\in\Z$.  (Squarefreeness is necessary: a square
factor $p^2\mid b$ gives $b\nmid P_N$.)

\emph{Step 3 (membership above threshold).}  Let $a/b\ge B_{M_b}/6$.  By Step~1
choose $N\ge M_b\ (\ge 10)$ with $a/b\in J_N$.  Multiplying by $P_N$ and using
$\sigma^2(N)=B_NP_N$, the integer $aP_N/b$ (Step~2) lies in
$[\sigma^2(N)/6,5\sigma^2(N)/6]$, hence in
$[\lceil\sigma^2(N)/6\rceil,\lfloor5\sigma^2(N)/6\rfloor]$, so $aP_N/b\in L^2(N)$
by Lemma~\ref{lem:block}.  By $(\star)$ (Proposition~\ref{prop:star}), $a/b$ is
$\omega=2$ representable.

\emph{Step 4 (the sliver, only when $N_b<10$).}  The interval
$[B_{N_b}/6,B_{M_b}/6)$ is nonempty only for $N_b<10$ (else $M_b=N_b$ and Step~3
suffices).  It is discharged by two finite computations
(Section~\ref{sec:comp}): \textup{(4a)} the packed bitset $L^2(10)$ is gap-free
not merely on its central block but downward to $0.11439\,P_{10}$, which covers the
\emph{entire} sliver of every $b$ with $N_b\ge 7$ (since $B_7/6>0.1144$); and
\textup{(4b)} the residual region ($b\mid 30030$, $N_b\le 6$) is a finite set of
exactly $47$ coprime pairs $(a,b)$, each verified representable.  Together (4a) and
(4b) cover $[B_{N_b}/6,B_{M_b}/6)$.
\end{proof}

\begin{remark}
For $N_b\ge 10$ one has $M_b=N_b$, so there is no sliver and the threshold is
exactly $B_{N_b}/6$, the left endpoint of the lowest block $J_{N_b}$ entering the
covering; below it the central-block method provides no guarantee
(\S\ref{sec:open}).  Table~\ref{tab:BN} lists $B_N$ and the resulting threshold
$B_N/6$.  For $b=2$ one has $N_2=1$, $M_2=10$, threshold
$B_{10}/6=0.1589$, and concretely
$\tfrac12=\tfrac16+\tfrac1{10}+\tfrac1{15}+\tfrac1{21}+\tfrac1{26}+\tfrac1{35}
+\tfrac1{39}+\tfrac1{65}+\tfrac1{91}$.
\end{remark}

\begin{table}[t]
\centering
\begin{tabular}{rrrr@{\qquad}rrrr}
\toprule
$N$ & $p_N$ & $B_N$ & $B_N/6$ & $N$ & $p_N$ & $B_N$ & $B_N/6$\\
\midrule
 1 &  2 & $0.0000$ & $0.0000$ & 10 & 29 & $0.9532$ & $0.1589$\\
 2 &  3 & $0.1667$ & $0.0278$ & 11 & 31 & $1.0027$ & $0.1671$\\
 3 &  5 & $0.3333$ & $0.0556$ & 12 & 37 & $1.0450$ & $0.1742$\\
 4 &  7 & $0.4810$ & $0.0802$ & 13 & 41 & $1.0838$ & $0.1806$\\
 5 & 11 & $0.5879$ & $0.0980$ & 14 & 43 & $1.1215$ & $0.1869$\\
 6 & 13 & $0.6854$ & $0.1142$ & 15 & 47 & $1.1564$ & $0.1927$\\
 7 & 17 & $0.7644$ & $0.1274$ & 16 & 53 & $1.1877$ & $0.1980$\\
 8 & 19 & $0.8382$ & $0.1397$ & 17 & 59 & $1.2162$ & $0.2027$\\
 9 & 23 & $0.9015$ & $0.1503$ & 18 & 61 & $1.2440$ & $0.2073$\\
\bottomrule
\end{tabular}
\caption{The normalized block floor $B_N=\sigma^2(N)/P_N$ and the threshold
$B_N/6$ of Theorem~\ref{thm:above}: a squarefree denominator $b$ whose largest
prime factor is $p_N$ (so $N_b=N$) is representable for every $a/b\ge B_N/6$.
Values are exact rationals shown to four places.  From $N=17$ onward
$B_N/6>\tfrac15$, so the $b$-uniform bound of Theorem~\ref{thm:uniform} is the
smaller threshold.}
\label{tab:BN}
\end{table}

\subsection{A \texorpdfstring{$b$}{b}-uniform threshold}
The threshold $B_{N_b}/6$ grows with the denominator,
\[
  B_{N_b}\sim\tfrac12(\log\log p_{N_b})^2 .
\]
Reusing the onset radius $Y_0$ of
Section~\ref{sec:analytic} we obtain an \emph{unconditional, $b$-uniform}
threshold.  The splitting recursion \textup{(S)}, reindexed $N\to N-1$, reads
$L^2(N)=p_N\,L^2(N-1)+L^1(N-1)$.  Let $\gamma_N$ be the \emph{gap-free floor} of
$L^2(N)$ --- the least $\gamma$ for which every integer from $\gamma P_N$ up to
$\sigma^2(N)-\gamma P_N$ lies in $L^2(N)$, i.e.\ how far below the central block
$L^2(N)$ stays gap-free --- and put
\begin{equation}\label{eq:wmin}
  w_N:=\max_{r\in\Z_{p_N}}\min\Bigl\{\sum_{i\in T}\tfrac1{p_i}:
  T\subseteq\{1,\dots,N-1\},\ \sum_{i\in T}p_i^{-1}\equiv r\!\!\pmod{p_N}\Bigr\}.
\end{equation}
By Definition~\ref{def:Y0}, $w_N$ is \emph{exactly} the normalized onset radius of
the feed one level down: $w_N=Y_0(N-1)/P_{N-1}$.

\begin{lemma}[floor recurrence]\label{lem:floorrec}
If the subset sums of $\{p_i^{-1}\bmod p_N:1\le i<N\}$ cover $\Z_{p_N}$ (so
$w_N<\infty$; Fact~\ref{fact:olson-feed}) and $\gamma_{N-1}\le 0.45\,B_{N-1}$, then
$\gamma_N\le\gamma_{N-1}+w_N/p_N$.
\end{lemma}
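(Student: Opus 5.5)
We run the bottom-strip half of the induction step of \S\ref{sec:step}, with the central block replaced by the longer gap-free block of $L^2(N-1)$. Write $q=p_N$ and $M=N-1$. By definition of $\gamma_M$, every integer in $[\gamma_M P_M,\ \sigma^2(M)-\gamma_M P_M]$ lies in $L^2(M)$. Hence $q\,L^2(M)$ contains every multiple of $q$ in $[r_{lo},r_{hi}]$, where
\[
r_{lo}=q\lceil\gamma_M P_M\rceil,\qquad r_{hi}=q\lfloor\sigma^2(M)-\gamma_M P_M\rfloor .
\]
By (S) in the form $L^2(N)=q\,L^2(M)+L^1(M)$, an integer $x$ lies in $L^2(N)$ as soon as the window $W(x)=[x-r_{hi},x-r_{lo}]\cap[0,\sigma^1(M)]$ contains a feed sum $u\equiv x\pmod q$. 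The aim is to show this for every integer $x\in[\gamma' P_N,\ \sigma^2(N)-\gamma' P_N]$ with $\gamma':=\gamma_M+w_N/q$. By the symmetry of $L^2(N)$ about $\sigma^2(N)/2$ (complementation), it is enough to treat the lower half, $x\le\sigma^2(N)/2$.

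\textbf{Bottom strip.} Normalize: $\gamma' P_N=q\gamma_M P_M+w_N P_M=q\gamma_MP_M+Y_0(M)$. For $x\ge\gamma'P_N$ we get $x-r_{lo}\ge Y_0(M)-q$, up to ceiling rounding. The rounding must be absorbed: either work with $\lceil\cdot\rceil$ throughout, or note that $\gamma_N$ is defined as an infimum over reals, so an $O(q)$ integer slack can be charged as in Lemma~\ref{lem:Bbound}. I expect this to be fiddly but harmless, possibly needing the floor defined with integer rounding. Whenever $x-r_{lo}\le\sigma^1(M)$ and $x\ge r_{hi}$ is false, the window is pinned at $0$ and contains $[0,Y_0(M)]$. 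By the definition of $Y_0$, which is finite by the residue-completeness hypothesis, the window then contains a feed sum in every residue class mod $q$, so $x\in L^2(N)$. Once $x-r_{lo}\ge\sigma^1(M)$ the window is the whole feed range, and residue-completeness alone suffices.

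\textbf{Where the hypothesis enters.} We must check that for $x$ up to $\sigma^2(N)/2$ the window is never pinned at the top only, i.e.\ that $x\le r_{hi}$ or that the window covers enough. It suffices to have $\sigma^2(N)/2\le r_{hi}$. In normalized form this reads
\[
\tfrac12(qB_M+A_M)\le q(B_M-\gamma_M),\quad\text{i.e.}\quad \gamma_M\le \tfrac12B_M-\tfrac{A_M}{2q}.
\]
With $\gamma_M\le0.45B_M$ this requires $A_M/q\le0.1\,B_M$. The bound $A_M/B_M<1.63$ from Lemma~\ref{lem:seam}, together with $q\ge31$, gives $A_M/q<0.053\,B_M$. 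For small $N$ this has to be rechecked, or else the lemma is restricted to the range where it is applied, $N\ge11$. Between the two cases the lower half is fully covered.

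\textbf{Main obstacle.} The delicate point is the bookkeeping at the seam between the bottom strip and the full-feed region, together with the integer rounding. I would first try to show that the window length $r_{hi}-r_{lo}$ exceeds $\sigma^1(M)$, as in Lemma~\ref{lem:seam}; the $0.45$ hypothesis again supplies ample margin there. The remaining integer-rounding gap of size $O(q)$ I would try to absorb into the definition of the floor.
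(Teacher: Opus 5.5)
Your skeleton matches the paper's. Complement symmetry reduces to $x\le\sigma^2(N)/2$. You write $x=p_Nk+u$ with $u$ a feed sum in the residue class of $x$. The upper constraint $k\le\sigma^2(N-1)-\gamma_{N-1}P_{N-1}$ is your condition $\gamma_{N-1}\le\tfrac12B_{N-1}-A_{N-1}/(2p_N)$, i.e.\ $\tfrac12B_N\le B_{N-1}-\gamma_{N-1}$. The paper also gets this from the $0.45$ hypothesis and the ratio bound \eqref{eq:ratio}, implicitly for $N\ge11$ as you note.

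\textbf{The gap is at the bottom end,} which you defer as ``fiddly but harmless.'' As written you only obtain $W(x)\supseteq[0,\,Y_0(N-1)-q]$, and that window need not meet every residue class. Your two suggested remedies do not prove the lemma. Redefining the floor with integer rounding changes the statement. Charging an $O(q)$ slack as in Lemma~\ref{lem:Bbound} yields only $\gamma_N\le\gamma_{N-1}+w_N/p_N+1/P_{N-1}$. On this side there is no margin to absorb such a loss, unlike the upper side, where your rounding is covered by the gap between $0.47$ and $0.45$.

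\textbf{The fix is to use the congruence before rounding.} The loss comes from bounding $r_{lo}=q\lceil\gamma_{N-1}P_{N-1}\rceil$ by $q\gamma_{N-1}P_{N-1}+q$ first. Only feed sums $u\equiv x\pmod q$ matter, and for them $x-u=qk$ with $k\in\Z$. So the real inequality $k\ge\gamma_{N-1}P_{N-1}$ already forces $k\ge\lceil\gamma_{N-1}P_{N-1}\rceil$. Take $u$ to be the cheapest feed sum in the class of $x$, so $u\le Y_0(N-1)=w_NP_{N-1}$. Then
\[
k\ \ge\ \frac{x-w_NP_{N-1}}{p_N}\ \ge\ \gamma_{N-1}P_{N-1},
\]
with no rounding loss. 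Since the gap-free range consists of the integers in a real interval, $k\in L^2(N-1)$. This is exactly the paper's proof.

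This argument also makes two parts of your plan unnecessary. The cheapest representative satisfies $u\le Y_0(N-1)\le\sigma^1(N-1)$, so the bottom-strip versus full-feed case split is not needed. For the same reason you do not need the window-length check modelled on Lemma~\ref{lem:seam}.
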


\begin{proof}
By the complement symmetry $m\in L^2(N)\Leftrightarrow\sigma^2(N)-m\in L^2(N)$ it
suffices to place every integer
$m\in[(\gamma_{N-1}+w_N/p_N)P_N,\,\sigma^2(N)/2]$ in $L^2(N)$.  Choose $T$ with
$\sum_{i\in T}p_i^{-1}\equiv m\,P_{N-1}^{-1}\pmod{p_N}$ and
$\sum_{i\in T}1/p_i\le w_N$; then $y:=\sum_{i\in T}P_{N-1}/p_i\le w_NP_{N-1}$
satisfies $y\equiv m\pmod{p_N}$, so $x:=(m-y)/p_N\in\Z$.  Now
$x\ge(m-w_NP_{N-1})/p_N\ge\gamma_{N-1}P_{N-1}$, and
$x\le m/p_N\le\sigma^2(N)/(2p_N)=\tfrac12 B_NP_{N-1}\le(B_{N-1}-\gamma_{N-1})P_{N-1}
=\sigma^2(N-1)-\gamma_{N-1}P_{N-1}$, the last step because $B_{N-1}-\tfrac12 B_N\ge
B_{N-1}(1-\tfrac{1.06}2)=0.47\,B_{N-1}\ge\gamma_{N-1}$ by \eqref{eq:ratio}
($B_N\le1.06\,B_{N-1}$, valid for $N\ge10$) and the hypothesis
$\gamma_{N-1}\le0.45\,B_{N-1}$.  Thus $x$ lies in the
gap-free range of $L^2(N-1)$, so $x\in L^2(N-1)$, and $m=p_Nx+y\in L^2(N)$ by
\textup{(S)}.
\end{proof}

\begin{theorem}[uniform threshold]\label{thm:uniform}\label{thm:Aprime}
There is an explicit absolute constant $\gamma_\infty\le \tfrac15$ such that every
$a/b$ with $b$ squarefree and $a/b\ge\gamma_\infty$ is $\omega=2$ representable.
Combined with Theorem~\ref{thm:above}, $a/b$ is representable whenever
$a/b\ge\min\{B_{N_b}/6,\,\tfrac15\}$, uniformly in $b$; this improves the
$b$-dependent threshold exactly when $b$ has a prime factor $\ge p_{17}=59$.
\end{theorem}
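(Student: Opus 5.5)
The plan is to iterate the floor recurrence (Lemma~\ref{lem:floorrec}) upward from the computed seed $\gamma_{10}=0.11439\ldots$ (Section~\ref{sec:comp}), which gives, for every $N\ge 10$,
\[
\gamma_N\ \le\ \gamma_{10}+\sum_{k=11}^{N}\frac{w_k}{p_k}\ \le\ \gamma_{10}+\sum_{k=11}^{300}\frac{w_k}{p_k}+\sum_{k>300}\frac{w_k}{p_k}=:\gamma_\infty .
\]
At each step $k\ge 11$ the hypotheses of Lemma~\ref{lem:floorrec} must be verified. Residue-completeness is Fact~\ref{fact:olson-feed} applied at level $k-1\ge 10$. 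The side condition $\gamma_{k-1}\le 0.45\,B_{k-1}$ follows inductively: the running bound never exceeds $\gamma_\infty\le\tfrac15$, while $0.45\,B_{k-1}\ge 0.45\,B_{10}>0.42$. Hence the induction closes as soon as $\gamma_\infty\le\tfrac15$ is established.

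Next comes the numerical bound on $\gamma_\infty$. The middle sum $\sum_{11}^{300}w_k/p_k=0.0475$ is evaluated exactly from the same $\Z_q$ Dijkstra data used for Theorem~\ref{thm:Y0}, since $w_k=Y_0(k-1)/P_{k-1}$. For the tail $k>300$ I would reuse the argument of Lemma~\ref{lem:tail-elem}. By Olson, the $c_0\approx 2\sqrt{p_k}+1$ smallest feed atoms already realize every residue, so
\[
w_k\le \frac{c_0}{p_{k-c_0}}.
\]
Chebyshev (Ch) then gives $p_{k-c_0}\gtrsim p_k$ up to a constant, which yields $w_k\le 4/\sqrt{p_k}$ for $k>300$. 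Consequently $\sum_{k>300}w_k/p_k\le\sum_{k>300}4p_k^{-3/2}$. Bounding this by an integral with $p_k>\tfrac12 k\log k$, or by a finite sum plus an explicit tail, gives about $0.0191$. The total is then
\[
\gamma_\infty\le 0.1144+0.0475+0.0191<0.181<\tfrac15 .
\]

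With the floor bound in hand, I would deduce representability. Given $a/b\ge\gamma_\infty$ with $b$ squarefree, choose $N\ge M_b=\max(10,N_b)$ so that $aP_N/b$ is an integer (Step~2 of Theorem~\ref{thm:above}) and $a/b\le B_N-\gamma_N$; this is possible because $B_N\to\infty$ while $\gamma_N\le\gamma_\infty$. Since $\gamma_N\le\gamma_\infty\le a/b$, the integer $aP_N/b$ lies in $[\gamma_NP_N,\sigma^2(N)-\gamma_NP_N]$, which is gap-free by the definition of $\gamma_N$. Proposition~\ref{prop:star} then finishes. Combining with Theorem~\ref{thm:above} gives the threshold $\min\{B_{N_b}/6,\tfrac15\}$, and Table~\ref{tab:BN} shows that $B_{N_b}/6>\tfrac15$ exactly when $N_b\ge 17$.

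The main obstacle is the analytic tail bound $w_k\le 4/\sqrt{p_k}$ uniformly for $k>300$. It requires tracking the constants in $c_0/p_{k-c_0}$ carefully: one must show $p_{k-c_0}\ge c_0\sqrt{p_k}/4$, i.e.\ that losing $c_0\approx 2\sqrt{p_k}$ indices shrinks the prime only by a factor near $1$. I would prove this via (Ch) together with a monotone-envelope argument, mirroring the $\tau(N)$ monotonicity in Lemma~\ref{lem:tail-elem}. If the constant $4$ proves too tight, I would instead extend the exact range beyond $300$; there is slack of about $0.02$ to absorb a worse constant.
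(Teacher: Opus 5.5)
Your outline matches the paper's proof step for step. You seed at $\gamma_{10}=0.11439\ldots$ and iterate Lemma~\ref{lem:floorrec}, securing the side condition by $0.45\,B_{k-1}\ge0.45\,B_{10}>0.42$. You then add the exact middle sum $0.0475$ and the tail $\sum_{k>300}4p_k^{-3/2}$, and pick $N$ with $B_N\ge a/b+\gamma_\infty$. Your insistence on $N\ge\max(10,N_b)$, so that the recurrence bound on $\gamma_N$ actually applies, is if anything more careful than the paper's $N\ge N_b$. The one soft spot is the tail step you flagged, where the tools you name do not deliver the numbers you quote.

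First, the bounds (Ch) pin down $p_k$ only to within the factor $U(k)/L(k)=4$. So no monotone-envelope argument built on them can certify $p_{k-c_0}\ge c_0\sqrt{p_k}/4\approx p_k/2$; already $L(k-c_0)<L(k)<p_k/2$. What is needed is an honest count of primes in $(p_k/2,p_k)$. The paper's input is that the $c_0$ largest primes below $p_k$ all exceed $p_k/2$ (crossover $k_0=120$), which gives each atom weight $<2/p_k$. Given that fact, your per-atom bound $1/p_{k-c_0}$ works equally well. It is in fact the sharper of the two: with $c_0=\lfloor\sqrt{4p_k-3}\rfloor+1$, the paper's $2c_0/p_k$ can sit marginally above $4/\sqrt{p_k}$.

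Second, summing $4p_k^{-3/2}$ with $p_k>\tfrac12k\log k$ gives roughly $0.07$, not $0.0191$, and would push the total past $\tfrac15$. The value $0.0191$ comes from the actual primes (a finite sum plus an explicit tail). Rosser's $p_k>k\log k$ already gives about $0.024$, which still fits under $\tfrac15$. If you fall back on crude (Ch) throughout, each tail term is inflated by a factor of several. That is far more than the $0.02$ of slack can absorb starting at $k=300$, so the exact range would have to be pushed into the low thousands.
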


\begin{proof}
For each $N$ the subset sums of $\{p_i^{-1}\bmod p_N:i<N\}$ cover $\Z_{p_N}$
(Fact~\ref{fact:olson-feed}).  By the tail estimate of Section~\ref{sec:analytic},
the $c_0\approx2\sqrt{p_N}$ smallest feed atoms already exhaust the residues and
each exceeds $p_N/2$, so
$w_N=Y_0(N-1)/P_{N-1}\le 2c_0/p_N\le 4/\sqrt{p_N}$; hence $\sum_N w_N/p_N$
converges.  Iterating Lemma~\ref{lem:floorrec} from the base
$\gamma_{10}=0.11439\ldots$ --- the gap-free floor of $L^2(10)$, which is the
$N_0=10$ base case of Theorem~\ref{thm:main} already in hand (no separate
computation; Section~\ref{sec:comp}) ---
\[
  \gamma_N\le\gamma_{10}+\sum_{k=11}^{N}\frac{w_k}{p_k}
  \le\underbrace{0.1144}_{\gamma_{10}}
  +\underbrace{0.0475}_{\sum_{11}^{300}w_k/p_k}
  +\underbrace{0.0191}_{\sum_{k>300}4p_k^{-3/2}}\ =\ 0.1810\ <\ \tfrac15 ,
\]
the middle sum $\sum_{11}^{300}w_k/p_k=0.0475$ evaluated exactly from
$w_k=Y_0(k-1)/P_{k-1}$ (an $O(p_k k)$ minimum-weight subset computation over
$\Z_{p_k}$; the products $w_kp_k$ decrease from $9.1$ at $k=13$), and the tail
$\sum_{k>300}4/p_k^{3/2}=0.0191$ a convergent prime sum bounded by the covering
estimate $w_k\le4/\sqrt{p_k}$ (Theorem~\ref{thm:olson}, valid for $k>k_0=120$).
Thus $\gamma_N\le0.181<\tfrac15$ for all $N\ge 10$; we keep $\tfrac15$ as the
(rounder) stated threshold.  The
iteration runs over $k\ge11$, where the side condition of
Lemma~\ref{lem:floorrec} is met at every step: $\gamma_{k-1}\le0.181\le
0.45\,B_{k-1}$, since $B_{k-1}\ge B_{10}=0.9532$ gives $0.45\,B_{k-1}\ge0.429$.
Given $a/b\ge\gamma_\infty$ choose $N\ge N_b$ with $B_N\ge a/b+\gamma_\infty$
(possible as $B_N\to\infty$); then $aP_N/b\in\Z$ (Theorem~\ref{thm:above}, Step~2)
lies in $[\gamma_NP_N,\sigma^2(N)-\gamma_NP_N]\subseteq L^2(N)$, so $a/b$ is
representable by $(\star)$.  Finally $B_{N_b}/6>\tfrac15$ exactly when $N_b\ge 17$
(as $B_{16}=1.188<1.2<1.216=B_{17}$), i.e.\ when $b$ has a prime factor
$\ge p_{17}=59$.
\end{proof}

\section{The rational $\omega=3$ theorem}\label{sec:omega3}

The threshold theorem of \S\ref{sec:rational} settles only part of the $\omega=2$
problem.  It nonetheless settles the rational $\omega=3$ problem of Erd\H{o}s and Graham in
full: every positive rational with squarefree denominator is a finite sum of
distinct unit fractions with \emph{sphenic} denominators (three distinct prime
factors).  The
mechanism is a lift that sends a target to a slightly larger one, represents it in
$\omega=2$, and divides by one extra prime; because the lift can reach arbitrarily
small targets, it never enters the open core $(\mathrm{GFF})$.  It rests on the
floor-recurrence machinery of \S\ref{sec:rational} --- unconditional, via the
robustness Lemma~\ref{lem:Aunif} below --- and \emph{not} on the value of the
$\omega=2$ uniform constant, so it is unaffected by the choice of that constant's
base.  Of its truth the three authors of BEG15
record, in their closing footnote, that one believed it, another doubted it, and
the third --- ``having looked in The BOOK at the answer'' --- stayed silent
(\cite{BEG2015}); to our knowledge no proof has appeared.

\begin{theorem}[rational $\omega=3$]\label{thm:omega3}
Every positive rational $a/b$ with $b$ squarefree is a finite sum of distinct unit
fractions, each denominator a product of three distinct primes.
\end{theorem}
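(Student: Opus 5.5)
The plan is to lift the target $a/b$ by one extra prime $r$ not dividing $b$, represent $ra/b$ as a sum of distinct semiprime unit fractions $1/(p_ip_j)$ whose primes avoid $r$, and divide by $r$. Each term becomes $1/(rp_ip_j)$, a sphenic reciprocal, and distinct pairs $\{i,j\}$ give distinct denominators. The target $ra/b$ has squarefree denominator $b$, and since $r\nmid b$ the integrality argument of Theorem~\ref{thm:above}, Step~2, still applies with $P_N/r$ in place of $P_N$.

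First I would choose $r$ large. With $r$ prime and $r\nmid b$, we get $ra/b\ge r/b$, which exceeds any fixed constant once $r>b$. So the target lands far above the $\omega=2$ threshold and never touches the open core $(\mathrm{GFF})$. The catch is that the representation must avoid the prime $r$ itself. So I need the $\omega=2$ machinery over the prime set $\Pi=\{p_1,\dots,p_M\}\setminus\{r\}$, with primorial $P_M/r$ and normalized mass $B_\Pi(M,r)$.

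Second, I would set up this deleted-prime version, which I expect to be the robustness lemma (Lemma~\ref{lem:Aunif}). Define $\gamma_{\mathrm{ex}}(M,r)$ as the gap-free floor of the subset sums of $\{(P_M/r)/(p_ip_j): i<j,\ p_i,p_j\ne r\}$. The splitting recursion (S) and the floor recurrence Lemma~\ref{lem:floorrec} go through verbatim whenever the new prime $p_M\ne r$. The feed is then the single-prime atoms over $\Pi\cap\{p_1,\dots,p_{M-1}\}$, which lose at most one element. The residue-completeness of Fact~\ref{fact:olson-feed} survives via Olson for large $M$, since losing one residue barely moves $c_0$. The cost $w_M$ changes only by a bounded factor. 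At the step $M$ with $p_M=r$ nothing is added, so the floor does not jump; it is simply carried over (and rescaled, since the primorial gains no factor $r$).

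I would seed at $M=10$ with $r>29$, so the base $\gamma_{10}=0.1144$ is unchanged, and choose $r>\max(29,b)$. The iteration then gives $\gamma_{\mathrm{ex}}(M,r)\le 0.1144+\sum_k w'_k/p_k<\tfrac12 B_\Pi(M,r)$ uniformly. Here $w'_k$ is the modified cost, bounded by $4/\sqrt{p_k}$ in the tail and by the computed values plus a small correction for $11\le k\le 300$.

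Third, I would conclude as in Theorem~\ref{thm:uniform}. Since $B_\Pi(M,r)\ge B_M-A_M/r\to\infty$, pick $M\ge\max(N_b,\pi(r))$ with $B_\Pi(M,r)\ge ra/b+\gamma_{\mathrm{ex}}$. The integer $ra(P_M/r)/b=aP_M/b$ then lies in the gap-free range, which gives an $\omega=2$ representation of $ra/b$ over $\Pi$. Dividing by $r$ represents $a/b$ with sphenic denominators. The main obstacle is the second step: controlling $w'_k$ when $r$ is deleted, uniformly in $r$, on the finite range $11\le k\le 300$. There the paper's exact data $w_k$ no longer apply directly. For those $k$ I would first try a monotonicity bound: removing one atom can be compensated by Olson applied to the remaining $k-2$ residues once $k\ge 15$. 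For the very small $k$ I would avoid the issue entirely by insisting that $r>p_{300}$, so that deletion only affects the analytic tail, where the bound $w_k\le 2c_0/p_k$ with $c_0$ replaced by $c_0+1$ leaves ample margin.
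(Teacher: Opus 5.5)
Your argument is correct, but it reaches the theorem by a different route from the paper's proof of it. The paper keeps the lift prime small: any prime $r\nmid b$ with $r\ge11$ and $r>0.20/t$. It therefore needs Lemma~\ref{lem:Aunif}, a floor bound $\gamma_{\mathrm{ex}}(M,r)\le0.20$ that holds uniformly over \emph{every} prime $r$, including $r=2,3,5,7$. That lemma costs new exact computation: the deletion floors $\gamma_{\mathrm{ex}}(11,r)$ for all $r\le p_{11}$, and the worst-case deletion costs $w_k^{\max}$ for $12\le k\le220$.

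You instead take $r>\max(b,p_{300})$. Then:
\begin{enumerate}
\item $\Pi(M,r)$ coincides with the full ruler through level $\pi(r)-1$.
\item The floor at level $\pi(r)$ is just $\gamma_{\pi(r)-1}\le0.181$, by Theorem~\ref{thm:uniform}.
\item The deletion is felt only at levels $k>300$. There, Olson applied to the $c_0+1$ largest ruler primes (all $>p_k/2$ past the crossover) gives $w'_k\le2(c_0+1)/p_k$, so the tail contributes only about $0.02$.
\end{enumerate}
Since $v=ra/b>1$, there is ample room above the floor.

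This is precisely the mechanism of the paper's Lemmas~\ref{lem:finiteT} and~\ref{lem:H3}, specialised to $T=\varnothing$, $T'=\{r\}$; the paper reserves it for the $\omega\ge3$ descent. Your route shows the $\omega=3$ theorem needs no deleted-ruler computation at all. The only price is larger lift primes and levels, which is harmless for existence. Lemma~\ref{lem:Aunif}, in exchange for its extra computation, buys a stronger robustness statement: any prime $r\ge11$ with $r\nmid b$ and $rt>0.20$ can serve as the lift.

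Three minor corrections:
\begin{itemize}
\item At $p_M=r$ the normalized floor is unchanged rather than rescaled, since $\Pi(\pi(r),r)=\{p_1,\dots,p_{\pi(r)-1}\}$.
\item The side condition to verify is $\gamma_{\mathrm{ex}}(k-1,r)\le B_\Pi(k-1,r)-\tfrac12B_\Pi(k,r)$, not a comparison with $\tfrac12B_\Pi$. It is trivially met, since $B_\Pi(k-1,r)\ge B_{300}>2$ at the deleted levels.
\item Your fallback of Olson on $k-2$ residues is unnecessary, so drop it. As stated it is also off: it first applies at $k=18$, not $15$. It also yields only residue-completeness, not a usable cost bound.
\end{itemize}
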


\subsection{Complementation and the central block}
For $k\ge1$ write
\[
  B_k(N)=\sum_{i_1<\cdots<i_k\le N}1/(p_{i_1}\cdots p_{i_k}),
\]
so $B_2(N)=B_N$.  Clearing $1/(p_ap_bp_c)$ by $P_N$ turns
it into $P_N/(p_ap_bp_c)$, the product of the \emph{other} $N-3$ primes; hence
$a/b$ is $\omega=3$ representable using primes $\le p_N$ iff $(a/b)P_N$ is a subset
sum of the triple-prime atoms
\[
  \{\,P_N/(p_ip_jp_k):\ i<j<k\le N\,\},
\]
and BEG15's central-block lemma applies.

\begin{theorem}[BEG15, Lemma~1]\label{thm:beg3}
For $N\ge8$, every integer of $[\tfrac16\sigma^3(N),\tfrac56\sigma^3(N)]$ is such a
subset sum, where $\sigma^3(N)=B_3(N)P_N$.  Equivalently every $a/b$ with
$b\mid P_N$ and $a/b\in[B_3(N)/6,\,5B_3(N)/6]$ is $\omega=3$ representable.
\end{theorem}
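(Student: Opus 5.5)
The plan is to rerun, one level higher, the induction that proves Lemma~\ref{lem:block}, with $D_3(N)$ in place of $D_2(N)$ and the semiprime sums $L^2(N)$ as the feed.

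\textbf{The recursion.} Splitting $D_3(N+1)$ according to whether $p_{N+1}$ is among the three deleted primes gives
\[
D_3(N+1)=D_2(N)\sqcup p_{N+1}D_3(N),
\qquad\text{hence}\qquad
L^3(N+1)=L^2(N)+p_{N+1}L^3(N),
\]
and $\sigma^3(N+1)=\sigma^2(N)+q\,\sigma^3(N)$ with $q=p_{N+1}$.

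\textbf{The induction.} Put $a^{(3)}_N=\lceil\sigma^3(N)/6\rceil$ and $b^{(3)}_N=\lfloor5\sigma^3(N)/6\rfloor$, and assume $L^3(N)\supseteq[a^{(3)}_N,b^{(3)}_N]\cap\Z$. Exactly as in \S\ref{sec:step}, a target $x$ is covered if the window
\[
W(x)=[x-qb^{(3)}_N,\;x-qa^{(3)}_N]\cap[0,\sigma^2(N)]
\]
contains an element of $L^2(N)$ congruent to $x \bmod q$.

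I would first prove the analogue of the key seam, $q(b^{(3)}_N-a^{(3)}_N)\ge\sigma^2(N)$. After dividing by $P_N$ this reads $\tfrac23 qB_3(N)\gtrsim B_N$. For moderate $N$ this is a finite check. For large $N$ it follows from $B_3(N)\ge cA_N^3$ against $B_N\le A_N^2/2$, together with $q\ge p_9=23$.

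Given this seam, the window always contains an endpoint of the feed range. The proof then reduces, verbatim as in Proposition~\ref{prop:step-reduce}, to two conditions:
\begin{enumerate}[label=\textup{(\roman*)}]
\item $L^2(N)$ is residue-complete mod $q$;
\item its onset radius $Y_0^{(2)}(N)$ satisfies $Y_0^{(2)}(N)\le\tfrac16\sigma^2(N)-q$.
\end{enumerate}
The strip bounds of Lemmas~\ref{lem:Bbound}--\ref{lem:Bprime} carry over word for word with $\sigma^1$ replaced by $\sigma^2$. The remaining seam inequalities, such as $\sigma^2(N)>6q$ and $\sigma^2(N)\ge\tfrac65$, are trivial.

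\textbf{Why the feed is easier here.} At $\omega=3$ the feed is much richer than at $\omega=2$. For (i), the atoms $P_N/(p_ip_j)$ are nonzero mod $q$. I would pick a set of them with distinct residues, for instance all $P_N/(p_1p_j)$ for $2\le j\le N$, which gives $N-1$ distinct residues. Olson's theorem (Theorem~\ref{thm:olson}) then applies whenever $q<((N-1)^2+3)/4$.

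For (ii), I would take the $c_0\approx2\sqrt q$ \emph{smallest} atoms of this family, namely $P_N/(p_1p_j)$ with $j$ near $N$. Their total is about $(c_0/2)P_N/p_{N-c_0}$, which is far below $\tfrac16\sigma^2(N)=\tfrac16 B_NP_N$. This is the same Chebyshev tail estimate as Lemma~\ref{lem:tail-elem}, with an extra factor $\tfrac12$ in our favour. For the mid-range of $N$, I would add the alternative route through Lemma~\ref{lem:block}: for $N\ge10$ the feed $L^2(N)$ contains every integer of $[a_N,b_N]$, a run of length far exceeding $q$. Every residue is then hit inside that run, so (i) holds directly. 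The feed symmetry gives the same onset radius at the top strip, as in the $\omega=2$ argument.

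\textbf{Base and small cases (main obstacle).} The remaining work is the range $8\le N\lesssim 13$. There the base block for $L^3(8)$ must be checked by an exact subset-sum bitset over the $\binom83=56$ atoms. The steps $N=8,9$, where Lemma~\ref{lem:block} is not yet available for the feed, must be handled by computing $Y_0^{(2)}(N)$ exactly with the $\Z_q$ Dijkstra of \S\ref{sec:analytic} and comparing it with the exact strip lengths. I expect this to be the main obstacle: if the inequality fails at $N=8$, just as $Y_0(9)>\beta(9)$ did at $\omega=2$, I would reseed the base at $N=9$ or $10$ by direct computation and then recover $N=8$ itself by the bitset check.
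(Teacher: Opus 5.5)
\textbf{Gap: condition (ii) is unproved for the middle range of $N$.} Your skeleton is sound. The key seam $q(b^{(3)}_N-a^{(3)}_N)\ge\sigma^2(N)$ holds with a large margin, so the step reduces to (i) and (ii) for the feed $L^2(N)$. But nothing in the proposal establishes the onset bound (ii) for a whole block of levels, for two reasons.

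\emph{Lemma~\ref{lem:block} cannot supply (ii).} From the definitions, the $\omega=3$ strip length $\beta^{(3)}(N)=a^{(3)}_{N+1}-q\,a^{(3)}_N$ satisfies $\beta^{(3)}(N)\le a_N=\lceil\sigma^2(N)/6\rceil$, with equality only when $6\mid\sigma^3(N)$. So the central run of $L^2(N)$ starts exactly where the bottom strip ends and contributes at most one point to $[0,\beta^{(3)}(N)]$. It gives you (i) and nothing more.

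\emph{Your Olson estimate on $\{P_N/(2p_j)\}$ takes effect late, because it uses the largest atom of each row.}
\begin{itemize}
\item Olson applies to this family only when $q<((N-1)^2+3)/4$, which fails for every $N\le16$; for example, at $N=16$ one has $q=59$ against $57$.
\item Even afterwards, the normalized total $\tfrac12\sum_{N-c_0<j\le N}1/p_j$ exceeds $B_N/6$ until the high twenties. It is about $0.45$ against $B_{20}/6\approx0.22$ at $N=20$, and about $0.31$ against $B_{25}/6\approx0.23$ at $N=25$.
\end{itemize}
As written, the step $N\to N+1$ is therefore unjustified for roughly $10\le N\le 27$. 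The finite range you budget ($8\le N\lesssim13$, with actual computation only at $N=8,9$) is too small.

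\textbf{Repair.} Use the row of \emph{smallest} atoms, $\{P_N/(p_Np_j):j<N\}$. It also gives $N-1$ distinct nonzero residues mod $q$, but its entire normalized mass is $A_{N-1}/p_N$. This decreases in $N$ and is already about $0.028$ at $N=17$, against $B_{17}/6\approx0.20$. Since $q<((N-1)^2+3)/4$ for all $N\ge17$, Olson gives $Y_0^{(2)}(N)\le A_{N-1}P_N/p_N\le\tfrac16\sigma^2(N)-q$ for every $N\ge17$, with no Chebyshev tail needed.

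The levels $8\le N\le16$ can then be settled by the exact $\Z_q$ min-weight DP you already propose, now run over $\binom N2$ atoms. For $10\le N\le16$ there is an alternative: the gap-free floor $\gamma_N$ of \S\ref{sec:rational}, which the floor recurrence keeps below $B_N/6$ there, puts a run of $q$ consecutive feed sums below $\beta^{(3)}(N)$.

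\textbf{Comparison with the paper.} With this fix you have a genuinely different proof from the one the paper relies on. The paper does not reprove this statement; it cites BEG15. BEG15 build contiguous runs from shifted copies of an inner residue-complete set, joined by a max-gap estimate. You instead lift the paper's own $\omega=2$ endpoint-window argument one level, which needs only the onset radius of the feed and no internal gap structure.
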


This is BEG15's \emph{published} result (their covering uses Olson's theorem
\cite{Olson1968}); we cite it and do not reprove it.
A concrete instance, from BEG15: with $N=6$, $\tfrac17 P_6=4290\in L_6(3)$, giving
\[
  \tfrac17=\tfrac1{30}+\tfrac1{42}+\tfrac1{66}+\tfrac1{70}+\tfrac1{78}+\tfrac1{105}
  +\tfrac1{110}+\tfrac1{154}+\tfrac1{165}+\tfrac1{195}+\tfrac1{273}+\tfrac1{286},
\]
each denominator a product of three distinct primes (e.g.\ $30=2\cdot3\cdot5$,
$286=2\cdot11\cdot13$).
The $\omega=3$ blocks overlap exactly as the $\omega=2$ blocks do
(Proposition~\ref{prop:overlap}): the increment is
$B_3(N{+}1)-B_3(N)=B_2(N)/p_{N+1}$ (the new triples are pairs joined with
$p_{N+1}$), while every pair $\{i,j\}$ extends to a triple by any third prime
$k\notin\{i,j\}$, each triple arising from three pairs, so
\[
  B_3(N)\ \ge\ \tfrac13\,B_2(N)\Bigl(A_N-\tfrac12-\tfrac13\Bigr)
  \ =\ \tfrac13\,B_2(N)\bigl(A_N-\tfrac56\bigr).
\]
Hence for $N\ge8$ (where $p_{N+1}\ge p_9=23$ and $A_N\ge A_8=1.4554\ldots$),
\[
  \frac{B_3(N{+}1)}{B_3(N)}\ \le\ 1+\frac{3}{p_{N+1}\,(A_N-\tfrac56)}
  \ \le\ 1+\frac{3}{23\cdot 0.62}\ <\ 1.22\ <\ 5 ,
\]
so $B_3(N{+}1)\le5B_3(N)$, consecutive central blocks
$[B_3(N)/6,5B_3(N)/6]$ meet, and the union over $N\ge\max(8,N_b)$ gives
\begin{equation}\label{eq:centralcover}
  \bigl[\,B_3(\max(8,N_b))/6,\ \infty\bigr)\ \subseteq\ \{\,a/b:\ \omega=3\text{ representable}\,\}.
\end{equation}
In particular every $a/b\ge B_3(8)/6=0.04174\ldots$ is covered.  What BEG15 left
open --- ``recovering very small rational numbers'', where ``the structure of
$L_{n+3}(n)$ is not well understood at the ends of the interval'' (\cite{BEG2015})
--- is the \emph{deep} part $a/b<B_3(N_b)/6$.  We settle it by a \emph{lift} to the
unconditional $\omega=2$ theorem.

\subsection{The lift}
\begin{lemma}[lift]\label{lem:lift}
Let $t=a/b>0$ and let $r$ be a prime with $r\nmid b$.  If $v:=t\,r$ is $\omega=2$
representable using a prime set $\Pi$ with $r\notin\Pi$, say
$v=\sum_{(i,j)}1/(q_iq_j)$ with $q_i,q_j\in\Pi$, then
$t=\sum_{(i,j)}1/(q_iq_jr)$ is a sum of distinct sphenic unit fractions.
\end{lemma}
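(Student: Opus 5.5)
The plan is a direct verification: divide the given $\omega=2$ representation of $v$ by $r$ term by term, then check that the new denominators are sphenic and pairwise distinct.

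\emph{Step 1 (the identity).} Start from $v=tr=\sum_{(i,j)}1/(q_iq_j)$, a finite sum over a set $E$ of unordered pairs $\{q_i,q_j\}$ of distinct primes from $\Pi$. The terms are distinct because the representation uses distinct unit fractions, and a semiprime determines its prime pair. Dividing both sides by $r$ gives
\[
t=\frac{v}{r}=\sum_{\{q_i,q_j\}\in E}\frac{1}{q_iq_jr}.
\]

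\emph{Step 2 (each denominator is sphenic).} In each term, $q_i\ne q_j$ holds by semiprimality. Since $q_i,q_j\in\Pi$ and $r\notin\Pi$, the prime $r$ differs from both. Hence $q_iq_jr$ is a product of exactly three distinct primes.

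\emph{Step 3 (the terms are distinct).} By unique factorisation, $q_iq_jr=q_kq_lr$ forces $q_iq_j=q_kq_l$, and therefore $\{q_i,q_j\}=\{q_k,q_l\}$. Distinct pairs in $E$ therefore give distinct sphenic denominators, and $t$ is a sum of distinct sphenic unit fractions.

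The main obstacle is only bookkeeping: keeping track of where the hypotheses enter. The condition $r\notin\Pi$ is what makes the denominators squarefree with three prime factors. The condition $r\nmid b$ is not needed for this algebraic identity. It matters downstream, because it guarantees that $v=ar/b$ still has squarefree denominator $b$, so the $\omega=2$ threshold theorems (Theorems~\ref{thm:above},~\ref{thm:uniform}) can be applied to $v$. That application belongs to the proof of Theorem~\ref{thm:omega3}, not to this lemma.
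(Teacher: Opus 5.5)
Your proof is correct and takes essentially the same approach as the paper: divide the representation of $v$ by $r$ term by term, use $r\notin\Pi$ to get three distinct primes in each denominator, and observe that distinct pairs give distinct denominators. The paper's proof also adds a remark on integrality ($v\cdot\prod\Pi=aP_M/b\in\Z$ once $b\mid P_M$); as you correctly observe, that belongs to the application in Theorem~\ref{thm:omega3}, not to the lemma itself.
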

\begin{proof}
$t=v/r=\sum 1/(q_iq_jr)$.  The three primes $q_i,q_j,r$ are distinct because
$r\notin\Pi$, and distinct pairs give distinct denominators.  Choosing
$\Pi\cup\{r\}\subseteq\{p_1,\dots,p_M\}$ with $b\mid P_M$ makes every term integral
over $b$ (then $v\cdot\!\prod\Pi=a P_M/b\in\Z$ as $\gcd(a,b)=1$).
\end{proof}

Thus $t$ is covered once $v=tr$ lies in the gap-free range of an $\omega=2$ cover
that omits $r$.  Write $\Pi(M,r)=\{p_1,\dots,p_M\}\setminus\{r\}$,
$B_\Pi(M,r)=\sum_{i<j:\,p_i,p_j\in\Pi}1/(p_ip_j)$, and let
$\gamma_{\mathrm{ex}}(M,r)$ be the $\omega=2$ gap-free floor of the cover over
$\Pi(M,r)$ --- the analogue of $\gamma_N$ (\S\ref{sec:A}) with one ruler prime
deleted.  By the mechanism of Theorem~\ref{thm:Aprime} the cover is gap-free on
$[\gamma_{\mathrm{ex}}(M,r)\,P_\Pi,\ \sigma_\Pi-\gamma_{\mathrm{ex}}(M,r)\,P_\Pi]$,
where $P_\Pi=P_M/r$ and $\sigma_\Pi=B_\Pi(M,r)\,P_\Pi$ is the total of the
$\Pi$-atoms, so the lift covers
\[
  t\in\Bigl[\ \tfrac{\gamma_{\mathrm{ex}}(M,r)}{r},\ \tfrac{B_\Pi(M,r)-\gamma_{\mathrm{ex}}(M,r)}{r}\ \Bigr]
  =:\bigl[\,\mathcal{L}(M,r),\ \mathcal{R}(M,r)\,\bigr].
\]
The lower end $\mathcal{L}(M,r)$ shrinks as $r$ grows, and the reach $\mathcal{R}(M,r)\to\infty$ as
$M\to\infty$; once $\gamma_{\mathrm{ex}}(M,r)$ is bounded \emph{uniformly in $M$
and $r$} (Lemma~\ref{lem:Aunif} below), every target $t>0$ is caught by choosing
$r$ large enough that $\mathcal{L}<t$ and then $M$ large enough that $\mathcal{R}\ge t$.  The
order of the two choices is essential: $r$ is fixed \emph{before} $M\to\infty$.
Tying $r$ to $M$ instead --- say $r=p_{M+1}$ --- defeats the argument, since then
$\mathcal{R}=(B_M-\gamma_M)/p_{M+1}\to0$: the numerator grows only like
$\tfrac12(\log\log p_M)^2$ while $p_{M+1}\to\infty$, so that coupling reaches only
ever-smaller targets near $0$, never a fixed $t>0$.

\subsection{Floor-robustness: deleting one ruler prime}
The one genuinely new estimate is that deleting a prime from the $\omega=2$ ruler
raises the floor by a bounded amount, uniformly in the level $M$ and in which prime
is deleted.  Recall the covering cost \eqref{eq:wmin} of \S\ref{sec:A} driving
$\gamma_k\le\gamma_{k-1}+w_k/p_k$.  Deleting ruler prime $p_s$ ($s<k$) gives the
\emph{excluded cost}
\begin{equation}\label{eq:wexcl}
  w_k^{\mathrm{ex}}(s)=\max_{t\in\Z_{p_k}}\ \min\Bigl\{\sum_{i\in T}\tfrac1{p_i}:
  T\subseteq\{1,\dots,k-1\}\setminus\{s\},\ \sum_{i\in T}p_i^{-1}\equiv t\!\!\pmod{p_k}\Bigr\},
\end{equation}
and the recurrence $\gamma_{\mathrm{ex}}(k,r)\le\gamma_{\mathrm{ex}}(k-1,r)
+w_k^{\mathrm{ex}}(s)/p_k$, valid whenever the punctured ruler still covers
$\Z_{p_k}$ (so $w_k^{\mathrm{ex}}<\infty$).

\begin{lemma}[floor-robustness, uniform in $r$ and $M$]\label{lem:Aunif}
For every prime $r$ and every $M\ge11$, $\ \gamma_{\mathrm{ex}}(M,r)\le0.20$.
\end{lemma}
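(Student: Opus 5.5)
We will mirror the proof of Theorem~\ref{thm:uniform}, iterating the excluded floor recurrence
\[
\gamma_{\mathrm{ex}}(k,r)\le\gamma_{\mathrm{ex}}(k-1,r)+w_k^{\mathrm{ex}}(s)/p_k
\]
from a seed level, and split according to the position of the deleted prime $r=p_s$. If $r>p_M$ nothing is deleted and $\gamma_{\mathrm{ex}}(M,r)=\gamma_M\le0.181$ by Theorem~\ref{thm:uniform}. So assume $r=p_s$ with $s\le M$. The recurrence only changes at levels $k>s$; at level $k=s$ the prime $r$ itself is not a ruler prime to be added. The punctured cover over $\Pi(s,r)$ is just the level-$(s-1)$ cover, since $P_\Pi=P_{s-1}$. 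Hence $\gamma_{\mathrm{ex}}(s,r)=\gamma_{s-1}$ when $s-1\ge10$, and for $k>s$ we add $w_k^{\mathrm{ex}}(s)/p_k$ instead of $w_k/p_k$. We also check the side condition $\gamma\le0.45B_\Pi$ of Lemma~\ref{lem:floorrec} at each step. This uses $B_\Pi(k-1,r)\ge B_{k-1}-A_{k-1}/r$, which stays well above $0.45$ once $k\ge11$.

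The key comparison is between the excluded cost and the ordinary one. By Olson (Theorem~\ref{thm:olson}), any $c_0=\lfloor\sqrt{4p_k-3}\rfloor+1$ distinct nonzero residues already cover $\Z_{p_k}$. Deleting one atom from the $c_0+1$ smallest feed atoms $\{P_{k-1}/p_i\}$ therefore still leaves a covering set, so
\[
w_k^{\mathrm{ex}}(s)\le (c_0+1)/p_{k-c_0-1}\le 4/\sqrt{p_k}+O(1/p_k),
\]
uniformly in $s$, whenever $c_0+1\le k-1$. This controls the tail $k>300$ (or $k>k_0$ with $k_0\approx120$) exactly as in Theorem~\ref{thm:uniform}, giving a contribution of about $0.02$. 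For the finite middle range $11\le k\le300$, the excluded costs are computable by the same $O(p_kk)$ residue DP with one atom removed. We will compute
\[
\max_{s<k} w_k^{\mathrm{ex}}(s)/p_k
\]
and sum it. We expect roughly $0.05$–$0.06$, since removing a single atom typically changes the minimal representative only for a few residues, and the large atoms $P_{k-1}/p_s$ with small $s$ are rarely used in minimal representatives anyway.

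The seed is the main obstacle, for two reasons. First, for small $s$ (say $s\le10$) the deletion occurs below the computed base $\gamma_{10}=0.1144$, so we need a new base: the gap-free floor of the punctured ruler $\Pi(M_0,r)$ at some level $M_0$, for each of the ten primes $r\le29$. We would take $M_0=11$, matching the hypothesis $M\ge11$ of the statement. The ten packed-bitset computations over the $\binom{10}{2}=45$ atoms of each $\Pi(11,p_s)$ are the same size as Proposition~\ref{prop:base}. For $s\ge11$ the seed is simply $\gamma_{s-1}\le0.181$ from Theorem~\ref{thm:uniform}. Second, that seed must then be combined with only the increments from $k>s$, so the bound would be
\[
\gamma_{\mathrm{ex}}(M,r)\le\gamma_{10}+\sum_{k=11}^{s-1}\frac{w_k}{p_k}+\sum_{k>s}\frac{w^{\mathrm{ex}}_k(s)}{p_k}.
\]
The difficulty is that the $0.181$ budget leaves only about $0.019$ of slack. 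A naive bound that replaces every $w_k$ by $\max_s w_k^{\mathrm{ex}}(s)$ could overshoot $0.20$. We would try first to certify the inequality $w_k^{\mathrm{ex}}(s)\le w_k+\delta_k$, with $\sum_k\delta_k/p_k\le0.019$, by exact computation on $11\le k\le300$ together with the Olson-based argument beyond. Failing that, we would raise the seed level, paying with larger bitsets, so that the increments start later and the tail sum shrinks.

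For the small-$k$ levels where Olson does not apply ($k\le14$), the punctured ruler's residue-completeness must also be checked directly, as in Fact~\ref{fact:olson-feed}. These finite checks are cheap.
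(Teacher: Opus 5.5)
Your plan follows the paper's proof: exact punctured floors at level $M=11$ for the small deleted primes, the exactly computed worst-case excluded cost $w_k^{\max}=\max_{s<k}w_k^{\mathrm{ex}}(s)$ summed over a finite middle range, and an Olson bound of $4/p_k^{3/2}$ per level beyond that. Your worry that this naive uniform replacement could overshoot is unfounded, so you need neither the $\delta_k$ refinement nor a higher seed. The paper finds
\begin{itemize}
\item $\max_{r\le p_{11}}\gamma_{\mathrm{ex}}(11,r)=0.11602\ldots$ (at $r=19$);
\item $\sum_{k=12}^{220}w_k^{\max}/p_k=0.0498$;
\item $\sum_{k>220}4p_k^{-3/2}\approx0.024$.
\end{itemize}
The total is $0.1899<0.20$. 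The full-ruler $0.181$ is not the right budget to compare against. The increments from $k\ge12$ only have to fit in $0.20-0.11602\approx0.084$, and the worst case uses about $0.074$.

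The paper also avoids your case split in $s$. For $r\ge p_{12}$ it seeds at the exact full-ruler floor $\gamma_{11}=0.0957$; your $\gamma_{10}+w_{11}/p_{11}\approx0.122$ is looser. Deleting a ruler prime can only raise a minimal cost, so $w_k\le w_k^{\max}$ also holds at unpunctured levels. Hence the single bound $0.11602+\sum_{k\ge12}w_k^{\max}/p_k$ covers every $r$.

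The one step that does not go through as written is your side-condition check. The bound $B_\Pi(k-1,r)\ge B_{k-1}-A_{k-1}/r$ gives only $1.0027-1.5657/2\approx0.22$ at $r=2$, $k=12$, not something above $0.45$. So it cannot certify $\gamma\le0.45\,B_\Pi$ for floors near $0.19$. You dropped a $1/r^2$ term. The deleted pairs are $\{r,p_j\}$ with $p_j\ne r$, of total mass $(A_{k-1}-1/r)/r$. This is largest at $r=2$ and gives $B_\Pi(k-1,r)\ge B_{11}-(A_{11}-\tfrac12)/2>0.46$ for every prime $r$ and every $k\ge12$.

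Moreover, the $0.45$ form of the condition in Lemma~\ref{lem:floorrec} was derived from the full-ruler ratio $B_N\le1.06\,B_{N-1}$. The punctured ruler slightly exceeds this ratio ($\approx1.061$ at $r=2$, $k=12$). The paper instead verifies the actual requirement $\gamma_{\mathrm{ex}}(k-1,r)\le B_\Pi(k-1,r)-\tfrac12B_\Pi(k,r)$ directly. It uses $B_\Pi(k-1,r)>0.46$ and the level increment $A_{k-1}/p_k<0.043$, obtaining $0.208>0.19$.

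A smaller point: after one deletion, Olson's hypothesis fails through $k=17$, not $k=14$. This is harmless, since your exact DP over the middle range certifies coverage for every deletion anyway.
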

\begin{proof}
Work at level $M\ge11$ (free, since $b\mid P_{10}\Rightarrow b\mid P_{11}$); this
matters because the $M=10$ small-prime floors are large
($\gamma_{\mathrm{ex}}(10,5)=0.165$) yet all drop by $M=11$.  Bound
$\gamma_{\mathrm{ex}}(M,r)\le\mathrm{base}+\sum_{k=12}^{M}w_k^{\mathrm{ex}}/p_k$
piecewise.

\emph{Base.}  Exactly (packed bitset; Table~\ref{tab:omega3}),
$\max_{r\le p_{11}}\gamma_{\mathrm{ex}}(11,r)$ is attained at $r=19$, where the
exact floor is $\gamma_{\mathrm{ex}}(11,19)=0.11602\ldots$ (the table displays four
decimals; we carry the exact value, not a truncation); for $r\ge p_{12}$ the base
is the full-ruler $\gamma_{11}=0.0957$.  So $\mathrm{base}\le0.11602$.  (The floor
is \emph{non-monotone} in $r$, so the small primes $r=2,3,5,7$ must be --- and are
--- included in this maximum.)

\emph{Exact cost, $12\le k\le220$.}  With
$w_k^{\max}:=\max_{1\le s<k}w_k^{\mathrm{ex}}(s)$ the worst single-prime deletion,
$\sum_{k=12}^{220}w_k^{\max}/p_k=0.0498$ (worst single level $w_kp_k=14.32$ at
$k=12$), and the punctured ruler was verified to cover $\Z_{p_k}$ for \emph{every}
deletion $s<k$ at every such $k$ (zero failures).

\emph{Tail, $k>220$.}  The deletion crossover is $k_0=120$: for $k\ge120$ the
$\lceil\sqrt{4p_k-3}\rceil+1$ largest ruler primes all exceed $p_k/2$, so after one
deletion at least $\sqrt{4p_k-3}$ of them remain $>p_k/2$, each of inverse-weight
$<2/p_k$; by Theorem~\ref{thm:olson} their subset sums already exhaust
$\Z_{p_k}$, whence $w_k^{\mathrm{ex}}\le\sqrt{4p_k-3}\cdot2/p_k<4/\sqrt{p_k}$.  As
$220>k_0$,
\[
  \sum_{k>220}w_k^{\mathrm{ex}}/p_k\ \le\ \sum_{k>220}\frac{4}{p_k^{3/2}}\ =\ 0.024
\]
(a convergent prime sum, evaluated to convergence).  The recurrence's side
condition (the analogue of Lemma~\ref{lem:floorrec}'s) holds at every step: it
requires $\gamma_{\mathrm{ex}}(k-1,r)\le B_\Pi(k-1,r)-B_\Pi(k,r)/2$.  Deleting $r$
removes mass $(A_M-1/r)/r\le(A_M-\tfrac12)/2$ from $B_M$ (the removed mass is
maximal at $r=2$, as its $r$-derivative is negative for $A_M r>2$), so
$B_\Pi(k-1,r)\ge B_{11}-(A_{11}-\tfrac12)/2>1.002-0.533=0.46$ for \emph{every}
prime $r$ and $k\ge12$; and the level increment is
$B_\Pi(k,r)-B_\Pi(k-1,r)\le A_{k-1}/p_k\le A_{11}/p_{12}=1.566/37<0.043$ for all
$k\ge12$ (the quotient $A_{k-1}/p_k$ is largest at $k=12$, since $p_k$ grows
faster than $A_{k-1}$).  Hence
\[
\begin{aligned}
  B_\Pi(k-1,r)-\tfrac12B_\Pi(k,r)
  &=\tfrac12B_\Pi(k-1,r)-\tfrac12\bigl(B_\Pi(k,r)-B_\Pi(k-1,r)\bigr)\\
  &\ge\tfrac{0.46}2-\tfrac{0.043}2=0.208>0.1899\ge\gamma_{\mathrm{ex}}(k-1,r),
\end{aligned}
\]
inductively (using the assembled bound $\gamma_{\mathrm{ex}}\le0.1899$ just below).
Summing,
$\gamma_{\mathrm{ex}}(M,r)\le0.11602+0.04980+0.02407=0.1899<0.20$ for every prime
$r$ and every $M\ge11$ (using the \emph{exact} base $0.11602$ and the converged
tail $\le0.02407$; the round bound $0.20$ leaves ample margin, and all downstream
uses need only $\gamma_{\mathrm{ex}}<0.298$).
\end{proof}

\begin{remark}
The bound is uniform over \emph{which} prime is deleted because it uses the
worst-case cost $w_k^{\max}$, which dominates every single-$r$ value; the actual
per-$r$ floors are smaller ($\gamma_{\mathrm{ex}}(M,r)\le0.160$ for all $r$, worst
at $r=19$; Table~\ref{tab:omega3}).  Only the uniform bound
$\gamma_{\mathrm{ex}}\le0.20$ (assembled value $0.1899$) is load-bearing.
\end{remark}

\subsection{Proof of the theorem}
With Lemma~\ref{lem:Aunif} in hand, no interval-chaining is needed: the lift prime
can simply be chosen \emph{large enough for the target}.

\begin{proof}[Proof of Theorem~\ref{thm:omega3}]
Fix the target $t=a/b>0$ in lowest terms, $b$ squarefree.  Choose a prime $r$ with
\[
  r\nmid b,\qquad r\ge11,\qquad r>0.20/t
\]
(only finitely many primes divide $b$, so such $r$ exists).  Then choose a level
$M\ge\max(N_b,\pi(r),11)$, where $\pi$ is the prime-counting function (so
$r=p_{\pi(r)}$ and $M\ge\pi(r)$ places $r$ among $p_1,\dots,p_M$), large enough that
\[
  B_\Pi(M,r)\ \ge\ t\,r+0.20 ,
\]
which is possible since $B_\Pi(M,r)\to\infty$ as $M\to\infty$ (adding primes adds
positive pair-reciprocals, and $B_\Pi(M,r)\ge B_M-(A_M-1/r)/r\to\infty$).  Put
$v:=t\,r$.  By the choice of $r$,
\[
  \gamma_{\mathrm{ex}}(M,r)\ \le\ 0.20\ <\ v\ \le\ B_\Pi(M,r)-0.20
  \ \le\ B_\Pi(M,r)-\gamma_{\mathrm{ex}}(M,r)
\]
(Lemma~\ref{lem:Aunif} for both ends), so $v$ lies in the gap-free band of the
$\omega=2$ cover over $\Pi=\Pi(M,r)$.  Moreover $v\cdot P_\Pi=(ar/b)(P_M/r)
=aP_M/b\in\Z$, since $b$ is squarefree with all prime factors among
$\{p_1,\dots,p_M\}$ ($M\ge N_b$) and $r\nmid b$.  Hence $v$ is $\omega=2$
representable over $\Pi$, which omits $r$, and Lemma~\ref{lem:lift} turns this
into a sphenic representation $t=\sum1/(q_iq_jr)$.
\end{proof}

\begin{table}[h]
\centering
\small
\begin{tabular}{l|ccccccccccc}
\hline
$r$ & $2$ & $3$ & $5$ & $7$ & $11$ & $13$ & $17$ & $19$ & $23$ & $29$ & $31$\\
$\gamma_{\mathrm{ex}}(11,r)$ & .0866 & .0938 & .1044 & .1086 & .1047 & .1063 & .1111 & \textbf{.1160} & .1088 & .1081 & .1144\\
\hline
\end{tabular}
\caption{Exact base floors at $M=11$ (packed bitset; calibrated against
Theorem~\ref{thm:Aprime}, whose full-ruler run reproduces
$\sum_{13}^{2000}w_k/p_k=0.03388$, $w_{13}p_{13}=9.14$).  Entries are rounded to
four decimals; the maximum, at $r=19$, is exactly
$\gamma_{\mathrm{ex}}(11,19)=0.11602\ldots$.  Assembled uniform bound
$\gamma_{\mathrm{ex}}(M,r)\le0.11602+0.04980+0.02407=0.1899<0.20$ for all primes
$r$, all $M\ge11$ (worst single level $w_kp_k=14.32$ at $k=12$; deletion crossover
$k_0=120$).  Regime-II reach e.g.\ $\mathcal{R}(10,11)=(0.822-0.14605)/11=0.0615$,
$\mathcal{R}(10,13)=(0.841-0.15042)/13=0.0531$.}
\label{tab:omega3}
\end{table}

\subsection{Closure: all $\omega\ge3$}
A single descent upgrades Theorem~\ref{thm:omega3} to the whole hierarchy.  We
position this as a corollary: it is conceptually routine --- each step trades one
$\omega$ for one more prime, and higher $\omega$ only helps (more atoms, larger
margins) --- the sole technicality being that the induction must avoid a growing
finite set of primes, which costs nothing because those primes are ours to choose
(below).  This parallels BEG15's one-line ``similar arguments'' treatment of the
$\omega\ge4$ integer case.

\begin{corollary}[$\omega\ge3$ closure]\label{cor:omegak}
For every integer $k\ge3$, every positive rational $a/b$ with $b$ squarefree is a
finite sum of distinct unit fractions $1/m$, each $m$ squarefree with exactly $k$
distinct prime factors.
\end{corollary}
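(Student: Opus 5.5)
We argue by induction on $k$, with base case $k=3$ given by Theorem~\ref{thm:omega3}, and we strengthen the statement so that the induction can avoid primes. The strengthened claim $(H_k)$ reads: for every finite set $E$ of primes and every $t=a/b>0$ with $b$ squarefree and $\gcd(b,\prod E)=1$, $t$ is a finite sum of distinct unit fractions $1/m$, each $m$ a product of exactly $k$ distinct primes, none of them in $E$. Given $(H_k)$, the step to $k+1$ is a one-prime lift in the style of Lemma~\ref{lem:lift}. Take $t=a/b$ and a forbidden set $E$. Choose any prime $r\notin E$ with $r\nmid b$, and set $v:=tr$. Its denominator is still $b$, which is squarefree and coprime to $\prod(E\cup\{r\})$. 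Applying $(H_k)$ with forbidden set $E\cup\{r\}$ gives $v=\sum 1/m_i$, where the $m_i$ are distinct, each has exactly $k$ prime factors, and none involves $r$ or $E$. Dividing by $r$ gives $t=\sum 1/(rm_i)$. The $rm_i$ are distinct, squarefree with exactly $k+1$ prime factors, and avoid $E$. Taking $E=\emptyset$ yields the corollary.

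The main obstacle is the base case $(H_3)$ with an arbitrary finite forbidden set $E$. Theorem~\ref{thm:omega3} as stated only covers $E=\emptyset$. The cleanest route is to rerun its proof for the prime ruler $\{p_1,\dots,p_M\}\setminus(E\cup\{r\})$. This requires a version of Lemma~\ref{lem:Aunif} with a bounded number of deletions, which Lemma~\ref{lem:Aunif} (one deletion, uniform in $r$ and $M$) does not supply directly. Instead of proving a multi-deletion robustness lemma, I would avoid it as follows. Choose the lift prime $r$ larger than every prime in $E$ and every prime factor of $b$. Note also that $E$-avoidance is only ever needed for the \emph{small} primes.

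Concretely, I would obtain $(H_3)$ differently: apply the $k=3$ step itself with $E$ folded into the target. Write $t=\sum_{i} 1/m_i$ using Theorem~\ref{thm:omega3} on the rescaled target $t\,\prod_{e\in E}e$. Its denominator $b$ is unchanged and coprime to $\prod E$. This produces denominators $\prod E\cdot m_i$, which have too many primes, so the rescaling does not work. Hence I fall back to the honest route: in the proof of Theorem~\ref{thm:omega3}, replace $\Pi(M,r)$ by $\Pi=\{p_1,\dots,p_M\}\setminus(E\cup\{r\})$.

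Assuming the gap-free band over this $\Pi$ has floor bounded by a constant $\gamma_E$ independent of $M$ and $r$, the same choice of $r>\gamma_E/t$ and $M$ large works. To get $\gamma_E$, I iterate the floor recurrence (Lemma~\ref{lem:floorrec}) starting from a level $M_E$ beyond all primes of $E$. The base floor there is simply some finite number, since over the punctured ruler $L^2$ is nonempty and symmetric. More carefully, I take as base any level at which the punctured ruler's central block is gap-free. This block exists by running the Lemma~\ref{lem:block} induction on the punctured ruler, where Olson's theorem still applies for large $q$ because deleting $|E|+1$ atoms changes $c_0$ by a bounded amount. The increments are bounded by $4/\sqrt{p_k}$ once $k$ is large, exactly as in the tail of Lemma~\ref{lem:Aunif}, so their sum converges. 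The constant $\gamma_E$ may depend on $E$, which is harmless since $E$ is fixed before $r$ and $M$ are chosen.

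The hardest point is securing a valid starting level for the punctured ruler without new computation. The induction step itself transfers verbatim, since its only inputs are residue completeness via Olson and the tail inequality, both insensitive to finitely many deletions for large $N$. I would therefore seed it at a sufficiently large $N$ at which the punctured analogue of Proposition~\ref{prop:base} can be justified from the unpunctured block together with the removal of boundedly many atoms.
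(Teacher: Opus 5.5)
Your descent step is correct and is essentially the paper's Lemma~\ref{lem:descent}. The gap is the base case. Your $(H_k)$ quantifies over \emph{every} finite forbidden set $E$, so $(H_3)$ must handle rulers with small primes such as $2,3,5$ deleted, and for these you have no seed for the floor recurrence. Two things fail.
\begin{itemize}
\item The trivial floor from symmetry ($\gamma\le B/2$) violates the condition $\gamma_{N-1}\le B_{N-1}-\tfrac12B_N$ used in Lemma~\ref{lem:floorrec}, because $B$ is strictly increasing. So the recurrence cannot start from it.
\item Deriving a punctured base block ``from the unpunctured block together with the removal of boundedly many atoms'' has no mechanism behind it. Deleting a ruler prime $e$ removes the whole row $\{P_N/(ep_j)\}$ of $D_2(N)$ and changes the normalizing primorial. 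More fundamentally, gap-freeness is not inherited by a subset of the atoms. The identity $L^2_{\mathrm{full}}=L^2_\Pi+P(\text{removed atoms})$ transports intervals from $L^2_\Pi$ to $L^2_{\mathrm{full}}$, never back.
\end{itemize}
Since $E$ ranges over infinitely many sets, no finite computation could seed them all either. So $(H_3)$ as you state it is unproved, and the whole induction rests on it.

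The missing idea is that you never need to avoid \emph{small} primes. The only primes that ever enter the forbidden set are lift primes, and you choose them. The paper therefore restricts $(H_k)$ to forbidden sets $T$ with $\min(T)>p_{K(|T|+k)}$. Here $K(s)\ge220$ is chosen so that for every level $j>K(s)$, the $\lceil\sqrt{4p_j-3}\,\rceil+s+1$ largest ruler primes all exceed $p_j/2$. This gives the following (Lemma~\ref{lem:finiteT}):
\begin{itemize}
\item The punctured ruler coincides with the full one through level $K+1$. It therefore inherits the floor $\gamma_{K+1}\le0.181$ from Theorem~\ref{thm:uniform}, with no new computation.
\item At every later level $j$, at least $\lceil\sqrt{4p_j-3}\,\rceil+1$ surviving ruler primes exceed $p_j/2$. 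Olson then gives level cost $<4/\sqrt{p_j}$.
\item The floor stays below $0.181+\sum_{j>220}4p_j^{-3/2}\le0.205$, uniformly in $N$ and $T$.
\end{itemize}
In both $(H_3)$ and the descent, pick the lift prime $r>p_{K(|T|+k)}$; for $(H_3)$ also require $r>0.205\,b/a$. Since $|T|$ grows by one exactly when $k$ drops by one, $|T|+k$ is invariant and the largeness condition propagates. The empty set $T=\varnothing$ satisfies the condition vacuously, which yields the corollary.
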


We induct on a strengthened statement carrying a finite avoidance set of
\emph{large} primes.  For $t\ge0$ let $K(t)$ be the least $K\ge220$ such that for
every $k>K$, with $m_k:=\lceil\sqrt{4p_k-3}\,\rceil$,
\[
  \text{(i)}\ k-1-m_k\ \ge\ t+1,
  \qquad
  \text{(ii)}\ p_{\,k-m_k-t-1}\ >\ p_k/2 .
\]
$K(t)$ is finite for every $t$ and nondecreasing in $t$: the number of ruler
primes exceeding $p_k/2$ is $k-\pi(p_k/2)$, which by the elementary Chebyshev
bounds grows linearly in $k$, while $m_k+t+1=O(\sqrt{k\log k})+t$ grows slower.
(For $t=1$, condition (ii) is exactly the deletion crossover of
Lemma~\ref{lem:Aunif}, which holds from $k_0=120$ on; the floor $220$ in the
definition is for convenience, aligning with the exactly computed range.)  The induction statement is
\[
  (H_k):\quad\parbox{0.82\textwidth}{for every squarefree-denominator $a/b$ and
  every finite set $T$ of primes with $T\cap\mathrm{primes}(b)=\varnothing$ and
  $\min(T)>p_{K(|T|+k)}$, the rational $a/b$ is a finite sum of distinct
  $\omega=k$ unit fractions whose denominators use no prime in $T$.}
\]
(The avoidance set lets each descent step forbid one more prime; the largeness
condition costs nothing, because every prime ever placed in $T$ is chosen by us,
and it is vacuous for $T=\varnothing$.)  Write $\gamma_2(T)$ for the $\omega=2$
gap-free floor with the primes of $T$ deleted from the ruler, $P_N'$ for the
product of the first $N$ primes not in $T$, and $B_2(N;T)=\sigma^2(N;T)/P_N'$.

\begin{lemma}[finite-set floor-robustness, uniform]\label{lem:finiteT}
Let $T$ be a finite set of primes with $\min(T)>p_{K(|T|)}$.  Then
$\gamma_2(N;T)\le0.181+0.024=0.205$ for all $N>K(|T|)$.  Hence the gap-free band of
$L^2(N;T)$ is nonempty for all such
$N$, and every squarefree-$b$ rational in the corresponding window with
$\mathrm{primes}(b)\cap T=\varnothing$ is $\omega=2$ representable using primes
$\notin T$.
\end{lemma}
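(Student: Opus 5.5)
We follow the floor-recurrence mechanism of Lemma~\ref{lem:floorrec} and Theorem~\ref{thm:uniform}, with the ruler punctured by the finite set $T$. Since $\min(T)>p_{K(|T|)}\ge p_{220}$, every prime of $T$ exceeds $p_{220}$. For levels $N$ up to the index of the smallest element of $T$, the punctured ruler therefore coincides with the full ruler $\{p_1,\dots,p_N\}$. On that initial range we inherit $\gamma_2(N;T)=\gamma_N\le 0.181$ directly from the proof of Theorem~\ref{thm:uniform}, which bounded $\gamma_N$ by $0.1144+0.0475+0.0191$.

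Beyond that point we index the punctured ruler by its own primes $p'_1<p'_2<\cdots$ (the primes not in $T$). We use the recurrence
\[
\gamma_2(N;T)\le\gamma_2(N-1;T)+w'_N/p'_N,
\]
proved exactly as Lemma~\ref{lem:floorrec} with $P_N'$ in place of $P_N$. The cost $w'_N$ is the minimum-weight covering cost of $\Z_{p'_N}$ by the subset sums of the inverses of the earlier ruler primes; that ruler is missing at most $|T|$ primes of the full one. The key estimate is that for $p'_N=p_k$ with $k>K(|T|)$ we have $w'_N\le 4/\sqrt{p_k}$. Condition~(ii) in the definition of $K(t)$, applied with $t=|T|$ deletions, guarantees that after removing the at most $|T|$ primes of $T$ below $p_k$, at least $m_k=\lceil\sqrt{4p_k-3}\,\rceil$ ruler primes in $(p_k/2,p_k)$ remain. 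Condition~(i) ensures these indices are legitimate. Their inverses are distinct nonzero residues mod $p_k$, so Olson (Theorem~\ref{thm:olson}) makes their subset sums cover $\Z_{p_k}$. Each has reciprocal weight $<2/p_k$, so $w'_N\le 2m_k/p_k<4/\sqrt{p_k}$, as in the tail of Lemma~\ref{lem:Aunif}. Summing the increments over the levels where the ruler actually differs from the full one (all indices $k>220$), the extra contribution is at most $\sum_{k>220}4p_k^{-3/2}=0.024$, the converged tail already quoted in Lemma~\ref{lem:Aunif}. Adding this to the $0.181$ inherited bound gives $0.205$.

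We must also check the side condition of the recurrence at every step: $\gamma_2(N-1;T)\le B_2(N-1;T)-\tfrac12B_2(N;T)$. Deleting primes exceeding $p_{220}$ removes only pairs with a large prime. The lost mass is at most $\sum_{r\in T}A/r$, which is small against $B_2$, but it needs an explicit bound. The increment $B_2(N;T)-B_2(N-1;T)\le A/p'_N$ is tiny at these levels. So, as in Lemma~\ref{lem:Aunif}, the right side stays above roughly $0.4>0.205$, and the induction on $N$ closes.

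The nonemptiness of the band then follows because $B_2(N;T)\to\infty$ while the floor stays at most $0.205$. Representability in the window follows from the punctured analogue of $(\star)$: $(a/b)P_N'$ is an integer, since $\mathrm{primes}(b)\cap T=\varnothing$ and $b\mid P_N'$ for $N$ large. It lies in the gap-free band and hence in $L^2(N;T)$.

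The main obstacle is the bookkeeping at levels between $p_{220}$-ish and $K(|T|)$. There the bound on $w'_N$ from condition~(ii) is not yet available, yet $T$ may already have removed primes. I would first try to argue that this cannot happen, because every prime of $T$ exceeds $p_{K(|T|)}$: all levels $\le K(|T|)$ then see the full ruler, and the punctured costs only start at $k>K(|T|)$, where (ii) applies. Making the relabeling $p'_N$ versus $p_k$ consistent, and bounding the $T$-mass loss in the side condition, are the delicate points.
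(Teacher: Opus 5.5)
Your outline matches the paper's proof. The $T$-avoiding system coincides with the full one until the first prime of $T$, which exceeds $p_{K(|T|)}$, so Theorem~\ref{thm:uniform} supplies the $0.181$ there. Beyond that point, conditions (i)--(ii) together with Olson bound the punctured cost by $4/\sqrt{p_k}$ at every level $k>K(|T|)$, and $\sum_{k>220}4p_k^{-3/2}=0.024$ finishes. The ``obstacle'' in your last paragraph is no obstacle, for exactly the reason you give. One small count: by (ii) the $m_k+|T|+1$ largest ruler primes all exceed $p_k/2$, so at least $m_k+1$ survive, not merely $m_k$. That extra one guarantees strictly more than $\sqrt{4p_k-3}$ residues, as Olson's hypothesis requires.

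The genuine gap is the side condition $\gamma_2(N-1;T)\le B_2(N-1;T)-\tfrac12B_2(N;T)$, which you leave unproved. You propose to bound the mass deleted by $T$ by $\sum_{r\in T}A/r$ and call it small against $B_2$, but you give no estimate, and none is immediate. Both $A_N\to\infty$ and $|T|$ is arbitrary, so this route would need control of $\sum_{r\in T}1/r$. That means a quantitative relation between $|T|$ and $\min T$ extracted from the definition of $K(t)$, which you do not supply.

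The paper avoids mass accounting altogether by monotonicity, using a fact you already recorded. Every prime of $T$ exceeds $p_{220}$, so all pairs among $p_1,\dots,p_{220}$ survive. Hence $B_2(N-1;T)\ge B_{220}\ge B_{10}>0.95$ at every level where the punctured recursion is used. Combine this with the increment bound $B_2(N;T)-B_2(N-1;T)\le A_{k-1}/p_k<\tfrac12$, where $p'_N=p_k$. This gives $B_2(N-1;T)-\tfrac12B_2(N;T)=\tfrac12B_2(N-1;T)-\tfrac12\bigl(B_2(N;T)-B_2(N-1;T)\bigr)>0.475-0.25=0.225>0.205$. That closes the induction with no information about what $T$ removes.
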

\begin{proof}
Write $t=|T|$, $K=K(t)$.  \emph{Levels $k\le K+1$ are untouched:} the level-$k$
ruler consists of the first $k-1$ primes, all $\le p_K<\min(T)$, so through level
$K+1$ the $T$-avoiding system \emph{coincides} with the full system and
Theorem~\ref{thm:Aprime} gives $\gamma_2(K+1;T)=\gamma_{K+1}\le0.181$.
\emph{Levels $k>K+1$:} at most $t$ ruler primes are deleted.  Among the
$m_k+t+1$ largest ruler primes at least $m_k+1$ survive the deletion; by (ii) all
of them exceed $p_k/2$, so they are distinct nonzero residues mod $p_k$ of
inverse-weight $<2/p_k$ each, and by Theorem~\ref{thm:olson} their
subset sums exhaust $\Z_{p_k}$.  Hence the covering holds at every level and
$w_k^{\,T}\le m_k\cdot2/p_k<4/\sqrt{p_k}$, exactly as in Lemma~\ref{lem:Aunif}.
The recurrence's side condition holds throughout: inductively
$\gamma_2(k-1;T)\le0.205$, while $B_2(k-1;T)\ge B_{220}\ge B_{10}>0.95$ (the
first $220$ primes are $T$-free) and the level increment is
$B_2(k;T)-B_2(k-1;T)\le A_{k-1}/p_k\le\tfrac{(k-1)/2}{k-1}=\tfrac12$ (crudely,
$A_{k-1}\le(k-1)/2$ and $p_k\ge k-1$), so
\[
  B_2(k-1;T)-\tfrac12B_2(k;T)
  =\tfrac12B_2(k-1;T)-\tfrac12\bigl(B_2(k;T)-B_2(k-1;T)\bigr)
  \ge\tfrac{0.95}2-\tfrac14=0.225>0.205 .
\]  Summing the recurrence from the base $K+1$,
\[
  \gamma_2(N;T)\ \le\ 0.181+\sum_{k>K}\frac{4}{p_k^{3/2}}
  \ \le\ 0.181+\sum_{k>220}\frac{4}{p_k^{3/2}}\ =\ 0.181+0.024\ =\ 0.205 ,
\]
uniformly in $N$ and in $T$.  Since $B_2(N;T)\to\infty$, the band is nonempty for
all $N>K$, and membership of the integer $aP_N'/b$ in the band gives the
representation by the reduction $(\star)$ run over the $T$-free primes.
\end{proof}

\begin{lemma}[$(H_3)$]\label{lem:H3}
$(H_3)$ holds.
\end{lemma}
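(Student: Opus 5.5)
We run the proof of Theorem~\ref{thm:omega3} again, now with an avoidance set $T$. The only new ingredient is that the $\omega=2$ cover used in the lift must avoid $T$ as well as $r$, and Lemma~\ref{lem:finiteT} supplies exactly this. Fix $a/b$ in lowest terms with $b$ squarefree, and a finite set $T$ of primes with $T\cap\mathrm{primes}(b)=\varnothing$ and $\min(T)>p_{K(|T|+3)}$. Put $t=a/b$.

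\emph{Step 1: choose the lift prime.} Choose a prime $r$ with
\[
r\nmid b,\qquad r\notin T,\qquad r>p_{K(|T|+3)},\qquad r>0.205/t .
\]
Such an $r$ exists because only finitely many primes are excluded. Set $T'=T\cup\{r\}$, so $|T'|=|T|+1$. Since $K$ is nondecreasing, $K(|T'|)=K(|T|+1)\le K(|T|+3)$. Both $\min(T)$ and $r$ exceed $p_{K(|T|+3)}\ge p_{K(|T'|)}$, so $\min(T')>p_{K(|T'|)}$ and Lemma~\ref{lem:finiteT} applies to $T'$. Its conclusion is $\gamma_2(N;T')\le 0.205$ for all $N>K(|T'|)$. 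The slack of two in the index $|T|+3$ versus $|T'|=|T|+1$ is harmless; it is what the definition of $(H_k)$ provides at $k=3$.

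\emph{Step 2: choose the level.} Take $N>K(|T'|)$ large enough that three conditions hold: the first $N$ primes outside $T'$ include every prime factor of $b$; $B_2(N;T')\ge tr+0.205$; and the gap-free band is valid. The first condition is possible because $\mathrm{primes}(b)\cap T'=\varnothing$. The second is possible because $B_2(N;T')\to\infty$: deleting finitely many primes removes only a bounded mass $\sum_{s\in T'}A_M/s$, and this grows like $A_M$, far slower than $B_M\sim\frac12 A_M^2$.

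Set $v=tr$. Then
\[
0.205<v\le B_2(N;T')-0.205 ,
\]
so $v$ lies in the band $[\gamma_2,\,B_2-\gamma_2]$. The integrality condition $vP_N'=(ar/b)P_N'\in\mathbb{Z}$ holds because $b\mid P_N'$. Note that $r$ is not a factor of $P_N'$, so the factor $r$ does not cancel. Instead $v$ has denominator dividing $b$, since $\gcd(r,b)=1$ and $r$ sits in the numerator. Lemma~\ref{lem:finiteT} then gives an $\omega=2$ representation $v=\sum 1/(q_iq_j)$ with every $q_i\notin T'$.

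\emph{Step 3: lift.} Apply Lemma~\ref{lem:lift} with $\Pi$ equal to the $T'$-free primes used. This gives $t=\sum 1/(q_iq_jr)$. Each denominator is sphenic: $r\notin\Pi$, so the three primes are distinct, and distinct pairs give distinct denominators. No prime of $T$ occurs, because $q_i,q_j\notin T'\supseteq T$ and $r\notin T$. This proves $(H_3)$.

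\emph{Main obstacle.} The step needing the most care is the hypothesis check of Lemma~\ref{lem:finiteT} for $T'$. The inequality $\min(T')>p_{K(|T'|)}$ requires the new prime $r$ to be large, and this is arranged by choosing $r$ ourselves. One must also confirm that the band from Lemma~\ref{lem:finiteT} is closed at both ends in the way used. The lower end $0.205<v$ is forced by $r>0.205/t$. The upper end is reached by taking $N\to\infty$ with $r$ fixed first, in the same order of choices as in the proof of Theorem~\ref{thm:omega3}.
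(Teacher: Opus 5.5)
Your proposal is correct and follows the paper's proof of Lemma~\ref{lem:H3} essentially step for step: choose the lift prime $r\notin T\cup\mathrm{primes}(b)$ with $r>p_{K(|T|+3)}$ and $r>0.205/t$, apply Lemma~\ref{lem:finiteT} to $T'=T\cup\{r\}$ via the monotonicity of $K$, take the level large enough that $b\mid P_N'$ and $B_2(N;T')\ge tr+0.205$, and then lift with Lemma~\ref{lem:lift}. One harmless wording slip: the mass removed by deleting $T'$ is $O(A_N)$, not bounded, but your comparison with $B_N\sim\tfrac12A_N^2$ is exactly what gives $B_2(N;T')\to\infty$.
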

\begin{proof}
Given the target $t_0=a/b$ in lowest terms and $T$ with
$T\cap\mathrm{primes}(b)=\varnothing$ and $\min(T)>p_{K(|T|+3)}$; write $t=|T|$.
Exactly as in the proof of Theorem~\ref{thm:omega3}, but with
Lemma~\ref{lem:finiteT} in place of Lemma~\ref{lem:Aunif}: choose a prime $r$ with
\[
  r\notin T\cup\mathrm{primes}(b),\qquad r>p_{K(t+3)},\qquad r>0.205/t_0 ,
\]
and set $T':=T\cup\{r\}$, so $|T'|=t+1$ and
$\min(T')>p_{K(t+3)}\ge p_{K(t+1)}$ ($K$ is nondecreasing): Lemma~\ref{lem:finiteT}
applies to $T'$ with floor $\gamma_2(M;T')\le0.205$ at every level $M>K(t+1)$.
Choose $M\ge\max(N_b,\pi(r))$, $M>K(t+1)$, with $B_2(M;T')\ge t_0r+0.205$
(possible as $B_2(M;T')\to\infty$).  Then $v:=t_0r$ satisfies
$\gamma_2(M;T')\le0.205<v\le B_2(M;T')-\gamma_2(M;T')$, and
$v\cdot P_M'=aP_M'r/b\in\Z$ (all prime factors of $b$ lie among the first $M$
primes and outside $T'$).  So $v$ is $\omega=2$ representable using primes
$\notin T'$ --- in particular avoiding both $T$ and $r$ --- and
Lemma~\ref{lem:lift} gives $t_0=\sum1/(q_iq_jr)$ with $\omega=3$ denominators
using no prime in $T$.
\end{proof}

\begin{lemma}[descent $(H_{k-1})\Rightarrow(H_k)$, $k\ge4$]\label{lem:descent}
If $(H_{k-1})$ holds then so does $(H_k)$.
\end{lemma}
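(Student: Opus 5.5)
We mimic the lift of Lemma~\ref{lem:lift} one level up: to represent $t_0=a/b$ with $\omega=k$ denominators avoiding $T$, we pick a fresh prime $r$, represent $v:=t_0r$ with $\omega=k-1$ denominators avoiding $T\cup\{r\}$ via $(H_{k-1})$, and divide every term by $r$. Concretely, given $t_0=a/b$ in lowest terms with $b$ squarefree and $T$ with $T\cap\mathrm{primes}(b)=\varnothing$ and $\min(T)>p_{K(|T|+k)}$, we choose a prime
\[
  r\notin T\cup\mathrm{primes}(b),\qquad r>p_{K(|T|+k)},
\]
and put $T':=T\cup\{r\}$. There is no smallness condition on $r$ relative to $t_0$ here, since $(H_{k-1})$ covers every positive rational and not only a band. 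Unlike in Lemma~\ref{lem:H3}, no floor estimate is needed: the descent is purely formal once the bookkeeping for the avoidance set is correct.

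The first check is that $(H_{k-1})$ applies to $v$ and $T'$. Since $r\nmid b$ and $\gcd(a,b)=1$, the rational $v=ar/b$ is in lowest terms with the same squarefree denominator $b$. We have $\mathrm{primes}(b)\cap T'=\varnothing$ by the choice of $r$. The largeness hypothesis for $(H_{k-1})$ requires
\[
  \min(T')>p_{K(|T'|+k-1)}=p_{K(|T|+k)}.
\]
This holds because $|T'|+(k-1)=|T|+k$, $\min(T)>p_{K(|T|+k)}$ by hypothesis, and $r>p_{K(|T|+k)}$ by choice. This exact matching of indices is the reason the statement $(H_k)$ carries $K(|T|+k)$ rather than $K(|T|)$. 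So $(H_{k-1})$ gives $v=\sum_j 1/m_j$ with distinct $m_j$, each squarefree with exactly $k-1$ prime factors, and none divisible by any prime of $T'$.

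Then $t_0=v/r=\sum_j 1/(m_jr)$. Each $m_jr$ is squarefree with exactly $k$ prime factors, since $r\nmid m_j$ because $r\in T'$. The denominators $m_jr$ are distinct because the $m_j$ are, and they use no prime of $T$ because neither the $m_j$ nor $r$ do. This proves $(H_k)$. Combined with Lemma~\ref{lem:H3} and induction, taking $T=\varnothing$ (for which the largeness condition is vacuous) yields Corollary~\ref{cor:omegak}.

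The only step I expect to be delicate is the index matching in the largeness condition, that is, checking that the sum $|T|+k$ is invariant under $(T,k)\mapsto(T\cup\{r\},k-1)$. This is the reason for the specific normalization in $(H_k)$. Monotonicity of $K$ is needed only in $(H_3)$, not here. One should also confirm that a suitable $r$ exists; it does, because only finitely many primes are excluded and we need only $r$ large.
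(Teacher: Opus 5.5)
Your proposal is correct and matches the paper's proof essentially step for step. Both arguments choose a fresh prime $r>p_{K(|T|+k)}$ with $r\notin T\cup\mathrm{primes}(b)$, check that $(H_{k-1})$ applies to $ar/b$ with forbidden set $T\cup\{r\}$ via the index identity $K\bigl((|T|+1)+(k-1)\bigr)=K(|T|+k)$, and then divide each term $1/m_i$ by $r$.
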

\begin{proof}
Given $a/b$ (lowest terms, $b$ squarefree) and finite $T$ with
$T\cap\mathrm{primes}(b)=\varnothing$ and $\min(T)>p_{K(|T|+k)}$, choose a prime
$r>p_{K(|T|+k)}$ with $r\notin T\cup\mathrm{primes}(b)$ (infinitely many exist).
Put $v:=ar/b$; since $\gcd(ar,b)=1$, $v$ is in lowest terms with squarefree
denominator $b$, so $\mathrm{primes}(v)=\mathrm{primes}(b)$ and
$(T\cup\{r\})\cap\mathrm{primes}(v)=\varnothing$.  Moreover $T\cup\{r\}$ has size
$|T|+1$ and $\min(T\cup\{r\})>p_{K(|T|+k)}=p_{K((|T|+1)+(k-1))}$, so the
hypothesis of $(H_{k-1})$ is met.  By $(H_{k-1})$ applied to $v$ with forbidden
set $T\cup\{r\}$, $v=\sum_i1/m_i$ with each $m_i$ squarefree, $\omega(m_i)=k-1$,
using no prime in $T\cup\{r\}$, the $m_i$ distinct.  Then
$a/b=v/r=\sum_i1/(m_ir)$, and each $m_ir$ is squarefree with $\omega(m_ir)=k$ (as
$r\nmid m_i$), uses no prime in $T$, and the $m_ir$ are distinct.
\end{proof}

\begin{proof}[Proof of Corollary~\ref{cor:omegak}]
Induction on $k$: base $k=3$ is Lemma~\ref{lem:H3}, step $k\ge4$ is
Lemma~\ref{lem:descent}.  Taking $T=\varnothing$ in $(H_k)$ --- for which the
largeness condition is vacuous --- gives the corollary.
\end{proof}

%==================================================================
\section{Scope and open problems}\label{sec:open}

\subsection{Scope}
For squarefree denominator $b$, the results of this paper give the complete picture
of the Erd\H{o}s--Graham problem across $\omega$:
\begin{center}
\begin{tabular}{lll}
\hline
$\omega$ & coverage of $a/b$ ($b$ squarefree) & status \\
\hline
$\ge3$ & \textbf{every} $a/b$ & this paper (Thm~\ref{thm:omega3}, Cor.~\ref{cor:omegak}) \\
$=2$ & every $a/b\ge\min\{B_{N_b}/6,\,\tfrac15\}$ & this paper (Thms~\ref{thm:above},~\ref{thm:uniform}) \\
& $a/b<\min\{B_{N_b}/6,\,\tfrac15\}$ & \textbf{open}: the deep core $(\mathrm{GFF})$ \\
\hline
\end{tabular}
\end{center}
The squarefree hypothesis is intrinsic: a square factor $p^2\mid b$ makes
$b\nmid P_N$, so $(\star)$ cannot place $P_N(a/b)$ as an integer subset sum.  The
$\omega=2$ split is a clean dichotomy of \emph{coverage}, with no gap: the proven
region and the open region tile the whole range.  The threshold is the bottom edge of
our covering construction --- for $N_b\ge10$ it is $B_{N_b}/6$, the bottom-edge onset
of the feed, with $\tfrac15$ its current best \emph{unconditional} value
(Theorem~\ref{thm:uniform}) --- and \emph{not} a proven representability boundary: we
prove no non-representability below it, so we do not claim it is sharp.  By
$(\star)$ the open part is the placement of $P_N(a/b)$ \emph{below} the central
block, in the low range $[\,0,\tfrac16\sigma^2(N)\,)$ of $L^2(N)$ --- the
obstruction BEG15 isolate already for $\omega=3$ (``the structure of $L_{n+3}(n)$
is not well understood at the ends of the interval'', \cite{BEG2015}, p.~8; their
$a(k)$, with $a(2)=23$, marks the block start).  Since the descent
(Corollary~\ref{cor:omegak}) makes $\omega\ge4$ no separate result --- as in
BEG15's one-line ``similar arguments'' treatment of the $\omega\ge4$ integer case
--- the entire remaining content of the problem lives in the single $\omega=2$ band
$a/b<B_{N_b}/6$, where the feed is thinnest.

\subsection{The open core: the gap-free floor conjecture}
Write $\gamma_N=\inf\{\gamma:[\gamma P_N,\,\sigma^2(N)-\gamma P_N]\cap\Z\subseteq L^2(N)\}$
for the true gap-free floor.  (Equivalently $(\mathrm{GFF})$ is an
anti-concentration statement about $D_2(N)$: the semiprime subset sums must spread out
enough to leave no gap of width $\ge\gamma P_N$ in the bulk of $[0,\sigma^2(N)]$.)

\begin{conjecture}[Gap-free floor $(\mathrm{GFF})$, the full close]\label{conj:star}
$\gamma_N\to0$ as $N\to\infty$.
\end{conjecture}

\begin{proposition}
If $(\mathrm{GFF})$ holds then every positive rational $a/b$ ($b$ squarefree) is $\omega=2$
representable: for fixed $a/b$ the level $N$ is unbounded (as $b\mid P_N$ for all
$N\ge N_b$) and $B_N\to\infty$, so $\gamma_N\to0$ yields some $N\ge N_b$ with
$\gamma_N\le a/b\le B_N-\gamma_N$; then $(a/b)P_N$ lies in the gap-free band of
$L^2(N)$ and the reduction \eqref{eq:reduction} gives the representation.  (This
would also remove the lift from \S\ref{sec:omega3}, giving the full rational
$\omega=3$ statement directly.)
\end{proposition}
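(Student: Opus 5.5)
The plan is to reduce to the definition of $\gamma_N$ and the reduction $(\star)$; the proposition has essentially been argued inline, so the proof just makes the quantifiers precise.

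Fix $a/b>0$ in lowest terms with $b$ squarefree, and put $t:=a/b$. First, by Step~2 of Theorem~\ref{thm:above}, $b\mid P_{N_b}\mid P_N$ for every $N\ge N_b$. Hence $tP_N$ is an integer for all such $N$.

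Second, I choose the level $N$. By $(\mathrm{GFF})$ there is $N_1$ with $\gamma_N<t/2$ for all $N\ge N_1$. Since $B_N\to\infty$ (Mertens, \S\ref{sec:prelim}), there is $N_2$ with $B_N\ge t+t/2$ for all $N\ge N_2$. Take $N\ge\max(N_b,N_1,N_2)$. Then
\[
\gamma_N<t\le B_N-\gamma_N .
\]
Multiplying by $P_N$ and using $\sigma^2(N)=B_NP_N$ gives
\[
\gamma_NP_N<tP_N\le\sigma^2(N)-\gamma_NP_N .
\]

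Third, I check that $tP_N$ actually lies in $L^2(N)$. The only subtle point is that $\gamma_N$ is defined as an infimum, so the band at $\gamma_N$ itself might not be gap-free. I avoid this by using the strict inequality: pick $\gamma'$ with $\gamma_N<\gamma'<t$ and $\gamma'$ in the defining set. This is possible because the condition is monotone in $\gamma$: a larger $\gamma$ gives a smaller band, so every $\gamma'>\gamma_N$ qualifies. Then
\[
tP_N\in[\gamma'P_N,\ \sigma^2(N)-\gamma'P_N]\cap\Z\subseteq L^2(N),
\]
where the upper bound holds because $tP_N\le\sigma^2(N)-\gamma_NP_N$ and $\gamma'$ can be taken close enough to $\gamma_N$. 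If that is awkward, shrink the margin by requiring $B_N\ge 2t$ instead, so that the upper bound holds with room to spare. Proposition~\ref{prop:star} then gives the $\omega=2$ representation of $t$ with primes among the first $N$.

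The parenthetical remark about $\omega=3$ follows by the same argument applied to $v=tr$ over the ruler $\Pi(M,r)$. There is no real obstacle; the only care needed is the infimum-versus-attained issue in the definition of $\gamma_N$, which the strict inequality handles.
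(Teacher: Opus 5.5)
Your proof is correct and is the paper's own inline argument (take $N\ge N_b$ with $\gamma_N$ small and $B_N$ large, place $tP_N$ in the gap-free band, apply $(\star)$), made rigorous. Your strict inequalities also repair a boundary slip in the paper's wording: $\gamma_N$ is an unattained infimum, since $\gamma_NP_N$ is precisely the largest hole of $L^2(N)$ below $\sigma^2(N)/2$, so the stated non-strict $\gamma_N\le a/b$ fails at equality. Your constants already give $t<B_N-\gamma_N$ strictly ($B_N-\gamma_N>\tfrac32t-\tfrac12t$), so the fallback $B_N\ge2t$ is unneeded.

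The only loose point is the parenthetical. $(\mathrm{GFF})$ concerns the full ruler and says nothing about $\gamma_{\mathrm{ex}}(M,r)$. Running ``the same argument'' over $\Pi(M,r)$ therefore rests on the unconditional Lemma~\ref{lem:Aunif}, not on $(\mathrm{GFF})$; it is just the proof of Theorem~\ref{thm:omega3}, and it keeps the lift rather than removing it.
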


\appendix
\section{Numerical data for the exact range
\texorpdfstring{$10\le N\le 299$}{10<=N<=299}}\label{app:data}

The induction step requires $Y_0(N)\le\min\{\beta(N),\beta'(N)\}$ for $10\le N\le 299$
(Theorem~\ref{thm:Y0}, exact range).  Since $Y_0(N)$, $\beta(N)$, $\beta'(N)$ are subset
sums of the atoms $P_N/p_i$ and hence have hundreds of digits, we tabulate the
\emph{normalized} quantities obtained by dividing by $P_N$.  Thus
$y_0(N):=Y_0(N)/P_N$ is the normalized onset radius (a sum of reciprocals of
primes), and the normalized strip length is $A_N/6$, since
$\beta(N)/P_N$ and $\beta'(N)/P_N$ both lie within $q/P_N<10^{-7}$ of $A_N/6$ by
Lemmas~\ref{lem:Bbound}--\ref{lem:Bprime}.  The last column is the margin
$\mu(N):=A_N/6-y_0(N)$, which therefore equals $\min\{\beta(N),\beta'(N)\}/P_N-Y_0(N)/P_N$
up to less than $10^{-7}$.

Across the full range $10\le N\le 299$ (all $290$ values) the margin is positive;
it is \emph{smallest at the seed} $N=10$, where $\mu(10)=0.02372$, and stays
$\ge 0.02372>0$ throughout.  The growth is not strictly monotone --- there are a
few local dips (e.g.\ $\mu$ falls from $0.15900$ at $N=15$ to $0.15759$ at
$N=16$) --- but the minimum is attained only at the seed (consistent with
Remark~\ref{rem:base9}: at the omitted value $N=9$ one has $Y_0(9)>\beta(9)$, i.e.\
negative margin).  A representative sample follows.

\begin{center}
\small
\begin{tabular}{rrccc}
\hline
$N$ & $q=p_{N+1}$ & $y_0(N)=Y_0(N)/P_N$ & $A_N/6$ & $\mu(N)$ \\
\hline
$10$  & $31$   & $0.23186$ & $0.25557$ & $0.02372$ \\
$11$  & $37$   & $0.23186$ & $0.26095$ & $0.02909$ \\
$12$  & $41$   & $0.22286$ & $0.26545$ & $0.04259$ \\
$13$  & $43$   & $0.18716$ & $0.26952$ & $0.08236$ \\
$14$  & $47$   & $0.15160$ & $0.27339$ & $0.12180$ \\
$15$  & $53$   & $0.11794$ & $0.27694$ & $0.15900$ \\
$16$  & $59$   & $0.12249$ & $0.28009$ & $0.15759$ \\
$17$  & $61$   & $0.10095$ & $0.28291$ & $0.18196$ \\
$18$  & $67$   & $0.10118$ & $0.28564$ & $0.18446$ \\
$19$  & $71$   & $0.08368$ & $0.28813$ & $0.20445$ \\
$20$  & $73$   & $0.07901$ & $0.29048$ & $0.21147$ \\
$22$  & $83$   & $0.06483$ & $0.29487$ & $0.23004$ \\
$25$  & $101$  & $0.05198$ & $0.30047$ & $0.24849$ \\
$30$  & $127$  & $0.04314$ & $0.30830$ & $0.26516$ \\
$40$  & $179$  & $0.02685$ & $0.31957$ & $0.29271$ \\
$50$  & $233$  & $0.01838$ & $0.32784$ & $0.30946$ \\
$75$  & $383$  & $0.00985$ & $0.34185$ & $0.33199$ \\
$100$ & $547$  & $0.00686$ & $0.35106$ & $0.34419$ \\
$150$ & $877$  & $0.00409$ & $0.36314$ & $0.35905$ \\
$200$ & $1229$ & $0.00279$ & $0.37118$ & $0.36839$ \\
$250$ & $1597$ & $0.00209$ & $0.37712$ & $0.37503$ \\
$299$ & $1987$ & $0.00167$ & $0.38174$ & $0.38007$ \\
\hline
\end{tabular}
\end{center}

\noindent
(The exact integers underlying the normalized values are reproducible by the
$\Z_q$ shortest-path computation of \S\ref{ssec:tail}; for example
$Y_0(10)=1{,}500{,}040{,}080$, $\beta(10)=1{,}653{,}479{,}725$, with
$P_{10}=6{,}469{,}693{,}230$ recovering $y_0(10)=0.23186$ and $A_{10}/6=0.25557$.)

\section{Reproducibility scripts}\label{app:scripts}
The finite computations of \S\ref{sec:comp} are reproduced by three short,
self-contained Python scripts (standard library only; exact integer and
\texttt{Fraction} arithmetic), included as ancillary files:
\begin{center}
\begin{tabular}{@{}ll@{}}
\toprule
script & reproduces \\
\midrule
\texttt{blocks.py} & $\sigma^2(N)$, $B_N$, $B_3(N)$, $B_\Pi(M,r)$, the floor $\gamma_N$, central-block gap-freeness; \\
 & the deletion floor $\gamma_{\mathrm{ex}}(M,r)$; the $47$-pair residual sliver \\
\texttt{onset.py} & $y_0(N)=Y_0(N)/P_N$ vs $A_N/6$ and the exact integers $Y_0(N),\beta(N),\beta'(N)$; \\
 & feed residue-completeness; the sums $\sum_{11}^{K}w_k/p_k$, $\sum_{12}^{K}w_k^{\max}/p_k$ \\
\texttt{olson\_tail.py} & the crossover $k_0=120$ and the tails $\sum_{k>K_0}4/p_k^{3/2}$ \\
\bottomrule
\end{tabular}
\end{center}
The value-side reachability (\texttt{blocks.py}) is a packed-bitset subset sum; the
residue-side maps (\texttt{onset.py}, \texttt{olson\_tail.py}) are polynomial
$\Z_q$ knapsacks.  The residue-side runs, the sliver enumeration, and the
symmetric-function constants are light (seconds); the only heavy steps are the
value-side bitsets --- the $N=10$ base case ($\approx771$\,MB, $\approx25$\,s) and
the $N=11$ deletion floors $\gamma_{\mathrm{ex}}(11,r)$ of the $\omega=3$ base
($\approx5$\,GB, $\approx25$\,s each); for $N\le9$, \texttt{blocks.py} validates the
method in seconds.  The following commands reproduce every load-bearing number:
\begin{verbatim}
$ python3 blocks.py 10          # integer base case (~771 MB)
  N=10  B_N=0.95321  gamma_N=0.11439  central block [1027831462,5139157307]
        gap-free: True
$ python3 blocks.py sliver      # the residual rational sliver
  == residual sliver (Thm 7.1, Step 4b): 47 pairs ==
$ python3 blocks.py omega3      # omega=3 symmetric-function constants
  B_3(8)/6 = 0.04174   B_Pi(10,drop 11) = 0.822   B_Pi(10,drop 13) = 0.841
$ python3 blocks.py ex 11 8     # omega=3 deletion floor (~5 GB)
  gamma_ex(11, p_8=19) = 0.11602
$ python3 onset.py 300          # onset, residue-completeness, w_k sums
  N=10  y0=0.23186  A_N/6=0.25557                      (onset radius)
  Y_0(10)=1500040080  beta(10)=1653479725  Y_0<=min: True
  feed residue-completeness, q in {31,37,41,43}: True  (Olson-fails levels)
  sum_11^300 w_k/p_k     = 0.0475                       (uniform threshold)
  sum_12^220 w_k^max/p_k = 0.0498   worst w_k*p_k=14.32 @ k=12   (omega=3)
$ python3 olson_tail.py         # crossover and analytic tails
  k_0 = 120  (p_120 = 659)
  sum_{k>300} 4/p_k^{3/2} -> 0.0191   sum_{k>220} -> 0.0239
\end{verbatim}

\section{Machine-checked formalisation in Lean 4 / Mathlib}\label{app:lean}

Beyond reproducing the finite computations of Appendix~\ref{app:scripts}, the entire
logical development of this paper --- the integer $\omega=2$ theorem
(Theorem~\ref{thm:main}), the rational $\omega=2$ theorems above the threshold
(Theorems~\ref{thm:above} and~\ref{thm:uniform}), and the rational $\omega\ge3$ tower
(\S\ref{sec:omega3}) --- has been formalised and machine-checked in Lean~4 against
Mathlib, with no \texttt{sorry}.  The companion proves the same statements --- the
theorems and their hypotheses are unchanged --- though it occasionally reorganises the
argument, and the exact threshold at which an analytic tail takes over from a finite
computation, to suit in-kernel evaluation.  The companion ($23$ modules; included as an
ancillary file \texttt{anc/lean/}) is type-checked by \texttt{lake build}, and
\texttt{lake env lean Results.lean} prints the complete axiom dependency of the headline
theorems.

\paragraph{Axiom surface.}
The development reduces every theorem to exactly \emph{two} named axioms, \emph{both
cited classical results, neither original to this paper}:
\begin{center}
\begin{tabular}{@{}lll@{}}
\toprule
Lean axiom & statement & reference \\
\midrule
\texttt{olson}     & Olson's addition theorem (in $\Z_q$ form) & \cite{Olson1968} \\
\texttt{rs\_lower} & $k\lfloor\log_2 k\rfloor\le 2p_k$ for $k\ge6$ (a weakening of $p_n>n\ln n$) & \cite{RosserSchoenfeld1962} \\
\bottomrule
\end{tabular}
\end{center}
Everything else is \emph{proved}: Chebyshev's bound $4p_N<N^2+3$, the divergence
$B_N\to\infty$ (from Mertens), the central-block induction, the floor band and its
level/band selection, the deleted-ruler ($\omega\ge3$) layer in full, and the reduction
$(\star)$.  Notably, even the published $\omega=3$ lemma of \cite{BEG2015} is \emph{not}
assumed: the $\omega\ge3$ tower is derived from the $\omega=2$ result through the lift,
so the companion imports no result from \cite{BEG2015} as an axiom.  In particular, the
three families of finite computations that \S\ref{sec:comp}
performs externally are all discharged \emph{inside the Lean kernel} by
\texttt{native\_decide} (the compiled evaluator, whose Boolean output the kernel re-checks
through \texttt{ofReduceBool}); no computational claim is taken on faith.

\paragraph{Status.}
The build is green with $0$ \texttt{sorry} and rests on \texttt{olson} and
\texttt{rs\_lower}, together with the Lean/Mathlib foundations (\texttt{propext},
\texttt{Classical.choice}, \texttt{Quot.sound}) and the \texttt{native\_decide} trust base
(\texttt{ofReduceBool}, \texttt{trustCompiler}).  The formalisation thus certifies not
merely the finite leaves of \S\ref{sec:comp} but the entire deductive structure of the
paper, down to two classical theorems.

A full re-check is \texttt{lake build} (after \texttt{lake exe cache get} fetches the
pinned Mathlib binaries; the exact toolchain and Mathlib revision are recorded in the
companion).  Because several leaves are verified by the kernel re-running compiled
computations, the check is not instantaneous: it takes on the order of ten minutes and a
few gigabytes of memory on a current machine, the bulk being the handful of in-kernel
\texttt{native\_decide} evaluations.  Each axiom dependency can be inspected with
\texttt{\#print axioms} (see \texttt{Results.lean}, \texttt{Check.lean}).

\section*{Use of AI}
This work was carried out as a human--AI collaboration. The author directed the
mathematics and is responsible for all content. AI assistants---principally
Anthropic's Claude, used through Claude Code, together with other frontier models
used for exploratory discussion---contributed substantially to the Lean~4 / Mathlib
formalisation, to the Python verification scripts, and to parts of the exposition.
Precisely because the development was AI-assisted, every theorem is machine-checked
in Lean with no \texttt{sorry}, resting on the two cited classical axioms (Olson's
addition theorem and a Rosser-type prime bound) together with the standard
Lean/Mathlib foundations and the \texttt{native\_decide} compiler-trust base for the
finite computations, exactly as enumerated in Appendix~\ref{app:lean}; correctness is
therefore independent of the tools and can be reproduced by running \texttt{lake build}
on the included sources.

\section*{Acknowledgements}
This paper builds directly on the method of Butler, Erd\H{o}s and Graham, whose
induction skeleton --- the splitting recursion, the central-block propagation, and
the Olson-controlled feed --- it adapts throughout; the debt to their work is
pervasive.

\end{document}